\documentclass[a4paper,10pt]{article}
\usepackage{mathtools,amsfonts,amssymb,amsthm,mathrsfs}
\usepackage{lmodern}
\usepackage{graphicx}
\usepackage{subcaption}
\usepackage{tikz}
\usepackage{pstricks}
\usepackage{tabularx}
\usepackage[margin=3cm]{geometry}
\usepackage{enumitem}
\usepackage[english]{babel}
\usepackage[T1]{fontenc}
\usepackage[utf8]{inputenc}
\usepackage[notref,notcite]{showkeys} % Package to see the keys of labelling
\usepackage{comment}
\usepackage{authblk} % Package to include several authors
\usepackage[normalem]{ulem}
\usepackage{dsfont} % Package for the characteristic function

\usepackage{chngcntr} % Package to reset the equation numbering at each subsection

\nocite{*}

\counterwithin*{equation}{section}

\theoremstyle{plain}

\newtheorem{theorem}{Theorem}[section]
\newtheorem{lemma}[theorem]{Lemma}
\newtheorem{proposition}[theorem]{Proposition}
\newtheorem{corollary}[theorem]{Corollary}

\theoremstyle{definition}

\newtheorem{assumption}{Assumption}[section]

\usepackage{hyperref}
\hypersetup{colorlinks,%
            citecolor=red,%
            filecolor=yellow,%
            linkcolor=black,%
            urlcolor=black}

\newcommand{\ovbeta}{\overline{\beta}}
\newcommand{\unmu}{\underline{\mu}}

\newcommand \ph {\varphi}

\newcommand \N {\mathbb N}

\newcommand \R {\mathbb R}

\renewcommand{\L}{\mathrm{L}} % For the L^p spaces
\newcommand{\W}{\mathrm{W}}

\renewcommand \AA {\mathcal{A}} % La commande \AA existe deja de base

\newcommand \LL {\mathcal{L}}  % La commande \L existe deja de base

\newcommand{\inte}{\operatorname{int}}

\newcommand{\1}{\mathds{1}}

\DeclareMathOperator{\esup}{ess\,sup}
\DeclareMathOperator{\supp}{supp}
\newcommand{\dd}{\mathrm{d}}
\newcommand{\df}[2]{\frac{\dd#1}{\dd#2}}
\renewcommand{\Re}{\mathrm{Re}}
\newcommand{\vcol}[1]{\left( \begin{smallmatrix} #1 \end{smallmatrix} \right)}

\title{Sharp threshold dynamics for a bistable age-structured population model}
\author[1]{Quentin Griette}
\author[1,2]{Franco Herrera\footnote{Corresponding author}}
\affil[1]{ LMAH, Université Le Havre Normandie, 76600, Le Havre, France; \href{mailto:quentin.griette@univ-lehavre.fr}{quentin.griette@univ-lehavre.fr}; \href{mailto:franco.herrera-granda@etu.univ-lehavre.fr}{franco.herrera-granda@etu.univ-lehavre.fr}}
\affil[2]{Instituto de Matemáticas, Universidad de Talca, Talca, Chile; \href{mailto:franco.herrera@utalca.cl}{franco.herrera@utalca.cl}}

\begin{document}

\maketitle

\begin{abstract}
This paper is devoted to the long-term dynamics of solutions to the Gurtin-MacCamy population model with a bistable birth function. We consider a one-parameter monotone family of initial distributions for the population such that for small values of the parameter, the corresponding population density gets extinct as time passes, whereas for large values of them, the solutions exhibit a different behavior. We are interested in the intermediate set of values for the parameters, which are called threshold parameters. We prove the existence of a sharp transition between these two asymptotic dynamics; that is, there exists exactly one threshold value when the age-dependent birth rate of the population has compact support, utilizing the theory of monotone dynamical systems. The case when the birth rate is non-compactly supported is more intricate to deal with, as has been observed in several works, even if the nonlinear birth function is monostable. Nevertheless, the approach used in the present work turns out to be effective to handle a particular birth rate with noncompact support by translating the dynamics of the age-structured model into an integro-differential system.

\vspace{0.2in}\noindent \textbf{Key words}. Age-structured model, sharp threshold transition, bistable birth function, delay-differential equation, monotone dynamical system.

\vspace{0.1in}\noindent \textbf{2020 Mathematical Subject Classification:} Primary: 35B40; Secondary: 35B41, 37N25, 45D05, 45M05.

\end{abstract}

\section{Introduction}\label{sec:intro}
This paper is devoted to the asymptotic behavior of the solutions to the following age-structured population model, which is usually referred to in the literature under the name of Gurtin-MacCamy model \cite{MR354068}:
\begin{equation}\label{GM-model}
    \begin{dcases}
    \partial_t u(t,a) + \partial_a u(t,a) = -\mu(a) u(t,a), & t>0,~a>0, \\
    u(t,0) = f\left( \int_0^{+\infty} \beta(a) u(t,a) \dd a \right), & t>0, \\
    u(0,\cdot) = u_0 \in \L^1_+(0,+\infty).
    \end{dcases}
\end{equation}
The differential equation appearing in \eqref{GM-model} is known as the McKendrick-von Foerster equation, and it is widely used in different areas of mathematical biology, such as demography, cell proliferation, and epidemiology \cite{MR3988617,MR2748390,MR3616174,MR4476225,MR4557625,MR2017740,Ruiz}. In this model, the function $u(t,a)$ denotes the density of a population at time $t$ of age $a$, $\mu$ is the age-specific mortality rate, $\beta$ is the age-specific fertility rate, $f$ is a non-negative birth function such that $f(0)=0$, and $u_0$ represents the initial distribution of the population. Including the function $f$ in the model is a way to consider the {\it effective} offspring produced by the individuals in order to study situations where the size of the population may have a positive or negative effect on the system, for instance, including a crowding effect or saturation for the environment. Its name is coined in honor of the pioneering work \cite{MR354068} proposing a nonlinear age-structured model to describe the time evolution of a theoretical population.

Various aspects of this model have been extensively studied, with a major focus on the existence and stability of the steady states. Mainly, the goal of those studies has been to establish general conditions assuring the existence of a unique nontrivial steady state and also to provide a criterion for its local or global stability. The use of functional analytic tools \cite{MR3616174,MR772205} in companion with the development of semigroups, integrated semigroups \cite{MR3887640}, and the theory of functional differential equations \cite{MR1345150} has provided an effective framework to face such age-dependent models.

Equation \eqref{GM-model} with the choice of a Ricker's-type nonlinearity $f(x) = \alpha xe^{-x}$, with $\alpha>0$, was studied by Magal and Ma \cite{MR4683856}, where they showed the global asymptotic stability of the unique nontrivial equilibrium whenever $\alpha\in (1,e^2)$ and proved complementarily that a Hopf bifurcation may occurs at $\alpha = \alpha^\star > e^2$. Herrera and Trofimchuk \cite{MR4750985} worked on an integral equation arising naturally from this model (see Section \ref{sec:prelim}), proving the global asymptotic stability of the unique nontrivial equilibrium under a quite generic monostable shape for the nonlinearity $f$, namely a unimodal one. It is also possible to include spatial structure in the model by adding a diffusion term. Ruiz-Herrera and Touaoula \cite{Ruiz} consider this situation and make a link between the age-structured model and the one-dimensional dynamical system generated by $f$, from where some properties, such as the global attraction of the steady states, can be deduced from the properties of the birth function. On the other hand, Kang and Ruan \cite{MR4476225} studied the principal spectral theory of an age-structured model with nonlocal diffusion for the case where $f$ is just the identity function. They stated some conditions guaranteeing the existence of the principal eigenvalue and ensuring that the semigroup generated by the solutions to the equation  exhibits asynchronous exponential growth. Later, in a series of two works, Ducrot et al. \cite{MR4905314,MR4905582} studied the same model with monotone nonlinearity on the birth function and including a nonlocal diffusion term. They provide criteria for the existence of principal eigenvalues and study their effects on the global dynamics of the model via the sign of the spectral bound of some linearized operator. See also \cite{MR4984351} for a study in a spatially periodic media.

Each of the aforementioned works deals, in the end, with a monostable situation for the purpose of describing the global dynamics of the model. In other words, the results concerning the asymptotic behavior of the nonzero solutions of \eqref{GM-model} are given when there exists a unique stationary solution. The study of the monostable case is usually addressed using the principal eigenvalue, generalized principal eigenvalue, and/or the spectral radius to define the so-called basic reproduction number and to determine a threshold value. This threshold value serves as a bifurcation point at which the trivial equilibrium loses stability, giving rise to a unique non-trivial equilibrium, generally locally stable. Thus, this threshold value is used to study for which range of parameter values the solutions of the model tend either to the trivial equilibrium or to the unique non-trivial equilibrium, as well as local or global stability results \cite{MR968836,MR4931087,MR4750985,MR4557625,MR2017740,MR4392478}.

In the present work, we wish to pay some attention to the bistable scenario. That is, we are interested in the dynamics of the solutions when the function $f$ has exactly three fixed points, one trivial and two nontrivial, with the intermediate one being unstable (see Figure \ref{fig-1}), and therefore the Gurtin-MacCamy model admits exactly three steady states, namely
\[
    \overline{\varphi}_0 = 0, \quad \overline{\varphi}_1(a) = \kappa_1 e^{-\int_0^a \mu(l) \dd l}, \quad \text{and}\quad \overline{\varphi}_2(a) = \kappa_2 e^{-\int_0^a \mu(l) \dd l}.
\]
Under these conditions, the model adopts a feature characteristic of the Allee effect observed in several biological populations and structured models \cite{MR1310123,MR2748390,MR4952628,MR2320699}. That is, the existence of a critical value such that the population cannot persist in time provided the initial population is below it. To give a biological interpretation of this case, we may identify separately the offspring of the individuals and the effective newborns. For instance, the loggerhead sea turtle ({\it Caretta caretta}) is a species that lays its eggs in the sand, and once these hatch, the loggerhead hatchlings have to reach the sea. During this vulnerable journey, plenty of predators stalk the newborns. It has been observed that if the number of hatchlings per emergence is small, then the survival chance is reduced, while in the opposite case the chance of surviving increases due to a predator-swamping effect \cite{erb2019nest}. Thus, the assumption of $f$ being bistable has an interesting biological meaning.

Assuming that $f$ has a bistable structure as described above, then we may expect two typical asymptotic behaviors for the solutions, namely the convergence towards one of the stable equilibria. In the case of reaction-diffusion equations with this type of reaction term, this question has been addressed previously for several classes of initial data; see \cite{MR2608941,MR2754337,MR2169048}. Substantially, the results in these works are concerned with a sharp transition in the behavior of a monotone family of solutions. In order to set up the ideas in our context, consider a family of nonnegative initial distributions $\{ u_\lambda \}_{\lambda \geq0}$ with $u_0 \equiv 0$ and such that the map $\lambda \mapsto u_\lambda$ is increasing and continuous in the $\L^1$-norm. We denote by $u^\lambda$ the solution to the equation \eqref{GM-model}. Due to the asymptotic stability of the trivial equilibrium, it is clear that for small values of $\lambda$, the solution verifies $u^\lambda(t,\cdot) \to 0$ as $t\to \infty$ in $\L^1(0,+\infty)$. In the same fashion, if for large values of $\lambda$, $u_\lambda$ becomes sufficiently large to keep above the nontrivial stable equilibrium (see Corollary \ref{cor:conv}), then $u^\lambda(t,\cdot) \to \overline{\varphi}_2$. Accordingly, one may be interested in the so-called threshold solutions, i.e., solutions that do not exhibit any of the aforementioned behavior, and determine whether there is no one, exactly one, or a continuum of them. From the viewpoint of applications, the existence of exactly one threshold value will indicate a sharp transition between extinction and stabilization, and any intermediate behavior will be very exceptional. On the other hand, from the mathematical viewpoint, the description of the threshold solutions is also a fascinating problem and usually way harder. Mainly, the instability of the intermediate equilibrium may produce a positive feedback cycle that gives rise to oscillatory behaviors.

In the hope of understanding how the threshold solutions behave for the age-structured model, we may establish a connection with the theory of delay differential equations. Let the fertility rate be a step function, $\beta(a) = \beta \1_{a\geq \tau}$ with $\beta>0$ and $\tau>0$, and for the sake of simplicity, let $\mu(a) \equiv \mu>0$ be a constant. Integrating the differential equation in \eqref{GM-model} with respect to $a$ we find
\[
    \partial_t \int_0^{+\infty} u(t,a) \dd a = f\left( \beta \int_\tau^{+\infty} u(t,a) \dd a \right) - \mu \int_0^{+\infty} u(t,a) \dd a.
\]
Alternatively, integrating the same equation along the characteristics in the interval $[t-\tau, t]$ yields, for $t\geq \tau$ that
\[
    u(t,a) = \begin{cases}
        e^{-\mu \tau} u(t-\tau, a-\tau), & \text{if}~ a\geq \tau, \\
        e^{-\mu a} f\left( \beta \int_\tau^{+\infty} u(t-a, \sigma) \dd \sigma \right), & \text{if}~ a\leq \tau,
    \end{cases}
\]
so by changing variables we have
\[
    \int_\tau^{+\infty} u(t,a) \dd a = e^{-\mu \tau} \int_0^{+\infty} u(t-\tau,a) \dd a.
\]
Therefore, letting $U(t) := \int_0^{+\infty} u(t,a) \dd a$ we find that the system \eqref{GM-model} can be rewritten as a delay differential equation in $U$
\begin{equation}\label{eq:delay}
    U'(t) = f(\beta e^{-\mu \tau} U(t-\tau)) - \mu U(t), \quad t>\tau,
\end{equation}
equipped with an initial data $U(t)= U_0(t)$ for $t\in [0,\tau]$. Equation \eqref{eq:delay} is also known in the literature as a Mackey-Glass type equation, and the structure imposed over $f$ gives a positive feedback condition on the equation. Here we would like to refer to the remarkable reference \cite{MR1719128}, where the authors give a quite general description of the dynamics for this equation, which actually has a whole branch of applications in neural network theory \cite{MR1834537}. In the cited reference, jointly with \cite{MR1822211}, the authors proved that the global attractor of the system can be visualized as a 3-dimensional solid spindle with the stable equilibria as tips, and it is split by an invariant disk into the basins of attraction toward each of these tips. These works motivate various ideas used in the present one.

\subsection{Assumptions and main results}

We will use the following notation all over the work for the set of non-negative real numbers: $\R_+ = [0,+\infty)$. Moreover, if $X$ denotes a Banach space and $X_+$ is a positive cone of $X$, we will use the following notation for the different order relations induced by $X_+$ on the elements of $X$. Let $x,y\in X$ and define the partial order relation on $X$ as follows
\begin{align*}
    x \leq y &\iff y- x \in X_+, \\
    x < y &\iff x\leq y ~\text{and}~ x\neq y, \\
    x \ll y &\iff y-x \in \inte X_+,
\end{align*}
where $\inte X_+$ denotes the interior of $X_+$. It is similarly defined $x\geq y$, $x>y$ and $x\gg y$.  In particular, the space of continuous functions over a compact interval is endowed with the order given by the cone of positive continuous functions.

This study is devoted to the bistable case for the nonlinearity $f$ in \eqref{GM-model}. Mainly, we will assume that the following statements hold throughout the paper.

\begin{assumption}\label{as-1}
The following conditions over the parameters in \eqref{GM-model} hold:
\begin{enumerate}[label = (\roman*)]
    \item $\beta,\mu \in \L^\infty_+ (0,+\infty)$ and are normalized in the following sense
    \begin{equation}\label{eq:norm}
        \int_0^{+\infty} \beta(a) e^{-\int_0^a \mu(l)\dd l}\dd a = 1.
    \end{equation}
    \item There exists $\underline{\mu}>0$ such that $\mu(a) \geq \underline{\mu}$ for almost every $a\geq 0$.
    \item $f\colon \R_+ \to \R_+$ is a $C^1$-function with bounded derivative such that $f'(x)>0$ for every $x\in \R_+$. It possesses exactly three equilibria, i.e., the equation $f(x)=x$ has the solution set $\{0,\kappa_1,\kappa_2\}$. Also, the relations $f'(0),f'(\kappa_2)<1$ and $f'(\kappa_1)>1$ are valid. See Figure \ref{fig-1}.
    \item Moreover, $f$ verifies
    \begin{equation}\label{diag_cond}
        \limsup_{x\to \infty} \frac{f(x)}{x} < 1.
    \end{equation}
\end{enumerate}
\end{assumption}

\begin{figure}[t]
    \centering
    \begin{tikzpicture}[scale=1, line width=0.7pt]
    \draw[->] (0,0) -- (4.5,0) node [anchor=west] {$x$};
    \draw[->] (0,0) -- (0,4.5) node [anchor=south] {$y$};
    \draw (0,0) -- (4.4,4.4) node [anchor=south] {$y=x$};
    \draw (0,0) .. controls (0.5,0.1) and (1,0.4) .. (1.5,1.5) .. controls (2,2.6) and (2.5,2.9) .. (3,3) .. controls (3.5,3.1) and (4,3.2) .. (4.3,3.3) node [anchor=west] {$y=f(x)$};
    \draw[dashed] (1.5,1.5) -- (1.5,0) node [anchor=north] {$\kappa_1$};
    \draw[dashed] (3,3) -- (3,0) node [anchor=north] {$\kappa_2$};
    \end{tikzpicture}
    \caption{Graph of the nonlinearity $f\colon \R_+ \to \R_+$.}
    \label{fig-1}
\end{figure}

Hereinafter we set $\ovbeta$ to be some positive number such that $\beta(a) \leq \ovbeta$ for almost every $a\geq 0$. Under this assumption, we have the following description about the steady states of the model \eqref{GM-model}.

\begin{proposition}
The Gurtin-MacCamy model possesses exactly three equilibria, say 0, $\overline{\varphi}_1$ and $\overline{\varphi}_2$, where the last are given by
\begin{equation}\label{def:equib}
    \overline{\varphi}_1(a) = \kappa_1 e^{-\int_0^a \mu(l) \dd l} \quad \text{and} \quad \overline{\varphi}_2(a) = \kappa_2 e^{-\int_0^a \mu(l) \dd l}.
\end{equation}
\end{proposition}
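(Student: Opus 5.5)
A steady state of \eqref{GM-model} is a time-independent function $\varphi\in\L^1_+(0,+\infty)$ satisfying
\[
\varphi'(a)=-\mu(a)\varphi(a) \quad \text{for a.e. } a>0, \qquad \varphi(0)=f\left(\int_0^{+\infty}\beta(a)\varphi(a)\dd a\right),
\]
understood in the mild sense, i.e.\ along the characteristics. We reduce the stationary problem to the scalar fixed-point equation $f(x)=x$, whose solution set is given by Assumption \ref{as-1}(iii).

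\textbf{Step 1: solving the ODE.} Since $\mu\in\L^\infty_+$, the equation $\varphi'=-\mu\varphi$ is a linear ODE with bounded coefficient. Its absolutely continuous solutions are exactly
\[
\varphi(a)=\kappa\, e^{-\int_0^a\mu(l)\dd l}, \qquad \kappa=\varphi(0)\ge 0.
\]
Equivalently, for a stationary solution the characteristic formula $u(t,a)=e^{-\int_{a-t}^a\mu}u_0(a-t)$ for $a>t$, together with the boundary representation for $a<t$, forces this form. Assumption \ref{as-1}(ii) gives $e^{-\int_0^a\mu}\le e^{-\unmu a}$. Hence each such $\varphi$ lies in $\L^1_+(0,+\infty)$, so no admissible candidate is lost.

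\textbf{Step 2: the boundary condition.} Substituting into the boundary condition and using the normalization \eqref{eq:norm}, we obtain
\[
\int_0^{+\infty}\beta(a)\varphi(a)\dd a=\kappa\int_0^{+\infty}\beta(a)e^{-\int_0^a\mu(l)\dd l}\dd a=\kappa .
\]
The condition therefore becomes $\kappa=f(\kappa)$ with $\kappa\ge0$. By Assumption \ref{as-1}(iii) this holds exactly for $\kappa\in\{0,\kappa_1,\kappa_2\}$. These values give $0$, $\overline{\varphi}_1$ and $\overline{\varphi}_2$ as in \eqref{def:equib}, and they are pairwise distinct.

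\textbf{Main obstacle.} The only point needing care is Step 1: justifying that every equilibrium has this exponential form when solutions are taken in the weak/mild $\L^1$ sense. We handle it through the integrated (characteristic) formulation. Stationarity means $u(t,\cdot)=\varphi$ for all $t$. Evaluating the mild solution at $t>a$ gives
\[
\varphi(a)=e^{-\int_0^a\mu}\, f\!\left(\int_0^{+\infty}\beta\varphi\right) \quad \text{for a.e. } a.
\]
This is directly of the form found in Step 1, with $\kappa=f\left(\int_0^{+\infty}\beta\varphi\right)$, so no regularity issue remains.
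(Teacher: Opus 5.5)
Your proposal is correct and follows the paper's route. Like the paper, you solve $\varphi'=-\mu\varphi$ to get $\varphi(a)=\varphi(0)e^{-\int_0^a\mu(l)\dd l}$, then substitute into the boundary condition and use the normalization \eqref{eq:norm} to reduce to $\varphi(0)=f(\varphi(0))$; your additional justification via the characteristic (mild) formulation and the check $e^{-\int_0^a\mu(l)\dd l}\le e^{-\unmu a}$, which places these profiles in $\L^1_+$, are sound refinements that the paper leaves implicit.
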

\begin{proof}
Let $\overline{\varphi}(a)$ be a steady state of the Gurtin-MacCamy model. Thus, the differential equation in \eqref{GM-model} turns into $\overline{\varphi}'(a) = -\mu(a) \overline{\varphi}(a)$, which yields $\overline{\varphi}(a) = \overline{\varphi}(0) e^{-\int_0^a \mu(l) \dd l}$. Using the boundary condition and the formula just obtained, we get that $\overline{\varphi}(0) = f\left( \overline{\varphi}(0) \int_0^{+\infty} \beta(a)e^{-\int_0^a \mu(l) \dd l}  \dd a \right)$. Thus, from the normalization hypothesis given in Assumption \ref{as-1}, we deduce that $\overline{\varphi}(0)\in \{0, \kappa_1, \kappa_2\}$, completing the proof.
\end{proof}

An important quantity to consider is the maximal reproductive age for the population. Mathematically, it can be defined as
\begin{equation}\label{def:a*}
    a^* := \sup\left\{ a\geq 0 \colon \int_a^{+\infty} \beta(a)e^{-\int_0^a \mu(l) \dd l} \dd a > 0  \right\} \in (0,+\infty].
\end{equation}
Observe that this definition implies that $\beta(a) = 0$ almost everywhere on $(a^*,+\infty)$ and
\[
    \int_\sigma^{a^*} \beta(a) e^{-\int_0^a \mu(l) \dd l} \dd a >0, \quad \forall \sigma\in[0,a^*).
\]
The case $a^*=+\infty$ is more intricate to treat due to the unboundedness in the domain, and it requires some additional assumptions in the fertility rate; see \cite{MR4905314,MR4905582}.

To state our main result, we also consider a family of initial data $\{ u_\lambda \}_{\lambda\geq 0} \subseteq \L^1_+(0,+\infty)$ under the following assumption.

\begin{assumption}\label{as-2}
The following conditions over the family $\{u_\lambda\}_{\lambda \geq 0}$ hold:
\begin{enumerate}[label=(\roman*)]
    \item The map $\lambda \to u_\lambda$ is continuous as a map from $\R_+$ into $\L^1(0,+\infty)$, with $u_0 \equiv 0$.
    \item The family $\{ u_\lambda \}_{\lambda \geq 0}$ is monotone increasing, in the sense that if $\lambda < \eta$, then $u_\lambda \leq u_\eta$ a.e and the inequality is strict in some subset of $(0,a^*)$ of positive measure.
\end{enumerate}
\end{assumption}

We denote by $u^\lambda$ the solution of \eqref{GM-model} with initial distribution $u_\lambda$. The last condition, due to the definition of $a^*$, can be interpreted as follows: each initial distribution $u_\eta$ for the initial population will create some extra mass than the preceding ones $u_\lambda$ with $\lambda < \eta$, as the number of individuals in reproductive age increases as the parameter does. Our main result is the following:

\begin{theorem}\label{th:main}
Suppose Assumptions \ref{as-1} and \ref{as-2} are valid, and that $a^*<+\infty$ holds. We have the following alternative:
\begin{itemize}
    \item Either for every $\lambda \geq 0$, $u^\lambda(t,\cdot)\to 0$ in $\L^1(0,+\infty)$,
    \item or there exists a unique $\lambda^*>0$ such that
    \[
        \lim_{t\to \infty} u^\lambda(t,\cdot) = \begin{cases}
            0, & \text{if}~ \lambda < \lambda^*, \\
            \overline{\varphi}_2, & \text{if}~ \lambda>\lambda^*,
        \end{cases}
    \]
    in $\L^1(0,+\infty)$. In that case, there exists $\delta>0$ with
    \[
        \liminf_{t\to \infty} \| u^{\lambda^*}(t,\cdot) \|_{\L^1} \geq \delta \quad \text{and} \quad \liminf_{t\to \infty} \| u^{\lambda^*}(t,\cdot) - \overline{\varphi}_2 \|_{\L^1} \geq \delta.
    \]
\end{itemize}
\end{theorem}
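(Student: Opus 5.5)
We reduce the problem to a monotone semiflow on a compact-delay state space and then adapt the classical sharp-threshold argument for bistable monotone systems.

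\textbf{Step 1: reduction to a renewal equation.} Since $a^*<+\infty$, integrating along characteristics gives $u(t,a)=u_0(a-t)\,\pi(a)/\pi(a-t)$ for $a>t$ and $u(t,a)=\pi(a)\,b(t-a)$ for $a<t$, where $\pi(a)=e^{-\int_0^a\mu}$ and $b(t)=u(t,0)$. The birth rate then satisfies
\[
b(t)=f\Big(\int_0^{\min(t,a^*)}\beta(a)\pi(a)\,b(t-a)\,\dd a+F_0(t)\Big),
\]
where $F_0$ collects the contribution of $u_0$ and vanishes for $t\geq a^*$. For $t\ge a^*$ this is an autonomous equation with finite delay $a^*$, generating a semiflow $\Phi_t$ on the history segment $b_t=b(t+\cdot)|_{[-a^*,0]}$ in $\L^1_+(-a^*,0)$ (or in $C$ after one period of smoothing). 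Since $f'>0$, comparison gives $u_\lambda\le u_\eta \Rightarrow u^\lambda\le u^\eta$. The key claim is \emph{strict} monotonicity: by Assumption~\ref{as-2}(ii) the strict inequality on a positive-measure subset of $(0,a^*)$ produces a strictly larger $b$ at some time, because $\int_\sigma^{a^*}\beta\pi>0$. After a time of order $2a^*$, positivity propagates through the integral kernel, so that $b^\lambda_t\ll b^\eta_t$ on $[-a^*,0]$, possibly only in a weakened sense on the support of $\beta$. We also show that $\Phi_t$ is eventually compact and dissipative. Dissipativity follows from \eqref{diag_cond}, and compactness follows because $b$ is a convolution for $t\ge a^*$. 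Convergence of $u^\lambda(t,\cdot)$ in $\L^1$ is then equivalent to convergence of $b_t$ together with the tail estimate $e^{-\underline\mu t}$.

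\textbf{Step 2: the two basins.} Let $\Lambda_0=\{\lambda: u^\lambda\to0\}$ and $\Lambda_2=\{\lambda: u^\lambda\to\overline\varphi_2\}$. Linearizing at $0$ and using $f'(0)<1$ with the normalization \eqref{eq:norm}, the renewal equation has negative growth bound, so $0$ is locally exponentially stable. The same holds at $\overline\varphi_2$ because $f'(\kappa_2)<1$. Together with continuous dependence on $\lambda$ (Assumption~\ref{as-2}(i)) on finite time intervals, both $\Lambda_0$ and $\Lambda_2$ are open. By monotonicity, $\Lambda_0$ is an interval $[0,\lambda_0)$ containing a neighbourhood of $0$, and $\Lambda_2$ is an interval $(\lambda_2,\infty)$ if it is nonempty. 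If $\Lambda_2=\emptyset$, we must show that every solution still goes to $0$. For this we use that in a strongly monotone, eventually compact semiflow, generic orbits converge to equilibria. An orbit converging to $\overline\varphi_1$ would be exceptional, and by strong monotonicity at most one $\lambda$ can have an $\omega$-limit set containing $\overline\varphi_1$ or meeting the separatrix. If that $\lambda$ exists and $\Lambda_2$ is empty, nearby $\lambda$ above it converge to $0$, which contradicts ordering. This yields the first alternative. The openness and boundary limits also give the $\liminf$ bound by $\delta$ at $\lambda^*$: the orbit stays outside fixed neighbourhoods of both stable equilibria.

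\textbf{Step 3: uniqueness of the threshold (main obstacle).} We must show $\lambda_0=\lambda_2$. Suppose $\lambda_0<\lambda<\eta<\lambda_2$. Both orbits are threshold orbits, and by Step 1, $b^\lambda_{t_0}\ll b^\eta_{t_0}$ for some $t_0$. The plan follows Smith--Thieme and the Krisztin--Walther--Wu spindle picture. The threshold set is the boundary $\partial\mathcal B(0)$ of the basin of $0$. We would show it is \emph{unordered}: no two of its points satisfy $x\ll y$. If $x\ll y$ with $x\in\partial\mathcal B(0)$, strong order preservation and openness of $\mathcal B(0)$ put a small ball around $x$ below $y$, and that ball contains points of the complement of $\overline{\mathcal B(0)}$. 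Those points do not converge to $0$ and lie below $y$. Iterating the argument symmetrically with $\mathcal B(\overline\varphi_2)$ leads to $y\in\mathcal B(\overline\varphi_2)$, a contradiction. The delicate part is making this work. First, strong monotonicity only holds eventually and possibly only on the support of $\beta\pi$. Second, the complement of the two basins might be thicker than a codimension-one set. To handle this, we will argue with $\omega$-limit sets: the compact invariant sets $\omega(\lambda)\le\omega(\eta)$ are strictly ordered by Step 1. Along a common time sequence, a limit point $x$ of the orbit for $\lambda$ and a limit point $y$ of the orbit for $\eta$ then satisfy $x\ll y$ after applying $\Phi_{2a^*}$. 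We then insert a small multiple of the positive eigenfunction of the linearization at $x$ to contradict the exclusion from both basins.
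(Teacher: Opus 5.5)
Your Steps 1 and 2 match the paper's setup: the semiflow on histories over $[-a^*,0]$, eventual strong monotonicity and compactness, openness of the two basins, and the $\delta$-bounds from the fact that the orbit never enters either basin. The genuine gap is Step 3, which is the whole content of the theorem, and your derivation of the first alternative in Step 2 also relies on it.

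\emph{The boundary argument.} The threshold set is not known a priori to equal $\partial\mathcal B(0)$. Unorderedness of $\partial\mathcal B(0)$ is in fact immediate, since $\mathcal B(0)$ is downward closed: if $x\ll y\in\overline{\mathcal B(0)}$, then $x\in\mathcal B(0)$. But this only pins down $\lambda_0$. It says nothing about parameters in $(\lambda_0,\lambda_2)$, whose data need not lie on $\partial\mathcal B(0)$. Your ``iterate symmetrically with $\mathcal B(\overline\varphi_2)$'' step is never made precise.

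\emph{The $\omega$-limit fallback.} This fails too. From $b^\lambda_{t_n}\ll b^\eta_{t_n}$ you only get $x\le y$ in the limit. The case $x=y$ is exactly the scenario to be excluded, for instance both threshold orbits tending to $\kappa_1$, and then no application of $\Phi_{2a^*}$ produces $x\ll y$. Moreover, a ``positive eigenfunction of the linearization at $x$'' is undefined when $x$ is not an equilibrium, and you do not say what contradiction it would yield. In particular, the hypothesis $f'(\kappa_1)>1$, which is what forces uniqueness of the threshold, never enters your argument concretely.

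\emph{The missing idea.} This is the paper's Proposition \ref{unord_S}: replace the two threshold orbits by convergent ones, then exploit the instability of $\kappa_1$ on their difference.
\begin{itemize}
\item From $b^\lambda_{t_0}\ll b^\eta_{t_0}$, generic quasiconvergence for strongly order-preserving semiflows gives $\tilde\varphi,\tilde\psi$ with $b^\lambda_{t_0}\ll\tilde\varphi\ll\tilde\psi\ll b^\eta_{t_0}$ whose $\omega$-limit sets consist of equilibria.
\item These limit sets are connected and the equilibria are finite, so each is a single equilibrium. Comparison with the two threshold orbits excludes $0$ and $\kappa_2$, so both solutions converge to $\kappa_1$.
\item Then $\xi=b^{\tilde\psi}-b^{\tilde\varphi}$ satisfies $\xi>0$ and $\xi(t)\to 0$. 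By the mean value theorem, for all $t\ge T$,
\[
\xi(t)\ \ge\ \rho\int_0^{a^*}\beta(a)e^{-\int_0^a\mu(l)\,\dd l}\,\xi(t-a)\,\dd a,\qquad \rho\in\bigl(1,f'(\kappa_1)\bigr).
\]
\item Evaluate at times $t_n\to\infty$ with $\xi(t_n)=\min_{[0,t_n]}\xi$, and use the normalization \eqref{eq:norm}. This gives $\xi(t_n)\ge\rho\,\xi(t_n)$, which is impossible.
\end{itemize}
This shows the threshold set is unordered, so $M_\sigma$ contains at most one point. The first alternative then follows directly: if $\Lambda_2=\emptyset$ and $\lambda_0<\infty$, all of $[\lambda_0,\infty)$ would consist of ordered threshold data.
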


As far as we know, this is the first time that a sharp threshold dynamics for an age-structured model is rigorously proved. The proof of this result exploits the fact that having a monotone nonlinearity yields a monotone dynamical system \cite{MR1319817} in the state space. Although there is a considerably developed theory around this type of dynamical systems, the monotonicity on its own is not enough to determine the complete dynamics of the system. It must be strengthened as requiring a strong monotonicity or a strongly order-preserving property on the semiflow, the latter being a more flexible notion since it allows the cone generating the order to have an empty interior.

For the system \eqref{GM-model}, the main difficulty arises when we allow $\beta$ to have noncompact support; that is, $a^* = +\infty$. This means that the effect of the initial data $u_0$ over the solution is carried for all $t\geq 0$, and therefore the strong order preserving property for the semiflow generated by the model is not completely clear. Despite this, a particular non-compactly supported case can be studied by this approach. Namely, assume that $\beta$ and $\mu$ are eventually constant; that is, there are $a_0>0$ and positive constants $\beta_\infty$ and $\mu_\infty$ such that $\beta(a) = \beta_\infty$ and $\mu(a) = \mu_\infty$ for almost every $a>a_0$. Then, the dynamics of the Gurtin-MacCamy model can be studied employing a coupled integro-differential system of equations, which yields a strongly monotone dynamical system; see Section \ref{sec:non_compact}. Next, we state this claim formally.

\begin{assumption}\label{as:4}
There exists $a_0\geq 0$, $\beta_\infty>0$ and $\mu_\infty > 0$ such that $\beta(a) = \beta_\infty$ and $\mu(a) = \mu_\infty$ for almost every $a> a_0$.
\end{assumption}

\begin{theorem}\label{th:main_2}
Suppose Assumptions \ref{as-1}, \ref{as-2} and \ref{as:4} are valid. We have the following alternative:
\begin{itemize}
    \item Either for every $\lambda \geq 0$, $u(t, \cdot)\to 0$ in $\L^1(0,+\infty)$.
    \item or there exists a unique $\lambda^*>0$ with
    \[
        \lim_{t\to \infty} u^\lambda(t,\cdot) = \begin{cases}
            0, & \text{if}~ \lambda < \lambda^*, \\
            \overline{\varphi}_2, &\text{if}~ \lambda > \lambda^*,
        \end{cases}
    \]
    in $\L^1(0,+\infty)$. Additionally, there exists $\delta>0$ with
    \[
        \liminf_{t\to \infty} \| u^{\lambda^*}(t,\cdot) \|_{\L^1} \geq \delta \quad \text{and} \quad \liminf_{t\to \infty} \| u^{\lambda^*}(t,\cdot) - \overline{\varphi}_2 \|_{\L^1} \geq \delta.
    \]
\end{itemize}
\end{theorem}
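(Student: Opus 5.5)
The plan is to reduce \eqref{GM-model} under Assumption \ref{as:4} to a finite-age problem coupled with one ODE for the tail mass. This restores a strongly monotone (strongly order preserving) semiflow, and then we can run the same threshold argument as in Theorem \ref{th:main}.

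\textbf{Reduction.} Set $V(t):=\int_{a_0}^{+\infty} u(t,a)\,\dd a$. Integrating the McKendrick equation over $(a_0,+\infty)$ and using $\mu\equiv\mu_\infty$ there gives
\[
V'(t)=u(t,a_0)-\mu_\infty V(t).
\]
The birth condition becomes
\[
u(t,0)=f\Big(\int_0^{a_0}\beta(a)u(t,a)\,\dd a+\beta_\infty V(t)\Big).
\]
Thus the state $(u|_{(0,a_0)},V)\in \L^1_+(0,a_0)\times\R_+$ is closed under the dynamics. For $t\ge a_0$, integration along characteristics gives
\[
u(t,a)=e^{-\int_0^a\mu}B(t-a),\qquad B(t):=u(t,0),
\]
which yields the integro-differential system
\[
B(t)=f\Big(\int_0^{a_0}\beta(a)e^{-\int_0^a\mu}B(t-a)\,\dd a+\beta_\infty V(t)\Big),\qquad V'(t)=e^{-\int_0^{a_0}\mu}B(t-a_0)-\mu_\infty V(t).
\]
Its phase space is $X=C([-a_0,0])\times\R$ (after a regularizing time $t\ge a_0$, or in $\L^1$ with the same kernel structure). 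The $\L^1$-distance of $u$ to $0$ or to $\overline{\varphi}_2$ is controlled by the distance of $(B_t,V(t))$ to $(0,0)$ or to $(\kappa_2,\kappa_2 e^{-\int_0^{a_0}\mu}/\mu_\infty)$. So convergence statements transfer in both directions.

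\textbf{Monotonicity and strong order preservation.} Since $f'>0$, $\beta\ge0$ and $\beta_\infty>0$, comparison for the Volterra equation gives a monotone semiflow. Strictness comes from the tail: if $u_\lambda<u_\eta$ on a set of positive measure in $(0,+\infty)$ (here $a^*=+\infty$), then either $V_\lambda(0)<V_\eta(0)$, or the strict gap lies in $(0,a_0)$. In the second case it propagates through $B$ (using $\beta$ positive near $a_0$, since $\beta=\beta_\infty$ beyond $a_0$) and reaches the tail in time at most $a_0$.
\begin{itemize}
\item Once $V_\lambda(t_0)<V_\eta(t_0)$, the term $\beta_\infty V$ and $f'>0$ force $B_\lambda<B_\eta$ for all later times.
\item The inequality in $V$ then persists for all time, because $V'=\cdots-\mu_\infty V$ is a linear ODE with nonnegative forcing difference.
\end{itemize}
Hence for $t\ge 2a_0$ we get $(B_{\lambda,t},V_\lambda(t))\ll(B_{\eta,t},V_\eta(t))$ in the interior of the cone of $X$. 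This gives eventual strong monotonicity. Compactness of orbits for $t\ge a_0$ follows from the Volterra smoothing (Arzelà--Ascoli), and boundedness follows from \eqref{diag_cond}.

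\textbf{Threshold argument.} Define
\[
\Lambda_0=\{\lambda:u^\lambda\to0\},\qquad \Lambda_2=\{\lambda:u^\lambda\to\overline{\varphi}_2\}.
\]
Both sets are open, by the local asymptotic stability of $0$ and $\overline{\varphi}_2$ (since $f'(0),f'(\kappa_2)<1$ and \eqref{eq:norm} give linearized spectral bound $<0$) together with continuity in $\lambda$. By monotonicity both are intervals, with $0\in\Lambda_0$, and $\Lambda_0=[0,\lambda_0)$, $\Lambda_2=(\lambda_2,\infty)$ when nonempty. If $\Lambda_2=\emptyset$, we aim to show the first alternative holds. Otherwise we show $\lambda_0=\lambda_2$, and this is the main obstacle.

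Suppose $\lambda_0<\lambda<\eta<\lambda_2$. Both $\omega$-limit sets avoid the stable equilibria. By strong monotonicity we have $\omega(\lambda)\ll\omega(\eta)$ after shifting time, and both lie in the basin boundary. I would use a Smith/Hirsch-type result: in a strongly monotone system, the separatrix between two attractors is unordered (no two points satisfy $x\ll y$), in the spirit of the invariant disk of \cite{MR1719128}. Concretely, pick $x\in\omega(\lambda)$ and $y\in\omega(\eta)$ with $x\ll y$. Then a small neighborhood of $x$ lies below $y$, so a perturbation of $y$ downward still dominates $x$. Using the one-dimensional unstable direction at $\overline{\varphi}_1$ (via $f'(\kappa_1)>1$) and the Krein--Rutman principal eigenvector, one of the two orbits must be pushed into a stable basin. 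This gives a contradiction, so $\lambda_0=\lambda_2=\lambda^*$.

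Finally, $\lambda^*\notin\Lambda_0\cup\Lambda_2$ by openness. So $\omega(\lambda^*)$ is compact, invariant, and disjoint from $\{0,\overline{\varphi}_2\}$. It follows that it stays a positive distance $\delta$ from both points, which gives the two $\liminf$ estimates.
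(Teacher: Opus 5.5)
You follow the paper's route for the reduction to $(B,V)$, for eventual strong monotonicity through the tail variable, and for compactness, openness of $\Lambda_0,\Lambda_2$ and the final $\liminf$ estimates. The gap is the step you flag yourself, $\lambda_0=\lambda_2$. This is the heart of the theorem, and your sketch of it does not work, for two reasons.

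First, strong monotonicity gives $\Psi(t,x_\lambda)\ll\Psi(t,x_\eta)$ for each large $t$. But $\ll$ is not preserved under limits, so you cannot conclude $\omega(\lambda)\ll\omega(\eta)$. Both orbits might converge to the middle equilibrium $\bigl(\tfrac{c_0}{\mu_\infty}\kappa_1,\kappa_1\bigr)$. Then $\omega(\lambda)=\omega(\eta)$ and no pair $x\ll y$ exists. Getting ordered limit sets needs something like the Limit Set Dichotomy, whose other alternative is exactly $\omega(\lambda)=\omega(\eta)\subset E$. This is precisely the case that must be excluded, and your argument never addresses it.

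Second, even when you can pick $x\ll y$, they are again two ordered points of the closed invariant set $Y\setminus(A_0\cup A_{\kappa_2})$, so you are back where you started. ``One of the two orbits must be pushed into a stable basin'' is not a deduction, and the Krein--Rutman heuristic is never carried out.

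The paper closes this step in three moves.
\begin{enumerate}[label=(\roman*)]
\item The cone of $\R_+\times C^0_+([-a_0,0])$ is solid. So generic quasiconvergence (Theorem 4.3 of Chapter 1 in \cite{MR1319817}) gives quasiconvergent points $(\eta,\rho)\ll(\tilde\eta,\tilde\rho)$ strictly between $\Psi(t_0,x_\lambda)$ and $\Psi(t_0,x_\eta)$.
\item There are finitely many equilibria and $\omega$-limit sets are connected, so these two orbits converge to equilibria. Comparison with the sandwiching orbits, which lie in neither basin, forces both limits to be the middle equilibrium.
\item The difference $\xi=\tilde b-b$ is positive, tends to $0$, and satisfies
\[
\xi(t)\ge g(t)\int_0^t\beta(a)e^{-\int_0^a\mu(l)\dd l}\,\xi(t-a)\,\dd a,\qquad g(t)\to f'(\kappa_1)>1 .
\]
Evaluate at times $t_n\to\infty$ with $\xi(t_n)=\min_{[0,t_n]}\xi$ and use the normalization \eqref{eq:norm_2}. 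This yields $1\ge\delta$ for any fixed $\delta\in(1,f'(\kappa_1))$, a contradiction.
\end{enumerate}
Your mention of $f'(\kappa_1)>1$ points toward step (iii). What is missing is the genericity step that produces convergent ordered orbits between the two given ones, and the estimate that rules out two strictly ordered solutions converging to the unstable equilibrium.
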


The paper is organized as follows. In Section \ref{sec:prelim} we address the well-posedness question about the age-structured model, using two different approaches. First, we prove the existence of a continuous semiflow on $\L^1_+$ yielding the solution of the Gurtin-MacCamy model using integrated semigroups to solve the associated abstract formulation of the model. The local stability and instability of the steady states is also proved by means of the principle of linearised stability. The second approach uses the method of the characteristics to obtain a representation of the solution in terms of the solution of a nonlinear Volterra integral equation. We also state some properties of this solution, as its boundedness and a comparison principle deduced from the monotonicity assumption made on $f$. This formulation is the basis of the next development in the work. In Section \ref{sec:threshold} we handle the case when $\beta$ has compact support, letting us set the problem in terms of a semiflow defined on a closed subset of the space of continuous functions on some compact interval. In this section we describe the set, called the separatrix, which contains all the initial data producing an oscillatory solution around the intermediate equilibria. The separatrix turns out to be a totally unordered set, which implies the sharp threshold property and therefore our main result follows, as is proved in Section \ref{sec:main_th}. In Section \ref{sec:non_compact} we present how this technique can also be used to study a particular case when $\beta$ has no compact support, namely when $\beta$ and $\mu$ are eventually constant. Finally, the Appendix presents how the strongly order-preserving property fails when the birth rate is not compactly supported.

\section{Preliminaries}\label{sec:prelim}

\subsection{Abstract setting of the problem}

This subsection is intended to show an abstract framework for the problem in which the well-posedness can be established in a standard way. The theory of integrated semigroups has been extensively used to study semilinear abstract Cauchy problems where the operator is non-densely defined; we refer to \cite{MR3887640} and the references therein for a well and self-contained exposition of this topic and for further reading.

We consider the space of the initial distribution $u_0$ for the Gurtin-MacCamy model to be the Banach space of Lebesgue integrable functions $\L^1(0,+\infty)$, since its norm can be interpreted as the number of total members in the population at the initial time. Thus, set
\begin{align*}
    X &= \R \times \L^1(0,+\infty), \\
    X_0 & = \{0\} \times \L^1(0,+\infty), \\
    D(A) &= \{ 0 \} \times \W^{1,1}(0,+\infty),
\end{align*}
where these spaces are endowed with the usual product norm, and observe that $\overline{D(A)} = X_0$. Also, set $X^+ = \R_+ \times \L^1_+(0,+\infty)$ as the usual cone of non-negative elements of $X$ and similarly $X_0^+ = \{0\} \times \L^1_+(0,+\infty)$. Consider the linear operator $A\colon D(A)\subseteq X \to X$ defined by
\begin{equation}\label{def-A}
    A
    \begin{pmatrix}
        0 \\ \ph 
    \end{pmatrix} = 
    \begin{pmatrix}
        -\ph(0) \\ -\ph' - \mu\ph
    \end{pmatrix}.
\end{equation}
Note that $\ph(0)$ is well defined here owing to the Sobolev embedding $\W^{1,1}(0,+\infty) \hookrightarrow C(\R_+)$. Also, note that the operator is not densely defined since $\overline{D(A)} = X_0$.

Define $F\colon X_0 \to X$ by
\begin{equation}\label{def-F}
    F
    \begin{pmatrix}
        0 \\ \varphi
    \end{pmatrix} = 
    \begin{pmatrix}
        f\left( \int_0^{+\infty} \beta(a)\ph(a) \dd a \right) \\ 0
    \end{pmatrix}.
\end{equation}
Then, identifying the solution $u(t,a)$ of the age-structured model with $v(t) = \begin{pmatrix}
    0 \\ u(t,\cdot)
\end{pmatrix} \in D(A)$, it is possible to rewrite the problem \eqref{GM-model} as an abstract semilinear Cauchy problem
\begin{equation}\label{abs-prob}
    \df{v(t)}{t} = Av(t) + F(v(t)), \quad t>0, \quad v(0) = v_0 \in X_0^+.
\end{equation}

To deduce the existence and uniqueness of a maximal solution to this problem, we will apply the theory of integrated semigroups. In fact, these properties have already been mentioned in \cite[Section 8.3]{MR3987042}, but for the sake of completeness they are stated and proved here.

\begin{lemma}
The operator $A$ is a Hille-Yosida operator; that is, there exist $\omega\in \R$ such that $(\omega,+\infty) \subset \rho(A)$ and
\[
    \| (\lambda I - A)^{-n} \| \leq \frac{1}{(\lambda - \omega)^n}, \quad \forall \lambda > \omega, ~ \forall n\geq 1.
\]
In fact, it is possible to take $\omega = \esup(-\mu)<0$.
\end{lemma}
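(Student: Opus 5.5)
We compute the resolvent of $A$ explicitly and then read off the Hille--Yosida estimate from a single bound on it. Fix $\lambda > \omega := \esup(-\mu) = -\operatorname{ess\,inf}\mu \le -\unmu < 0$, and take $(\alpha,\psi)\in \R\times\L^1(0,+\infty)$. Finding $(0,\ph)\in D(A)$ with $(\lambda I - A)(0,\ph)^T = (\alpha,\psi)^T$ amounts to solving
\[
\ph(0) = \alpha, \qquad \ph'(a) + (\lambda+\mu(a))\ph(a) = \psi(a), \quad a>0 .
\]
Variation of constants gives
\[
\ph(a) = \alpha\, e^{-\int_0^a (\lambda+\mu(l))\dd l} + \int_0^a e^{-\int_s^a(\lambda+\mu(l))\dd l}\psi(s)\dd s .
\]
Since $\lambda + \mu(l) \geq \lambda - \omega > 0$ a.e., this function lies in $\L^1$. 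Its derivative, read off from the equation, is also in $\L^1$ because $\mu\in\L^\infty$. Hence $\ph\in\W^{1,1}(0,+\infty)$. Uniqueness holds because the homogeneous problem with $\ph(0)=0$ has only the zero solution. So $\lambda\in\rho(A)$.

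For the norm bound, use $e^{-\int_s^a(\lambda+\mu)}\le e^{-(\lambda-\omega)(a-s)}$ and Fubini:
\[
\|\ph\|_{\L^1} \le \frac{|\alpha|}{\lambda-\omega} + \frac{\|\psi\|_{\L^1}}{\lambda-\omega} = \frac{\|(\alpha,\psi)\|_X}{\lambda-\omega}.
\]
The norm of $(\lambda I-A)^{-1}(\alpha,\psi)$ in $X$ equals $\|\ph\|_{\L^1}$, since the first component is $0$. Therefore $\|(\lambda I - A)^{-1}\|\le (\lambda-\omega)^{-1}$, using the product norm $|\alpha|+\|\psi\|_{\L^1}$. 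If the product norm were taken to be the max-norm instead, the bound still holds, because the $\L^1$ estimate is controlled by $|\alpha|+\|\psi\|$. I expect we should fix the sum norm for definiteness.

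For the powers we iterate the estimate. The resolvent maps $X$ into $\{0\}\times\L^1$, so for $n\ge 2$ the inputs to the later applications have $\alpha=0$. The bound $\|(\lambda-A)^{-1}\|\le(\lambda-\omega)^{-1}$ then gives $\|(\lambda I-A)^{-n}\|\le(\lambda-\omega)^{-n}$ by submultiplicativity; no separate renorming is needed. The main point requiring care is this uniform $1/(\lambda-\omega)$ estimate, which depends on using the sum norm on $X$ together with the exact exponential bound. The remaining steps are routine checks of membership in $D(A)$.
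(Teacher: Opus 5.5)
Your proposal is correct and follows the paper's argument: the explicit resolvent by variation of constants, the bound $\|\ph\|_{\L^1}\le(|\alpha|+\|\psi\|_{\L^1})/(\lambda-\omega)$ via Fubini, and then powers by submultiplicativity (the paper leaves this step implicit, as it does your $\W^{1,1}$ membership and uniqueness checks). One aside is wrong: the estimate does not survive the max-norm, since for constant $\mu$ and nonnegative data with $\alpha=\|\psi\|_{\L^1}=1$ the Fubini computation is an equality and gives $\|\ph\|_{\L^1}=2/(\lambda-\omega)$, so the sum norm (the one the paper implicitly uses) is genuinely needed for the constant $1$, not merely chosen for definiteness.
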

\begin{proof}
Set $\omega := \esup(-\mu)$. For $\lambda > \omega$ and an element $\vcol{\alpha \\ \psi}\in X$ the equation
\[
    (\lambda I - A)\begin{pmatrix}
        0 \\ \ph
    \end{pmatrix} = \begin{pmatrix}
        \alpha \\ \psi
    \end{pmatrix} \Leftrightarrow \begin{dcases}
        \ph(0) = \alpha, \\
        \lambda \ph + \ph' + \mu \ph = \psi, 
    \end{dcases}
\]
can be solved in $D(A)$ in a unique way as
\[
    \ph(a) = \alpha e^{-\int_0^a \lambda + \mu(l) \dd l} + \int_0^a e^{-\int_s^a \lambda + \mu(l) \dd l} \psi(s) \dd s.
\]
Moreover, we can estimate its norm as
\begin{align*}
    \int_0^{+\infty} |\ph(a)| \dd a & \leq |\alpha| \int_0^{+\infty} e^{-\int_0^a \lambda + \mu(l) \dd l} \dd a + \int_0^{+\infty} \int_0^a e^{-\int_s^a \lambda+\mu(l) \dd l} |\psi(s)| \dd s \dd a \\
    & \leq |\alpha| \int_0^{+\infty} e^{-(\lambda-\omega)a} \dd a + \int_0^{+\infty} \int_s^{+\infty} e^{-(\lambda-\omega)(a-s)} |\psi(s)| \dd s \dd a \\
    &= \frac{|\alpha|}{\lambda-\omega} + \frac{\|\psi\|}{\lambda - \omega} = \frac{1}{\lambda - \omega} \left\| \begin{pmatrix}
        \alpha \\ \psi
    \end{pmatrix} \right\|,
\end{align*}
and since $\| (\lambda I - A)^{-1} \vcol{\alpha \\ \psi} \| = \|\ph\|$ we deduce that
\[
    \| (\lambda I - A)^{-1} \| \leq \frac{1}{\lambda - \omega}
\]
which implies that $(\omega,+\infty)\subseteq \rho(A)$ and $A$ is a Hille-Yosida operator.
\end{proof}

Consider $A_0$, the part of $A$ if $X_0 = \overline{D(A)}$. This result imply that $A_0$ is a densely defined Hille-Yosida operator, and therefore it generates a $C_0$-semigroup $\{T_{A_0}(t)\}_{t\geq 0}$ on $X_0$ (see \cite[Lemma 3.4.2]{MR3887640}) which satisfies
\[
    \| T_{A_0}(t) \| \leq e^{\omega t},
\]
with $\omega$ as in the above Lemma. Furthermore, the semigroup generated by $A_0$ is given by the formula
\[
    T_{A_0}(t) \begin{pmatrix}
        0 \\ \ph
    \end{pmatrix} = \begin{pmatrix}
        0 \\ \hat{T}_{A_0}(t)\ph
    \end{pmatrix}, \quad \hat{T}_{A_0}(t)(\ph)(a) = \begin{cases}
           e^{-\int_{a-t}^a \mu(l) \dd l} \ph(a-t), & \text{for}~a\geq t, \\ 
           0, & \text{for}~ a\leq t,
        \end{cases}
\]
with $\begin{pmatrix} 0 \\ \varphi \end{pmatrix} \in X_0$. Indeed, note that if $\begin{pmatrix} 0 \\ \psi \end{pmatrix} \in X_0$, $\lambda > \omega$ and $a>0$ are given, then
\begin{align*}
    \int_0^{+\infty} e^{-\lambda s} \hat{T}_{A_0}(s)(\psi)(a) \dd s &= \int_0^a e^{-\lambda s} e^{-\int_{a-s}^a \mu (l) \dd l} \psi(a-s) \dd s \\
    &= \int_0^a e^{-\int_s^a \lambda + \mu(l) \dd l} \psi(s) \dd s.
\end{align*}
This computation implies that
\[
    (\lambda I - A_0)^{-1} \begin{pmatrix}
        0 \\ \psi
    \end{pmatrix} = \int_0^{+\infty} e^{-\lambda s} T_{A_0}(s) \begin{pmatrix}
        0 \\ \psi
    \end{pmatrix} \dd s, 
\]
and therefore the claim follows from the Laplace transform characterization of the infinitesimal generator due to Arendt (see \cite{MR872810} or \cite[Theorem 2.3.9]{MR3887640}).

Additionally, $A$ generates an integrated semigroup $\{S_A(t)\}_{t\geq 0}$ (see \cite[Proposition 3.4.3]{MR3887640}) which is given by
\[
    S_A(t) = (\lambda I - A_0) \int_0^t T_{A_0}(s) \dd s (\lambda I - A)^{-1},
\]
for any $\lambda>\omega$. Actually, when the integrated semigroup is restricted to the space $X_0$ it can be computed easily from the following representation
\[
    S_A(t)x = \int_0^t T_{A_0}(s)x \dd s, \quad \forall t\geq 0,~\forall x\in X_0.
\]

Thus, the integrated semigroup is defined by
\[
    S_A(t)\begin{pmatrix}
        \alpha \\  \ph
    \end{pmatrix} = \begin{pmatrix}
        0 \\ L(t)\alpha + \int_0^t \hat{T}_{A_0}(s)\ph \dd s
    \end{pmatrix},
\]
where
\[
    L(t)(\alpha)(a) = \begin{cases}
        0, & \text{for}~ a\geq t, \\
        \alpha e^{-\int_0^a \mu(l) \dd l}, & \text{for}~ a\leq t.
    \end{cases}
\]

On the other hand, since the function $f$ is assumed to be continuously differentiable with bounded derivative, it follows that $F$ is a positive Fréchet differentiable and Lipschitz map. Moreover, the derivative at an element $\vcol{0 \\ \ph}\in X_0$ is the linear operator $DF \vcol{0 \\ \ph} \colon X_0 \to X$ given by
\[
    DF\begin{pmatrix}
        0 \\ \ph
    \end{pmatrix} \begin{pmatrix}
        0 \\ \psi
    \end{pmatrix} = \begin{pmatrix}
        f'\left( \int_0^{+\infty} \beta(a) \ph(a) \dd a \right) \int_0^{+\infty} \beta(a) \psi(a) \dd a \\ 0
    \end{pmatrix}.
\]
Owing to these observations, and since the operator $A$ is resolvent positive, the following result follows from the theory of integrated semigroups applied to semilinear Cauchy problems, see e.g. \cite[Section 5]{MR3887640}.

\begin{proposition}
Let Assumption \ref{as-1} be satisfied. Then there exists a continuous semiflow $\{ U(t) \}_{t\geq 0}$ on $X_0^+$ such that for every $u_0\in X_0^+$, the map $t\mapsto U(t)u_0$ is the unique integrated solution of the abstract semilinear Cauchy problem \eqref{abs-prob}:
\[
    \begin{dcases}
    \df{v(t)}{t} = Av(t) + F(v(t)), \quad t\geq 0, \\
    v(0)= u_0,
    \end{dcases}
\]
that is,
\[
    U(t)u_0 = u_0 + A\int_0^t U(s)u_0 \dd s + \int_0^t F(U(s)u_0) \dd s.
\]
\end{proposition}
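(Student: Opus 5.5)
The plan is to apply the general theory of semilinear Cauchy problems with non-densely defined Hille--Yosida operators (see \cite[Section 5]{MR3887640}). The ingredients are the Hille--Yosida property of $A$ proved in the Lemma above and the global Lipschitz continuity of $F$ on $X_0$. The latter follows from
\[
\left| f\Bigl(\int_0^{+\infty}\beta\ph\Bigr)-f\Bigl(\int_0^{+\infty}\beta\psi\Bigr)\right|\leq \|f'\|_\infty\,\ovbeta\,\|\ph-\psi\|_{\L^1}.
\]

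First, I will obtain existence and uniqueness of a maximal integrated solution for every $u_0\in X_0$. To do this, I will write the problem in its variation-of-constants form
\[
U(t)u_0=T_{A_0}(t)u_0+\frac{\dd}{\dd t}(S_A\diamond F(U(\cdot)u_0))(t),
\]
and run a Banach fixed point argument in $C([0,T];X_0)$. The key estimate is the one valid for Hille--Yosida operators,
\[
\Bigl\|\tfrac{\dd}{\dd t}(S_A\diamond g)(t)\Bigr\|\leq \int_0^t e^{\omega(t-s)}\|g(s)\|\,\dd s .
\]
Because $F$ is globally Lipschitz, Gronwall's lemma rules out blow-up, so the solution is global. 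The same estimate gives Lipschitz dependence on initial data uniformly on compact time intervals. Together with the strong continuity of $t\mapsto U(t)u_0$, this yields joint continuity of $(t,u_0)\mapsto U(t)u_0$. The semigroup property $U(t+s)=U(t)U(s)$ follows from uniqueness.

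The step I expect to be the main obstacle is positivity, that is, invariance of $X_0^+$. Since $f$ is only defined on $\R_+$, I will first extend it to an odd or constant-zero $C^1$ function on $(-\infty,0)$; for instance, set $f(x)=f'(0)x$ for $x<0$, which keeps the extension $C^1$ and Lipschitz. I will then use the standard trick that for $\gamma\geq \ovbeta\|f'\|_\infty$ the map $x\mapsto F(x)+\gamma x$ sends $X_0^+$ into $X^+$. This holds trivially here, since $F(X_0^+)\subseteq X^+$ because $f\geq 0$ on $\R_+$.

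With this in hand, I will rewrite the equation with the operator $A-\gamma I$, which is still Hille--Yosida and resolvent positive. The resolvent formula in the Lemma shows that $(\lambda I-A)^{-1}$ maps $X^+$ into $X_0^+$. The Picard iterates then remain in the closed cone $X_0^+$, and so does their limit. Alternatively, I can argue directly via the explicit formulas for $T_{A_0}$ and $S_A$: the convolution term equals $\bigl(0,\ a\mapsto e^{-\int_0^a\mu}\,b(t-a)\1_{a\le t}\bigr)$ with $b=f(\int\beta u)\ge0$. Restricting the semiflow to $X_0^+$ gives the claimed continuous semiflow, and the integrated identity $U(t)u_0=u_0+A\int_0^tU(s)u_0\,\dd s+\int_0^tF(U(s)u_0)\,\dd s$ is exactly the definition of an integrated solution.
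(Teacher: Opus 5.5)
Your proposal is correct and follows essentially the same route as the paper, which gives no separate argument and simply invokes the semilinear theory of \cite[Section 5]{MR3887640} with exactly your ingredients: the Hille--Yosida property of $A$, Lipschitz continuity and positivity of $F$, and resolvent positivity of $A$. You spell out the fixed-point, Gronwall and cone-invariance steps that the paper delegates to the reference, and your explicit extension of $f$ to $(-\infty,0)$ (needed for $F$ to be defined on all of $X_0$) fills a point the paper glosses over; two small slips are that the constant-zero extension is only Lipschitz, not $C^1$ unless $f'(0)=0$ (harmless here), and that $S_A\diamond g$ already denotes $\frac{\dd}{\dd t}(S_A*g)$, so the extra time derivative in your formula is a notational error.
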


\subsection{Existence and local stability of steady states}

We start by reminding the principle of linearised stability for the evolution equation \eqref{abs-prob} with a non-densely defined generator; see \cite{MR872517,MR1073056}. Here, if $B$ represents the generator of a $C^0$-semigroup $\{ T_{B}(t) \}_{t\geq 0}$, define
\[
    \omega_0(B) := \lim_{t\to +\infty} \frac{\log \| T_B(t) \|}{t} \quad\text{and}\quad \omega_1(B) := \lim_{t\to +\infty} \frac{\log \| T_B(t) \|_{\text{ess}}}{t}
\]
as the growth bound and the essential growth bound of $\{T_B(t)\}_{t\geq 0}$, respectively, where $\| T_B(t) \|_{\text{ess}}$ denotes the essential seminorm of the operator $T_B(t)$.

\begin{proposition}
Let $F$ be continuously Fréchet differentiable in $X_0$ and let $u^*$ be a steady state. Denote by $(A+DF(v^*))_0$ the part of $A+DF(v^*)$ in $X_0$.
If $\omega_0((A+DF(v^*))_0)<0$ then for any $\omega>\omega_0((A+DF(v^*))_0)$ there exist $M,\delta> 0$ such that
\[
    \| U(t)u - u^* \| \leq M e^{\omega t} \| u-u^* \|, \quad \forall t\geq 0,
\]
for all $u\in X_0$ with $\|u-u^*\|\leq \delta$.
\end{proposition}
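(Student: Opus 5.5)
This is the classical principle of linearised stability for semilinear problems with a non-densely defined Hille--Yosida generator. The plan is to reduce it to a Gronwall-type estimate on the variation of constants formula written in terms of the integrated semigroup. We write $B := A + DF(v^*)$. Since $DF(v^*)\colon X_0\to X$ is a bounded linear operator and $A$ is Hille--Yosida, the perturbation result for Hille--Yosida operators (see \cite[Section 3.5]{MR3887640}) shows that $B$ is again Hille--Yosida. Hence $B_0$, the part of $B$ in $X_0$, generates a $C_0$-semigroup $\{T_{B_0}(t)\}_{t\ge 0}$, and $B$ generates an integrated semigroup $\{S_B(t)\}_{t\geq 0}$.

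\textbf{Step 1: rewrite the problem around $v^*$.} Set $w(t)=U(t)u-u^*$ and
\[
R(w) := F(u^*+w)-F(u^*)-DF(u^*)w .
\]
Then $w$ is an integrated solution of $w'=Bw+R(w)$. By the variation of constants formula for integrated semigroups,
\[
w(t)=T_{B_0}(t)w(0)+\frac{\dd}{\dd t}(S_B\ast R(w(\cdot)))(t).
\]

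\textbf{Step 2: estimate the convolution term.} Fix $\omega$ with $\omega_0(B_0)<\omega<0$. The key estimate I would use is the one valid for Hille--Yosida operators: for every $\hat\omega>\omega_0(B_0)$ there is $\hat M\ge 1$ with
\[
\Big\|\frac{\dd}{\dd t}(S_B\ast g)(t)\Big\|\le \hat M\int_0^t e^{\hat\omega(t-s)}\|g(s)\|\dd s
\]
for every continuous $g\colon[0,\infty)\to X$. To obtain it, first note that the Hille--Yosida bound gives a uniform bound on $\limsup_{\lambda\to\infty}\lambda(\lambda I-B)^{-1}$. Then combine this with the bound $\|T_{B_0}(t)\|\le M e^{\hat\omega t}$ through the approximation
\[
\frac{\dd}{\dd t}(S_B\ast g)(t)=\lim_{\lambda\to\infty}\int_0^t T_{B_0}(t-s)\lambda(\lambda I-B)^{-1}g(s)\dd s .
\]
Choose $\hat\omega\in(\omega_0(B_0),\omega)$.

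\textbf{Step 3: Gronwall argument.} Since $F$ is $C^1$, the remainder satisfies $\|R(w)\|\le \varepsilon\|w\|$ whenever $\|w\|\le\delta(\varepsilon)$. As long as $\|w(s)\|\le\delta$, this gives
\[
e^{-\hat\omega t}\|w(t)\|\le M\|w(0)\|+\hat M\varepsilon\int_0^t e^{-\hat\omega s}\|w(s)\|\dd s,
\]
and Gronwall yields
\[
\|w(t)\|\le Me^{(\hat\omega+\hat M\varepsilon)t}\|w(0)\| .
\]
Pick $\varepsilon$ so small that $\hat\omega+\hat M\varepsilon\le\omega$. Then restrict to $\|w(0)\|\le \delta/M$. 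A continuity/bootstrap argument shows that $\|w(t)\|\le\delta$ holds for all $t$, because the bound keeps it strictly below $\delta$. This gives the claim with $\delta$ replaced by $\delta/M$.

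\textbf{Main obstacle.} The delicate point is Step 2. Because $B$ is not densely defined, $R(w)$ takes values in $X$ rather than $X_0$, so the usual semigroup convolution is not available. One must obtain the exponential bound for $\frac{\dd}{\dd t}(S_B\ast g)$ with the correct rate $\hat\omega$ arbitrarily close to $\omega_0(B_0)$, and not only with the Hille--Yosida constant of $B$. I would first try to get this by applying the Hille--Yosida estimate to the rescaled operator $B-\hat\omega I$ after renorming $X_0$ with an equivalent norm, namely $\sup_{t\ge0}\|e^{-\hat\omega t}T_{B_0}(t)x\|$. If that fails, I would cite \cite{MR872517,MR1073056} directly.
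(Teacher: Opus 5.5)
The paper gives no argument of its own for this proposition. It is quoted from \cite{MR872517,MR1073056}, and the route taken there differs from yours.

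In those works, one first shows that the nonlinear semiflow is Fr\'echet differentiable at the equilibrium, uniformly for $t$ in compact intervals, with derivative $T_{B_0}(t)$. One then applies an abstract linearized stability principle for nonlinear semigroups. Since $\|T_{B_0}(t)\|\le M'e^{\omega' t}$ for some $\omega'\in(\omega_0(B_0),\omega)$, one fixes $t_0$ with $\|T_{B_0}(t_0)\|<e^{\omega t_0}$, which gives $\|U(t_0)u-u^*\|\le e^{\omega t_0}\|u-u^*\|$ near $u^*$. This time-$t_0$ map is iterated, and intermediate times are controlled by the Lipschitz bound of the semiflow on $[0,t_0]$.

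Your proof instead works on the whole half-line with the variation of constants formula for $w'=Bw+R(w)$, closing with Gronwall and a bootstrap. It is correct.

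The trade-off is as follows:
\begin{itemize}
\item \emph{Your route} is self-contained and yields explicit constants. However, it requires the convolution estimate on all of $[0,\infty)$ with a rate $\hat\omega$ arbitrarily close to $\omega_0(B_0)$.
\item \emph{The cited route} only needs convolution estimates on a bounded interval, where any Hille--Yosida constants of $B$ suffice, because the exponential rate is extracted from $\|T_{B_0}(t_0)\|$. It also applies verbatim to any semiflow that is differentiable at the equilibrium. The same framework delivers the instability half of the subsequent corollary.
\end{itemize}

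Finally, the obstacle you single out is already removed by your own Step 2. With $\|T_{B_0}(t)\|\le Me^{\hat\omega t}$ and $M_B:=\limsup_{\lambda\to\infty}\|\lambda(\lambda I-B)^{-1}\|_{\mathcal{L}(X)}<\infty$, the approximation formula gives the required bound with $\hat M=MM_B$ for every $\hat\omega>\omega_0(B_0)$. So the renorming fallback is unnecessary.
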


\begin{corollary}
Let $F$ be continuously Fréchet differentiable in $X_0$ and let $u^*$ be a steady state. Suppose that $\omega_1((A+DF(v^*))_0)<0$. If all the eigenvalues of $A+DF(v^*)$ have strictly negative real part then there exist $\omega<0$, $M,\delta>0$ such that
\[
    \| U(t)u - u^* \| \leq Me^{\omega t}\|u - u^*\|,\quad \forall t\geq 0,
\]
for all $u\in X_0$ with $\|u-u^*\|\leq \delta$. On the other hand, if at least one eigenvalue of $A+DF(u^*)$ has strictly positive real part, then $u^*$ is an unstable steady state.
\end{corollary}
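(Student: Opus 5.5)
This corollary is the standard principle of linearised stability for non-densely defined semilinear problems, obtained by combining the preceding Proposition with the spectral theory of $C_0$-semigroups on $X_0$. Write $B := (A+DF(u^*))_0$ for the part in $X_0$; it generates a $C_0$-semigroup $\{T_B(t)\}_{t\geq0}$. This holds because $DF(u^*)\colon X_0\to X$ is a bounded perturbation of the Hille--Yosida operator $A$, and the perturbation results of \cite{MR3887640} keep $A+DF(u^*)$ Hille--Yosida.

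\textbf{Stability.} The key step is the growth bound formula
\[
\omega_0(B)=\max\{\omega_1(B),\, s(B)\},\qquad s(B):=\sup\{\Re\lambda : \lambda\in\sigma(B)\}.
\]
Moreover, every $\lambda\in\sigma(B)$ with $\Re\lambda>\omega_1(B)$ is an isolated eigenvalue of finite algebraic multiplicity, and only finitely many lie in any half-plane $\Re\lambda\geq \gamma>\omega_1(B)$. This is the classical Webb/Engel--Nagel result on the essential growth bound. Using these facts:
\begin{itemize}
\item Since $\omega_1(B)<0$, the spectrum in $\{\Re\lambda\geq \omega_1(B)/2\}$ consists of finitely many eigenvalues.
\item If all eigenvalues have negative real part, then $s(B)<0$, and so $\omega_0(B)<0$.
\item The preceding Proposition, applied with any $\omega\in(\omega_0(B),0)$, then gives exponential stability.
\end{itemize}
One bookkeeping point must be checked: the eigenvalues of $A+DF(u^*)$ coincide with those of its part $B$. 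If $(A+DF(u^*))x=\lambda x$ with $x\in D(A)$, then $x\in X_0$ and the image lies in $X_0$. Conversely, for $\lambda$ in the resolvent set of $A$, the resolvents of $A+DF(u^*)$ and $B$ are related by restriction. Hence $\sigma_p$ is the same for both operators.

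\textbf{Instability.} Suppose there is an eigenvalue with $\Re\lambda>0$. Then $\Re\lambda>\omega_1(B)$ as well, so $\lambda$ is isolated with finite multiplicity. Let $X_0=X_u\oplus X_s$ be the spectral decomposition into the finite-dimensional unstable part, with $\sigma(B|_{X_u})=\sigma(B)\cap\{\Re\lambda>0\}$, and the complementary part $X_s$, on which the growth bound is at most some $\gamma$ with $0\le\gamma<\min\Re\sigma(B|_{X_u})$.

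We then use the variation of constants formula for integrated solutions, $U(t)u-u^* = T_B(t)(u-u^*) + (S\diamond R(U(\cdot)u))(t)$. Here $R(v)=F(v)-F(u^*)-DF(u^*)(v-u^*)$ is $o(\|v-u^*\|)$, since $F$ is $C^1$. Projecting onto $X_u$ and arguing as in the finite-dimensional Lyapunov/Chetaev instability argument yields the contradiction. Concretely, suppose every solution starting within $\delta'$ of $u^*$ stayed within $\varepsilon$. Take initial data $u=u^*+\eta e$ with $e$ an unstable eigenvector. The unstable component grows like $e^{\Re\lambda t}$, while the nonlinear contribution is bounded by $o(1)$ times the $\sup$ of the solution along the orbit. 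This is done with the standard estimates on the convolution $S\diamond$ from \cite{MR3887640}, which are controlled by $\sup\|R\|$ on the projected subspaces. Together these force the orbit to exit the $\varepsilon$-ball, contradicting stability. Equivalently, one may simply cite the instability part of the linearised stability principle in \cite{MR872517,MR1073056}.

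\textbf{Main obstacle.} The delicate point is the instability half: handling the non-dense domain, where the convolution $S\diamond$ replaces the usual Duhamel integral, and obtaining uniform bounds on the projected convolutions. I would rely on the known estimates of Magal--Ruan for these rather than reprove them.
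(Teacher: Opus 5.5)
Your proposal is correct. It follows the standard argument: the preceding Proposition combined with $\omega_0(B)=\max\{\omega_1(B),s(B)\}$ gives stability, and a finite-dimensional unstable spectral projection with a cone (Chetaev-type) estimate on the $\diamond$-convolution gives instability. The paper itself gives no proof, simply quoting the result from \cite{MR872517,MR1073056}, and your argument is the one those references rely on.
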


It is easy to verify that the abstract problem \eqref{abs-prob} has exactly three equilibria
\[
    \overline{u}_0 = \begin{pmatrix}
        0 \\ 0
    \end{pmatrix}, \quad \overline{u}_1 = \begin{pmatrix}
        0 \\ \overline{\ph}_1
    \end{pmatrix}, \quad \text{and}\quad \overline{u}_2 = \begin{pmatrix}
        0 \\ \overline{\ph}_2
    \end{pmatrix},
\]
where $\overline{\ph}_i(a) = \kappa_i e^{-\int_0^a \mu(l) \dd l}$, $i\in\{1,2\}$. Regarding the local asymptotic stability of these steady states, we have the next result.

\begin{proposition}
The steady states $\overline{u}_0$ and $\overline{u}_2$ are locally asymptotically stable, while the intermediate equilibrium $\overline{u}_1$ is unstable.
\end{proposition}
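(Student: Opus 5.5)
The plan is to apply the Corollary on linearised stability to each steady state $\overline{u}_i$, $i\in\{0,1,2\}$. This needs two things: a bound on the essential growth bound of $(A+DF(\overline{u}_i))_0$, and a characteristic equation locating the eigenvalues. Throughout, write $\Pi(a):=e^{-\int_0^a \mu(l)\dd l}$, and note that $f'$ evaluated at $\int_0^{+\infty}\beta\overline{\ph}_i\,\dd a=\kappa_i$ (by \eqref{eq:norm}) is just $f'(\kappa_i)$.

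\textbf{Essential growth bound.} The operator $DF(\overline{u}_i)$ has rank one, since its range is contained in $\R\times\{0\}$. In particular it is compact from $X_0$ into $X$. By the perturbation results for non-densely defined Hille-Yosida operators (Thieme, Ducrot--Liu--Magal; see \cite{MR3887640}), a compact bounded perturbation does not change the essential growth bound. We therefore obtain
\[
\omega_1((A+DF(\overline{u}_i))_0)\le \omega_0(A_0)\le \esup(-\mu)\le -\unmu<0,
\]
using the bound $\|T_{A_0}(t)\|\le e^{\omega t}$ established earlier together with Assumption \ref{as-1}(ii).

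\textbf{Characteristic equation.} Let $\lambda\in\mathbb{C}$ with $\Re\lambda>-\unmu$, and look for a nonzero $\ph\in \W^{1,1}$ with $(A+DF(\overline{u}_i))\vcol{0\\ \ph}=\lambda\vcol{0\\ \ph}$. The second component gives $\ph'=-(\lambda+\mu)\ph$, so $\ph(a)=\ph(0)e^{-\lambda a}\Pi(a)$. The first component gives $\ph(0)=f'(\kappa_i)\int_0^{+\infty}\beta\ph\,\dd a$. Since $\ph\neq0$ forces $\ph(0)\neq0$, we arrive at
\[
\Delta_i(\lambda):=1-f'(\kappa_i)\int_0^{+\infty}\beta(a)\Pi(a)e^{-\lambda a}\,\dd a=0 .
\]
For $\Re\lambda\ge0$ we have $\bigl|\int_0^{+\infty}\beta\Pi e^{-\lambda a}\,\dd a\bigr|\le\int_0^{+\infty}\beta\Pi\,\dd a=1$ by \eqref{eq:norm}.

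\textbf{Stable equilibria ($i=0,2$).} Here $f'(\kappa_i)<1$ (with $\kappa_0=0$), so $|f'(\kappa_i)\int\beta\Pi e^{-\lambda a}|<1$ whenever $\Re\lambda\ge0$. Hence $\Delta_i$ has no roots in $\Re\lambda\ge0$. It remains to check that no root accumulates at $\Re\lambda=0$. The roots lying in $\{\Re\lambda>-\unmu\}$ are isolated with finite multiplicity, since beyond $\omega_1$ the spectrum consists of isolated eigenvalues. Moreover, in any strip $\Re\lambda\ge-\epsilon$ with $\epsilon$ small the roots stay in a bounded set, because $\int\beta\Pi e^{-\lambda a}\to0$ as $|\Im\lambda|\to\infty$ by Riemann--Lebesgue. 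Consequently the spectral bound is strictly negative, so $\omega_0<0$, and the Corollary gives local exponential stability. Alternatively, one can simply invoke the Corollary's hypothesis that all eigenvalues have negative real part, because the eigenvalues in $\Re\lambda>-\unmu$ are exactly the roots of $\Delta_i$.

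\textbf{Unstable equilibrium ($i=1$).} On real $\lambda\ge0$ the function $g(\lambda)=f'(\kappa_1)\int\beta\Pi e^{-\lambda a}\,\dd a$ is continuous and decreasing. It satisfies $g(0)=f'(\kappa_1)>1$ by \eqref{eq:norm}, and $g(\lambda)\to0$ as $\lambda\to\infty$ by dominated convergence. The intermediate value theorem therefore produces a real $\lambda_1>0$ with $\Delta_1(\lambda_1)=0$. This is an eigenvalue with positive real part, and the Corollary yields instability.

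\textbf{Main obstacle.} The delicate step is the essential growth bound, namely justifying that the compact rank-one perturbation preserves $\omega_1$ in the non-densely defined setting and that $\omega_1(A_0)<0$. I would cite the standard perturbation theorem from \cite{MR3887640} directly rather than reprove it.
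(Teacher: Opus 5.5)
Your proposal is correct and follows essentially the same route as the paper: the rank-one (hence compact) perturbation bound $\omega_1((A+DF(\overline{u}_i))_0)\le-\unmu$ via the Ducrot--Liu--Magal result, the characteristic equation $f'(\kappa_i)\int_0^{+\infty}\beta\Pi e^{-\lambda a}\,\dd a=1$, the modulus estimate ruling out roots with $\Re\lambda\ge0$ when $f'(\kappa_i)<1$, and the intermediate value theorem giving a positive real root when $i=1$. Your single estimate on $\Re\lambda\ge0$ replaces the paper's separate treatment of $\Re\lambda>0$ and $\Re\lambda=0$, and your check that roots cannot accumulate on the imaginary axis is a tidier, slightly more careful version of the same argument.
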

\begin{proof}
We start with the stability analysis for the zero equilibrium $\overline{u}_0$. Since the semigroup generated by $A_0$ satisfies
\[
    \| T_{A_0}(t) \| \leq e^{\omega t},
\]
where $\omega = \esup(-\mu)\leq -\underline{\mu} <0$ with $\mu$ as in Assumption \ref{as-1}-(ii),  then it follows that $\omega_1(A_0)\leq \omega_0(A_0) \leq \omega$. As $DF(\overline{u}_0)$ is a bounded compact linear operator, from Theorem 1.2 in \cite{MR2394101} we get that $\omega_1( (A+DF(\overline{u}_0))_0 )\leq -\underline{\mu}$. Therefore, it remains to study the point spectrum of $(A+DF(\overline{u}_0))_0$ in the half-plane $\{ \lambda \colon \Re(\lambda)> -\underline{\mu} \}$.

The linearised equation of \eqref{abs-prob} around the trivial equilibrium $\overline{u}_0$ is
\[
    \df{v(t)}{t} = Av(t) + DF(\overline{u}_0)v(t),
\]
which corresponds to the following PDE formulation
\begin{equation}\label{eq:linear-model}
    \begin{dcases}
    \partial_t u(t,a) + \partial_a u(t,a) = -\mu(a) u(t,a), & t>0,~a>0, \\
    u(t,0) = f'(0) \int_0^{+\infty} \beta(a) u(t,a) \dd a, & t>0, \\
    u(0,\cdot) = u_0 \in \L^1(0,+\infty).
    \end{dcases}
\end{equation}

Next, we seek exponential solutions $u(t,a) = e^{\lambda t}u(a)$ to derive the characteristic equation. This yields the following system
\[
    \begin{dcases}
        u'(a) = -(\lambda + \mu(a))u(a), & a>0, \\
        u(0) = f'(0) \int_0^{+\infty}\beta(a) u(a) \dd a,
    \end{dcases}
\]
and after solving the differential equation and replacing this solution in the boundary condition, we obtain the characteristic equation
\begin{equation}\label{char-eq}
    f'(0) \int_0^{+\infty} \beta(a) e^{-\lambda a} e^{-\int_0^a \mu(l) \dd l} \dd a = 1.
\end{equation}

If $f'(0)=0$ the set of characteristic values is vacuous, then we assume $f'(0)>0$. Let us suppose that $\lambda$ is a root of this equation with a positive real part, since $f'(0)<1$ it follows that
\[
    1 < \frac{1}{|f'(0)|} \leq \int_0^{+\infty} \beta(a) e^{-\Re(\lambda) a} e^{-\int_0^a \mu(l) \dd l} \dd a < \int_0^{+\infty} \beta(a) e^{-\int_0^a \mu(l) \dd l} \dd a = 1,  
\]
which is absurd. On the other hand, if $\lambda = i\xi$, $\xi\in \R$, is a solution to \eqref{char-eq} then by splitting in the real and imaginary parts we get
\[
    f'(0) \int_0^{+\infty} \beta(a) e^{-\int_0^a \mu(l) \dd l} \cos(\xi a) \dd a = 1,
\]
which leads us to a contradiction just as before.

The linearised equations of \eqref{abs-prob} around $\overline{u}_1$ and $\overline{u}_2$ are obtained in the same way, and they correspond to the same PDE formulation as in \eqref{eq:linear-model} changing $f'(0)$ by $f'(\kappa_i)$, with $i\in\{1,2\}$ as corresponds. Therefore, the characteristic equation for $\overline{u}_2$ is
\[
    f'(\kappa_2) \int_0^{+\infty} \beta(a) e^{-\lambda a} e^{-\int_0^a \mu(l) \dd l} \dd a = 1,
\]
and, due to $f'(\kappa_2)<1$, the stability of $\overline{u}_2$ follows by the same arguments as for the zero equilibrium.

On the other hand, the characteristic equation for $\overline{u}_1$ is
\[
    f'(\kappa_1) \int_0^{+\infty} \beta(a) e^{-\lambda a} e^{-\int_0^a \mu(l) \dd l} \dd a = 1.
\]
Note that the left-hand side of this equation defines a continuous and decreasing function $G \colon (-\underline{\mu},+\infty) \to \R$ such that $G(0)=f'(\kappa_1)>1$ and $G(+\infty) = 0$. Therefore, the characteristic equation has a unique positive real root, which implies the instability of $\overline{u}_1$.
\end{proof}

\subsection{Volterra formulation}
Besides the formulation presented in the previous sections, it is also possible to give an almost explicit representation of the solution to the Gurtin-MacCamy system. In fact, by solving the first order partial differential equation of \eqref{GM-model} by the method of the characteristics, we obtain the following formula for the solution:
\begin{equation}\label{charac-1}
    u(t,a) = \begin{dcases}
        e^{-\int_{a-t}^a \mu(l) \dd l} u_0(a-t), & \text{for}~a\geq t, \\
        e^{-\int_0^a \mu(l) \dd l} b(t-a), & \text{for}~ a\leq t,
    \end{dcases}
\end{equation}
where $b\colon \R_+ \to \R$ is the unique continuous solution to the nonlinear Volterra equation
\begin{equation}\label{eq:int_1}
    b(t) = f\left( \int_t^{+\infty} \beta(a) e^{-\int_{a-t}^a \mu(l) \dd l} u_0(a-t) \dd a + \int_0^t \beta(a)e^{-\int_0^a \mu(l) \dd l} b(t-a) \dd a \right).
\end{equation}

This representation is equivalent to
\begin{equation}\label{charac-2}
    u(t,a) = \begin{dcases}
        e^{-\int_{a-t}^a \mu(l) \dd l} u_0(a-t), & \text{for}~a\geq t, \\
        e^{-\int_0^a \mu(l) \dd l} f(B(t-a)), & \text{for}~ a\leq t,
    \end{dcases}
\end{equation}
where $B\colon \R_+ \to \R$ is the unique continuous solution to the nonlinear Volterra integral equation
\begin{equation}\label{eq:int_2}
    B(t) = \int_t^{+\infty} \beta(a) e^{-\int_{a-t}^a \mu(l) \dd l} u_0(a-t) \dd a + \int_0^t \beta(a)e^{-\int_0^a \mu(l) \dd l} f(B(t-a)) \dd a,
\end{equation}
and the relation is given by 
\begin{align*}
    b(t) &= f(B(t)), \\
    B(t) &= \int_t^{+\infty} \beta(a) e^{-\int_{a-t}^a \mu(l) \dd l} u_0(a-t) \dd a + \int_0^t \beta(a)e^{-\int_0^a \mu(l) \dd l} b(t-a) \dd a.
\end{align*}

Thus, in order to understand the long-term dynamics of the problem, we need to understand it for any of the two versions of the integral equations just shown. Equations of the form \eqref{eq:int_1}, as well as their integro-differential forms, appear recurrently in different scenarios of population dynamics and epidemiology, e.g. \cite{MR682251,MR312923,MR666008,MR1050319} for some examples. Concerned about the long-term behavior of the solutions to this equation, Londen and Levin, independently, in a series of works (see \cite{MR304994,MR458084} and the references therein), proved that under a monotonicity condition on the kernel and feeble conditions on the remaining parameters, all bounded solutions tend to converge towards the set of equilibrium points of the function $f$. Recently, Herrera and Trofimchuk \cite{MR4750985} studied this integral equation for the monostable case, obtaining conditions for absolute convergence of the solutions towards the nontrivial equilibrium (here, by absolute convergence we mean they are independent of the choice of the kernel). In this work we are concerned with the bistable case, but some of those results are still applicable.

The following proposition contains some basic properties of the solutions of \eqref{eq:int_1}.

\begin{proposition}\label{prop:basics}
Let Assumption \ref{as-1} be satisfied. Then there exists a unique continuous solution $b\colon \R_+ \to \R$ of equation \eqref{eq:int_1}, and this solution can be obtained by the method of successive approximations; that is, given any $b_0\in C(\R_+)$, the solution $b$ results from the limit of the iterative schema $b_{n+1} = \AA(b_n)$, where $\AA\colon C(\R_+) \to C(\R_+)$ is the operator defined by the right-hand side of \eqref{eq:int_1}; namely,
\[
    \AA(b)(t) = f\left( \int_t^{+\infty} \beta(a) e^{-\int_{a-t}^a \mu(l) \dd l} u_0(a-t) \dd a + \int_0^t \beta(a) e^{-\int_0^a \mu(l) \dd l} b(t-a) \dd a \right).
\]
Moreover, the solution $b$ is uniformly continuous on $\R_+$. 

Also, we have continuous dependence on the initial data in the following sense: given $\varepsilon>0$, $u_0\in \L^1_+(0,+\infty)$ and $T>0$, then there exists $\delta>0$ such that for every $v_0\in \L^1_+(0,+\infty)$ with $\|u_0-v_0\| < \delta$, for the associated solutions $b^u, b^v$ of \eqref{eq:int_1} it holds that $|b^v(t) - b^u(t)| < \varepsilon$ for every $t\in [0,T]$.
\end{proposition}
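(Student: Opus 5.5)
We write $\AA$ as $\AA(b)(t) = f\big(g(t) + (Kb)(t)\big)$. Here the forcing term and the kernel are
\[
g(t) = \int_t^{+\infty} \beta(a) e^{-\int_{a-t}^a \mu(l)\dd l} u_0(a-t)\dd a, \qquad k(a) = \beta(a)e^{-\int_0^a\mu(l)\dd l},
\]
and $(Kb)(t)=\int_0^t k(a) b(t-a)\dd a$. First we record two elementary facts. The first is that $0\le g(t)\le \ovbeta\,\|u_0\|_{\L^1}$. The second is that $g$ is uniformly continuous on $\R_+$: after the substitution $s=a-t$ we have $g(t)=\int_0^{+\infty}\beta(s+t)e^{-\int_s^{s+t}\mu}u_0(s)\dd s$. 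Uniform continuity then follows from continuity of translations in $\L^1$, after approximating $u_0$ by continuous compactly supported functions and using the Lipschitz dependence of $e^{-\int_s^{s+t}\mu}$ on $t$ (since $\mu\in\L^\infty$). Note also that $k\in\L^1\cap\L^\infty$ with $\|k\|_{\L^1}=1$ by \eqref{eq:norm}.

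\textbf{Existence, uniqueness and successive approximations.} Let $L=\sup|f'|$. On $C([0,T])$ we use the weighted norm $\|b\|_\gamma=\sup_{t\le T}e^{-\gamma t}|b(t)|$. The estimate
\[
|\AA b_1(t)-\AA b_2(t)|\le L\int_0^t k(a)|b_1-b_2|(t-a)\dd a\le L\ovbeta\,\gamma^{-1}e^{\gamma t}\|b_1-b_2\|_\gamma
\]
shows that $\AA$ is a contraction for $\gamma>L\ovbeta$. The constant $\gamma$ does not depend on $T$. Hence for every $T$ and every starting point $b_0$, the Picard iterates converge uniformly on $[0,T]$ to the unique fixed point. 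These fixed points are consistent across different $T$ by uniqueness, so they define a unique continuous $b$ on $\R_+$, and $b_n\to b$ locally uniformly. Continuity of $\AA(b)$ comes from continuity of $g$ and of $t\mapsto (Kb)(t)$; the latter uses $k\in\L^1$ and continuity of translations.

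\textbf{Boundedness and uniform continuity.} This is the main step. We first show $b$ is bounded. By \eqref{diag_cond} there are $\theta<1$ and $C>0$ with $f(x)\le \theta x + C$ for all $x\ge0$. Positivity of $b$ holds because $f\ge0$. Set $M(t)=\max_{[0,t]}b$. Since $\int_0^t k\le1$, we get
\[
b(t)\le \theta\big(\ovbeta\|u_0\|+M(t)\big)+C,
\]
and taking the maximum gives $M(t)\le(\theta\ovbeta\|u_0\|+C)/(1-\theta)$ uniformly in $t$.

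Uniform continuity then follows, since $f$ is Lipschitz. For $h>0$ we have
\[
|(Kb)(t+h)-(Kb)(t)|\le \|b\|_\infty\Big(\int_t^{t+h}k+\int_0^{+\infty}|k(a+h)-k(a)|\dd a\Big)\le \|b\|_\infty\big(\ovbeta h+\|k(\cdot+h)-k\|_{\L^1}\big),
\]
which tends to $0$ uniformly in $t$. Together with the uniform continuity of $g$, this gives the claim.

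\textbf{Continuous dependence.} Let $g^u,g^v$ be the forcing terms built from $u_0,v_0$. Then $|g^u(t)-g^v(t)|\le\ovbeta\|u_0-v_0\|$ for all $t$. It follows that
\[
|b^u(t)-b^v(t)|\le L\ovbeta\|u_0-v_0\|+L\ovbeta\int_0^t|b^u-b^v|(s)\dd s.
\]
Gronwall's inequality gives $|b^u-b^v|\le L\ovbeta\, e^{L\ovbeta T}\|u_0-v_0\|$ on $[0,T]$, so we may take $\delta=\varepsilon e^{-L\ovbeta T}/(L\ovbeta)$.

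The only delicate points are the uniform-in-time bound, which uses \eqref{diag_cond}, and the uniform continuity of $g$, which is handled by $\L^1$ density.
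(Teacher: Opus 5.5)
Your proof is correct and follows essentially the same route as the paper's sketch: you run a contraction argument for $\AA$ in an exponentially weighted sup-norm, obtain uniform continuity from the regularizing effect of convolution with $k\in\L^1$ together with uniform continuity of the forcing term, and apply Gr\"onwall's inequality for continuous dependence. You also make explicit the uniform-in-time bound on $b$ coming from \eqref{diag_cond}, which the paper's appeal to convolution regularity tacitly needs but only proves later, in Theorem \ref{th:bound}.
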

\begin{proof}
The existence of the solution follows from a straightforward application of the contraction mapping principle considering the operator $\AA$ in an appropriate weighted space of continuous functions; see for instance \cite{MR4750985,Ruiz} or Chapter 2 in \cite{MR4461043}. The uniform continuity follows from the regularizing properties of the convolution operation and the continuous dependence by a straightforward application of the Grönwall's inequality.
\end{proof}

Moreover, the subdiagonal condition given in Assumption \ref{as-1} yields the uniform boundedness of the solutions (c.f. \cite{MR361694}).

\begin{theorem}\label{th:bound}
Let Assumption \ref{as-1} be satisfied. The solutions of the equation are uniformly ultimately bounded; that is, there exists $M>0$ such that for every function $u_0 \in \L^1_+(0,+\infty)$, the associated solution $b = b(\cdot; u_0)$ of \eqref{eq:int_1} satisfies
\[
    \limsup_{t\to \infty} b(t) \leq M.
\]
More precisely, let $b^* = \limsup_{t\to \infty} b(t)$, and $b_* = \liminf_{t\to \infty} b(t)$, then
\[
    b^* \in \{0\}\cup [\kappa_1,\kappa_2] \quad \text{and} \quad b_* \in [0,\kappa_1] \cup \{\kappa_2\}.
\]
Additionally, there are constants $C_1,C_2>0$ depending only on $f$ and $\beta$ such that
\[
    B(t) \leq C_1 + C_2 \|u_0\|, \quad t\geq 0.
\]
\end{theorem}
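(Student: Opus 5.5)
We prove Theorem \ref{th:bound} in three steps. First a linear bound on $B$ (the explicit estimate). Then the cruder ultimate bound $\limsup b\le M$. Finally the location of $b^*$ and $b_*$ via a limsup/liminf argument in the spirit of Londen and Levin, as used in \cite{MR4750985}.

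\textbf{Step 1: the linear bound on $B$.} Write $k(a)=\beta(a)e^{-\int_0^a\mu(l)\dd l}$, so $\int_0^\infty k=1$ by \eqref{eq:norm}. The forcing term satisfies
\[
    h(t):=\int_t^{+\infty}\beta(a)e^{-\int_{a-t}^a\mu}u_0(a-t)\dd a\le \ovbeta e^{-\unmu t}\|u_0\|.
\]
By \eqref{diag_cond}, monotonicity and continuity of $f$, there are $\theta\in(0,1)$ and $K>0$ with $f(x)\le K+\theta x$ for all $x\ge0$.

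Let $\bar B(T)=\max_{[0,T]}B$. From \eqref{eq:int_2} we get
\[
    B(t)\le \ovbeta\|u_0\|+\int_0^t k(a)\bigl(K+\theta B(t-a)\bigr)\dd a\le \ovbeta\|u_0\|+K+\theta\bar B(T),
\]
for all $t\le T$. Taking the maximum over $t\le T$ gives $\bar B(T)\le (K+\ovbeta\|u_0\|)/(1-\theta)$, uniformly in $T$. This is the last claim with $C_1=K/(1-\theta)$ and $C_2=\ovbeta/(1-\theta)$.

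\textbf{Step 2: the ultimate bound.} Since $B$ is bounded, so is $b=f(B)$. Set $B^\infty=\limsup B$. Because $h(t)\to0$, $k\in\L^1$ and $B$ is bounded, a standard dominated-convergence (Fatou-type) argument on the convolution gives $B^\infty\le\int k\cdot f(B^\infty)=f(B^\infty)$, using that $f$ is increasing. Combined with $f(x)\le K+\theta x$, this forces $B^\infty\le K/(1-\theta)$. Hence $\limsup b\le M:=f(K/(1-\theta))$, independently of $u_0$.

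\textbf{Step 3: locating $b^*$ and $b_*$.} Write $b^*=f(B^*)$ and $b_*=f(B_*)$ with $B^*=\limsup B$ and $B_*=\liminf B$, using continuity and monotonicity of $f$. The same limiting argument gives the pair of inequalities
\[
    B^*\le f(B^*),\qquad B_*\ge f(B_*).
\]
From the shape of $f$ in Assumption \ref{as-1}(iii), $f(x)\ge x$ exactly on $\{0\}\cup[\kappa_1,\kappa_2]$, and $f(x)\le x$ exactly on $[0,\kappa_1]\cup[\kappa_2,\infty)$. So $B^*\in\{0\}\cup[\kappa_1,\kappa_2]$, and since $f$ is increasing and fixes $0,\kappa_1,\kappa_2$, also $b^*\in\{0\}\cup[\kappa_1,\kappa_2]$.

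For the liminf we get $B_*\in[0,\kappa_1]\cup[\kappa_2,\infty)$, and $B_*\le B^*\le\kappa_2$ then gives $B_*\in[0,\kappa_1]\cup\{\kappa_2\}$. The same set is obtained for $b_*$.

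\textbf{Main obstacle.} The delicate point is the limit argument $\limsup_t\int_0^t k(a)f(B(t-a))\dd a\le f(B^*)$. I would handle it by choosing $T_\varepsilon$ with $B\le B^*+\varepsilon$ on $[T_\varepsilon,\infty)$. Then I split the integral into $a\le t-T_\varepsilon$, bounded by $f(B^*+\varepsilon)\int k$, and $a>t-T_\varepsilon$, which vanishes as $t\to\infty$ because $B$ is bounded and $\int_{t-T_\varepsilon}^\infty k\to0$. Letting $\varepsilon\to0$ gives the inequality. The liminf inequality follows symmetrically.
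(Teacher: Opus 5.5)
Your proposal is correct and follows the paper's proof essentially step by step. You use the same a priori estimate of $\max_{[0,T]}B$ via the subdiagonal condition; your affine bound $f(x)\le K+\theta x$ is a compact form of the paper's splitting over $\{B>M\}$. You also use the same $\varepsilon$--$T_\varepsilon$ splitting of the convolution to obtain $B^*\le f(B^*)$ and $B_*\ge f(B_*)$, and you additionally make explicit two points the paper leaves implicit: that $B_*\le B^*\le\kappa_2$ excludes $(\kappa_2,\infty)$ for the liminf, and that monotonicity and continuity of $f$ transfer the conclusions from $B$ to $b$.
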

\begin{proof}
We will prove the result for the equation \eqref{eq:int_2} for simplicity. First, let $\rho\in (0,1)$ be such that
\[
    \limsup_{x\to \infty} \frac{f(x)}{x} \leq \rho,
\]
then there exists $M>0$ such that $f(x)\leq \rho x$ for every $x\geq M$. Now, let us start by showing that every solution of \eqref{eq:int_2} is bounded. To accomplish this, let $T>0$ be any positive number and $B$ be any solution of \eqref{eq:int_2}. Define the set $K= \{ t\geq 0 \colon B(t) > M\}$, and note that for $t\in [0,T]$ there holds
\begin{align*}
    B(t) & \leq \ovbeta e^{-\unmu t} \|u_0\|_{\L^1} + \left( \int_{[0,t]\cap K^c} + \int_{[0,t] \cap K} \right) \beta(t-a) e^{-\int_0^{t-a} \mu(l) \dd l} f(B(a)) \dd a \\
    & \leq \ovbeta \|u_0\|_{\L^1} + \left[ f(M) + \rho \max_{t\in[0,T]} B(t) \right] \int_0^t \beta(t-a) e^{-\int_0^{t-a} \mu(l) \dd l} \dd a.
\end{align*}
As the last integral is bounded from above by 1 thanks to the normalization hypothesis \eqref{eq:norm}, and this holds for every $t\in [0,T]$, we infer
\[
    \max_{t\in [0,T]} B(t) \leq \ovbeta \|u_0\|_{\L^1} + f(M) + \rho \max_{t\in [0,T]} B(t),
\]
or equivalently
\[
    \max_{t\in[0,T]} B(t) \leq \frac{\ovbeta \|u_0\|_{\L^1} + f(M)}{1-\rho}.
\]
Since the right-hand side in the last expression is independent of $T$, the boundedness of the solution follows.

Now, let $B^* = \limsup_{t\to \infty} B(t)$, $\varepsilon>0$ and consider $T_\varepsilon>0$ such that $B(t) \leq B^* + \varepsilon$ for every $t\geq T_\varepsilon$. Additionally, take $(t_n)_{n\in \N}$ a sequence of real numbers such that $t_n\to \infty$ and $B(t_n) \to B^*$. Then, for any $n$ large enough we have
\begin{align*}
    B(t_n) &\leq \ovbeta e^{-\unmu t_n} \|u_0\|_{\L^1} + \left( \int_0^{t_n-T_\varepsilon} + \int_{t_n-T_\varepsilon}^{t_n} \right) \beta(a) e^{-\int_0^a \mu(l) \dd l} f(B(t_n-a)) \dd a \\
    & \leq \ovbeta e^{-\unmu t_n} \| u_0 \|_{\L^1} + f(B^* + \varepsilon) \int_0^{t_n-T_\varepsilon} \beta(a) e^{-\int_0^a \mu(l) \dd l} \dd a + K \int_{t_n - T_\varepsilon}^{t_n} \beta(a) e^{-\int_0^a \mu(l) \dd l} \dd a \\
    &\leq \ovbeta e^{-\unmu t_n} \| u_0 \|_{\L^1} + f(B^* + \varepsilon) \int_0^{t_n-T_\varepsilon} \beta(a) e^{-\int_0^a \mu(l) \dd l} \dd a + K\ovbeta \int_{t_n - T_{\varepsilon}}^{t_n} e^{-\unmu a} \dd a, 
\end{align*}
where $K$ is some bound for $f(B(t))$. Then, taking $n\to \infty$ we deduce that $B^*\leq f(B^* + \varepsilon)$ and therefore, since $\varepsilon$ is arbitrary, it follows that $B^* \leq f(B^*)$. Because of the geometry of the function $f$ and the last inequality, we infer that $B^* \in \{0\} \cup [\kappa_1, \kappa_2]$ as stated. 

For the result concerned about the limit inferior, let $B_* = \liminf_{t\to\infty} B(t)$, $\varepsilon>0$ be given and $T_\varepsilon>0$ such that $B(t) \geq B_* - \varepsilon$ for every $t\geq T_\varepsilon$. Taking a sequence of times $(t_n)_{n\in \N}$ such that $B(t_n)$ tends to $B_*$, then just as before, we get
\[
    B(t_n) =\int_{t_n}^{+\infty} \beta(a) e^{-\int_{a-t}^a \mu(l) \dd l} u_0(a-t) \dd a + \left( \int_0^{t_n-T_\varepsilon} + \int_{t_n - T_\varepsilon}^{t_n} \right) \beta(a) e^{-\int_0^a \mu(l) \dd l} f(B(t_n - a)) \dd a.
\]
The first and third terms in the last expression go to 0 as $n\to \infty$ in the same fashion as before, whereas the second term is bounded below by $f(B_* - \varepsilon) \int_0^{t_n -T_\varepsilon} \beta(a) e^{-\int_0^a \mu(l) \dd l} \dd a$. Consequently, after passing to the limit, we obtain that $B_* \geq f(B_*)$, and the conclusion follows because of the geometry of the function $f$.
\end{proof}

\subsection{Comparison principle}

There exist some results concerning the comparison principle in the Gurtin-MacCamy model; see, e.g., \cite{MR3987042,Ruiz}. For the aim of our purposes, we establish a simpler version of the comparison principle whose proof follows immediately from the successive approximation method to construct the solutions of the integral equation. Mainly we have:

\begin{proposition}(Comparison principle)
Let Assumption \ref{as-1} be satisfied. If $u_0,v_0\in \L^1_+(0,+\infty)$ are such that $u_0 \leq v_0$ a.e, then the associated solutions to the integral equation $b^u = b(\cdot ; u_0)$ and $b^v = b(\cdot ; v_0)$ verify
\[
    b_u(t) \leq b_v(t), \quad \forall t\geq 0.
\]
\end{proposition}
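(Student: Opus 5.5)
The plan is to exploit the fact that the operator $\AA$ of Proposition \ref{prop:basics} is order preserving, together with the convergence of successive approximations from an arbitrary starting point. Write $\AA_u$ and $\AA_v$ for the operators built from $u_0$ and $v_0$, respectively.

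First, observe that $\AA_u(c) \leq \AA_v(d)$ on $\R_+$ whenever $c,d\in C(\R_+)$ satisfy $c\leq d$ pointwise. Indeed, the argument of $f$ in $\AA_u(c)(t)$ is
\[
    \int_t^{+\infty} \beta(a) e^{-\int_{a-t}^a \mu(l) \dd l} u_0(a-t) \dd a + \int_0^t \beta(a)e^{-\int_0^a \mu(l)\dd l} c(t-a)\dd a .
\]
Both $\beta$ and the exponential weights are nonnegative, and $u_0\leq v_0$ a.e. Hence each of the two integrals is bounded above by the corresponding one with $v_0$ and $d$. Since $f'>0$ by Assumption \ref{as-1}(iii), $f$ is increasing, and the inequality passes through $f$.

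Second, start both iteration schemes from the same function, say $b_0\equiv 0$ (any common $b_0\in C(\R_+)$ would do). Set $b^u_{n+1}=\AA_u(b^u_n)$ and $b^v_{n+1}=\AA_v(b^v_n)$. By induction, the first step gives $b^u_n\leq b^v_n$ pointwise for every $n$. By Proposition \ref{prop:basics}, $b^u_n\to b^u$ and $b^v_n\to b^v$. I expect this convergence to hold in a weighted sup norm, and therefore pointwise, or at least locally uniformly. Passing to the limit in $b^u_n(t)\leq b^v_n(t)$ for each fixed $t$ yields $b^u(t)\leq b^v(t)$ for all $t\geq 0$.

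The only point needing care is the mode of convergence. The contraction argument is carried out in a weighted space such as $\{b: \sup_t e^{-\gamma t}|b(t)|<\infty\}$ with $\gamma$ large. Convergence in that norm implies uniform convergence on every $[0,T]$, which is enough to preserve a pointwise inequality. If needed, one can avoid the weighted space entirely by working on $[0,T]$ with $T$ arbitrary. There, $\AA$ is a contraction after finitely many iterates, because its Lipschitz constant is controlled by $\|f'\|_\infty \ovbeta$ through a Volterra kernel, whose iterates are bounded by $(\|f'\|_\infty\ovbeta T)^n/n!$.

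Finally, the same inequality transfers to $u$ through the characteristics formula \eqref{charac-1}, since both branches of that formula are monotone in $(u_0,b)$. This also gives $u^u(t,\cdot)\leq u^v(t,\cdot)$ a.e.
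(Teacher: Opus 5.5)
Your proof is correct and is essentially the paper's own monotone-iteration argument. The paper notes $b^u=\AA_u(b^u)\le \AA_v(b^u)$, iterates $\AA_v$ starting from $b^u$ itself to get $b^u\le \AA_v^n(b^u)$ for all $n$, and concludes from the convergence of successive approximations in Proposition \ref{prop:basics}; you instead run two parallel iterations from the common start $b_0\equiv 0$, which is only a cosmetic difference, and your remark that the convergence is locally uniform (so the pointwise inequality passes to the limit) fills in a detail the paper leaves implicit.
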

\begin{proof}
Let $u_0, v_0\in \L^1_+(0,+\infty)$ be given and such that $u_0 \leq v_0$ a.e. Denote $b^u$ and $b^v$ to the solutions of \eqref{eq:int_1} corresponding to $u_0$ and $v_0$, respectively. Thus, from the monotonicity assumption over $f$, it follows that for each $t\geq 0$
\begin{align*}
    b^u(t) &= \AA_u(b^u)(t) \\
    &= f\left( \int_t^{+\infty} \beta(a) e^{-\int_{a-t}^a\mu(l) \dd l} u_0(a-t) + \int_0^t \beta(a) e^{-\int_0^a \mu(l) \dd l} b^u(t-a) \dd a \right) \\
    &\leq f\left( \int_t^{+\infty} \beta(a) e^{-\int_{a-t}^a\mu(l) \dd l} v_0(a-t) + \int_0^t \beta(a) e^{-\int_0^a \mu(l) \dd l} b^u(t-a) \dd a \right) = \AA_v(b^u)(t).
\end{align*}
Here, we have used the subscript in the operator $\AA$ to denote its dependence on the initial distributions $u_0$ and $v_0$. Using the last inequality we get by induction that $b^u \leq \AA_v^n(b^u)$ for every $n\in \N$, and hence the conclusion follows from Proposition \ref{prop:basics}.
\end{proof}

This principle can be used to establish the asymptotic behavior of some particular solutions to the integral equation. In particular, due to the monotonocity and the one-dimensional dynamics of the map $f$, one expects that the solutions starting from below or above the unstable equilibrium will converge to the trivial or to the stable nontrivial equilibrium, respectively. As a matter of fact, this observation can be proved from the comparison principle.

\begin{corollary}\label{cor:conv}
Let Assumption \ref{as-1} be satisfied. Assume in addition that $u_0\in \L^1_+(0,+\infty)$ is such that
\[
    u_0(a) \geq (\leq) \kappa_1 e^{-\int_0^a \mu(l) \dd l}, \quad \text{a.e. for}~a\geq 0,
\]
and such that $\supp u_0 \cap [0,a^*] \neq \emptyset$. Then, the corresponding solution of the system \eqref{GM-model} satisfies $u(t,\cdot) \to \overline{\varphi_2}$ ($0$) in $\L^1(0,+\infty)$. 
\end{corollary}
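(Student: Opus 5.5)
By the comparison principle we have $u_0 \ge \overline{\varphi}_1$ a.e., so the solution with initial datum $\overline{\varphi}_1$ (which is the steady state, with $b \equiv \kappa_1$) gives $b^u(t)\ge \kappa_1$ for all $t \geq 0$. Hence $b_* = \liminf_{t\to\infty} b(t) \ge \kappa_1$. Theorem \ref{th:bound} states $b_*\in[0,\kappa_1]\cup\{\kappa_2\}$, so either $b_*=\kappa_2$ or $b_*=\kappa_1$. The same theorem gives $b^*\le \kappa_2$. In the first case $b(t)\to\kappa_2$ directly. The whole task is to rule out $b_*=\kappa_1$, and this is where the support hypothesis is used. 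The case $u_0\le\overline{\varphi}_1$ is symmetric: we get $b^*\in\{0\}\cup[\kappa_1,\kappa_2]$ and $b^* \leq \kappa_1$, and we must exclude $b^*=\kappa_1$.

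To exclude the degenerate case, we first show that strict inequality propagates. Write $B^u$ for the argument of $f$ in \eqref{eq:int_1}, and set $w=u_0-\overline{\varphi}_1\ge0$. Using $b^u\ge\kappa_1$, we get
\[
B^u(t)-\kappa_1 \ge \int_t^{+\infty}\beta(a)e^{-\int_{a-t}^a\mu(l)\dd l}w(a-t)\dd a+\int_0^t\beta(a)e^{-\int_0^a\mu(l)\dd l}\big(b^u(t-a)-\kappa_1\big)\dd a .
\]
If $w$ has positive mass on $[0,a^*)$ (the meaning I take for the support hypothesis), then the first term is positive for $t$ in some nontrivial interval. Since $f'>0$, this gives $b^u>\kappa_1$ there. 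The renewal term then spreads this positivity forward. Combined with compactness via uniform continuity (Proposition \ref{prop:basics}), I expect to find some $T$ and $\varepsilon>0$ with $b^u(t)\ge\kappa_1+\varepsilon$ on a window $[T,T+a^*]$, or at least on a window long enough that the kernel sees it.

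Next I construct a subsolution. Define a new initial datum at time $T'$ (a shift of the semiflow) that dominates $(\kappa_1+\varepsilon')e^{-\int_0^a\mu}$ on $[0,a^*]$. Ages beyond $a^*$ do not affect births, so only the restriction to $[0,a^*]$ matters. This is the step where $a^*<\infty$ is genuinely helpful, though for the corollary as stated one can instead compare with the solution whose birth values lie in $[\kappa_1+\varepsilon', \cdot)$ on a full kernel window. Then show the solution with constant births $c_0=\kappa_1+\varepsilon'$ is increasing: $B(t)\ge c_0$ so $b\ge f(c_0)>c_0$, and iterating via the comparison principle gives $b\ge f^n(c_0)$ eventually, modulo tail errors $e^{-\unmu t}$. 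Since $f^n(c_0)\to\kappa_2$, we get $b_*=\kappa_2$.

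The main obstacle is making the comparison window rigorous when $a^*=\infty$: the tail of the kernel beyond the window, weighted by the lower bound $\kappa_1$, costs a little. I would handle this by choosing $\varepsilon'$ smaller than the gain $f(c)-c$ over a compact range, absorbing the kernel-tail deficit. Finally, $b(t)\to\kappa_2$ converts to $L^1$ convergence via \eqref{charac-1}. The contribution from ages $a\ge t$ is bounded by $e^{-\unmu t}\|u_0\|$. The contribution from ages $a\le t$ is controlled by dominated convergence with weight $e^{-\unmu a}$.
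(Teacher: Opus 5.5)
You read the support hypothesis as \emph{$u_0-\overline{\varphi}_1$ has positive mass on $(0,a^*)$}, and that is the right reading. Taken literally, $\supp u_0\cap[0,a^*]\neq\emptyset$ is automatic when $u_0\ge\overline{\varphi}_1$, and $u_0=\overline{\varphi}_1$ would contradict the statement; the paper's proof also needs your reading when it says $\xi$ is a nonzero solution. Your route differs from the paper's. The paper sets $\xi=b-\kappa_1$ and $g(x)=f(x+\kappa_1)-\kappa_1$, notes that $g$ is monostable on $\R_+$, and imports the uniform persistence theorem of \cite{MR4750985} to get $\liminf_{t\to\infty}\xi(t)>0$. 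Theorem \ref{th:bound} then finishes. You instead do the persistence step by hand, via positivity propagation plus a subsolution. For $a^*<\infty$ this works as you describe: once $b\ge\kappa_1+\varepsilon$ on a window of length $a^*$, the normalization \eqref{eq:norm} and $f(c)>c$ on $(\kappa_1,\kappa_2)$ keep $b\ge\kappa_1+\varepsilon$ forever. Your iteration $f^n(c_0)\to\kappa_2$ is superfluous, though: $b_*\ge\kappa_1+\varepsilon'$ already forces $b_*=\kappa_2$ by Theorem \ref{th:bound}.

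The gap is the case $a^*=+\infty$. The corollary covers it, since it is stated under Assumption \ref{as-1} alone, and its argument is reused in Section \ref{sec:non_compact}. Your proposed fix is to take $\varepsilon'$ smaller than the gain $f(c)-c$ over a compact range, and this does not close the argument. Write $\gamma(a)=\beta(a)e^{-\int_0^a\mu(l)\dd l}$. At the level $c=\kappa_1+\varepsilon'$ the gain is $(f'(\kappa_1)-1)\varepsilon'+o(\varepsilon')$, while the tail deficit in the argument of $f$ is $\varepsilon'\bigl(1-\int_0^L\gamma\bigr)$. Both are linear in $\varepsilon'$, so shrinking $\varepsilon'$ absorbs nothing. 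A gain bounded below on a compact subset of $(\kappa_1,\kappa_2)$ does not help either, since you only know $b\ge\kappa_1+\varepsilon$ for some small unknown $\varepsilon$.

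The missing ingredient is $f'(\kappa_1)>1$, which your proposal never uses. If $b\ge\kappa_1+\varepsilon'$ on $[t-L,t]$ and $b\ge\kappa_1$ elsewhere, then $b(t)\ge f\bigl(\kappa_1+\varepsilon'\int_0^L\gamma\bigr)=\kappa_1+f'(\kappa_1)\bigl(\int_0^L\gamma\bigr)\varepsilon'+o(\varepsilon')$. So you must proceed in this order:
first fix $L$ with $f'(\kappa_1)\int_0^L\gamma>1$;
then obtain $\varepsilon_L>0$ from positivity on $[T,T+L]$, which means your spreading step must really give $b>\kappa_1$ for all large $t$ (e.g.\ via Corollary B.6 of \cite{MR2731633});
only then take $\varepsilon'\le\varepsilon_L$ small and run a continuation argument.

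A simpler fix: once $\xi=b-\kappa_1>0$ on $[t_0,\infty)$, the minimum device of Proposition \ref{unord_S} suffices and needs no window. Suppose $\liminf\xi=0$. Pick $t_n\to\infty$ with $\xi(t_n)=\min_{[t_0,t_n]}\xi\to0$, and $\rho\in(1,f'(\kappa_1))$. For large $n$ the quantity $B(t_n)-\kappa_1$ is small, where $B$ is the argument of $f$, so $g(x)\ge\rho x$ applies. This gives $\xi(t_n)\ge\rho\,\xi(t_n)\int_0^{t_n-t_0}\gamma$, which contradicts \eqref{eq:norm} as $n\to\infty$. The case $u_0\le\overline{\varphi}_1$ is symmetric.
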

\begin{proof}
Let us denote by $b$ the corresponding solution to the equation \eqref{eq:int_1} associated with the given initial function $u_0$. Using the changes of variables $\xi(t) = b(t)- \kappa_1$, $g(x) = f(x+\kappa_1)-\kappa_1$, and $\eta_0(a) = u_0(a) - \kappa_1 e^{-\int_0^a \mu(l) \dd l}$ we observe that $\xi$ is a continuous solution of the following equation
\[
    \xi(t) = g\left( \int_t^{+\infty} \beta(a) e^{-\int_{a-t}^a \mu(l) \dd l} \eta_0(a-t) \dd a + \int_0^t \beta(a) e^{-\int_0^a \mu(l) \dd l} \xi(t-a) \dd a \right).
\]
Furthermore, $g$ has a monostable structure  and, due to the comparison principle, $\xi$ is a nonzero solution. Then, the claim of the Theorem follows from the uniform persistence result from \cite{MR4750985} (see Theorem 1 therein) and Theorem \ref{th:bound}. The other case is analogous. 
\end{proof}

\section{The threshold set}\label{sec:threshold}

Observe that the integral equation \eqref{eq:int_1} can be rewritten as
\[
    b(t) = f\left( \int_0^{+\infty} \beta(a) e^{-\int_0^a \mu(l) \dd l} b(t-a) \dd a \right),
\]
where we have defined
\[
    b(t) = e^{\int_0^{-t} \mu(l) \dd l} u_0(-t), \quad \text{for}~t<0.
\]
Hereinafter, given $t\in \R$ and $\varphi\colon \R \to \R$ we denote by $\varphi_t$ to the history of the function $\varphi$ defined by $\varphi_t(s) = \varphi(t+s)$, $s<0$; the domain of the parameters will be clear from the context.

Now, as $\beta$ vanishes for values greater than $a^*$ (see \eqref{def:a*}), the previous equation reduces to 
\begin{equation}\label{eq:int_3}
    b(t) = f\left( \int_0^{a^*} \beta(a)e^{-\int_0^a \mu(l) \dd l} b(t-a) \dd a  \right), \quad t\geq 0,
\end{equation}
and therefore the initial values of $b$ need to be specified on $(-a^*,0)$. Let us suppose that $a^*<+\infty$ and set $C_+^0 := C_+^0([-a^*,0])$ the set of non-negative continuous functions. Given a function $\varphi \in C_+^0$ and taking $b$ as the solution to the associated equation, we already know that b is a continuous function on $\R_+$, but note that 
\[
    b(0) = f\left( \int_0^{a^*} \beta(a) e^{-\int_0^a \mu(l) \dd l} \varphi(-a) \dd a \right) \neq \varphi(0),
\]
thus, the function $b^\varphi \colon (-a^*,+\infty) \to \R_+$ defined by 
\[
    b(t) = \begin{cases}
        b(t), & t\geq 0, \\
        \varphi(t), & t\in(-a^*,0),
    \end{cases}
\]
may have a discontinuity at zero, and therefore it is not possible to define a semiflow on the whole space $C([-a^*,0])$. Nevertheless, taking the following closed subset $X\subseteq C_+^0$ as
\begin{equation}\label{def:X}
    X = \left\{ \varphi\in C_+^0 \colon f\left( \int_0^{a^*} \beta(a) e^{-\int_0^a \mu(l) \dd l} \varphi(-a) \dd a \right) = \varphi(0) \right\},
\end{equation}
then every initial function $\varphi\in X$ yields a continuous solution of \eqref{eq:int_3} and therefore we can consider the continuous semiflow $\Phi \colon \R_+ \times X \to X$ defined by $\Phi(t,\varphi) = b^\varphi_t$, where $b^\varphi$ denotes the solution to the integral equation equipped with the initial data $\varphi$. This turns out to be a monotone semiflow, due to the comparison principle from the previous section, and it is clear that the equilibrium points for the semiflow are the constant functions given by the fixed points of $f$. The complete metric space $X$ is endowed with the order relation given by the cone of $C_+^0$. In the following, we will state some basic properties of the semiflow.

\begin{proposition}\label{compact_semifl}
Suppose that the conditions on Assumption \ref{as-1} hold. Then, the semiflow $\Phi$ is eventually strongly monotone and eventually compact. Namely, the following properties hold
\begin{enumerate}[label=(\roman*)]
    \item If $\varphi < \psi$ are two elements of $X$, then there is some $t_0>0$ such that $\Phi(t_0,\varphi) \ll \Phi(t_0,\psi)$. As a matter of fact, $\Phi(t, \varphi) \ll \Phi(t,\psi)$ for all $t\geq t_0$.
    \item For every bounded subset $B\subseteq X$, $\Phi(t,B)$ has compact closure for any $t\geq a^*$.
\end{enumerate}
\end{proposition}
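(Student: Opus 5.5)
The proof works directly with the delay-integral equation \eqref{eq:int_3}. Write $k(a) := \beta(a)e^{-\int_0^a\mu(l)\dd l}$ on $[0,a^*]$, so that $\int_0^{a^*}k = 1$ by \eqref{eq:norm}. For the solution $b^\varphi$ issued from $\varphi\in X$ put
\[
B^\varphi(t) = \int_0^{a^*} k(a)\, b^\varphi(t-a)\dd a ,
\]
so that $b^\varphi(t) = f(B^\varphi(t))$ for $t\geq 0$.

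\textbf{Part (i).} Let $\varphi<\psi$ in $X$ and set $w = b^\psi - b^\varphi$. The comparison principle (applied on the extended history) gives $w\geq 0$ on $[-a^*,+\infty)$. Since $f'>0$ everywhere, the mean value theorem yields
\[
w(t) = f'(\xi(t))\int_0^{a^*} k(a) w(t-a)\dd a ,
\]
with $f'(\xi(t))>0$. Hence $w(t)>0$ exactly when $w$ is nonzero on some set of positive $k$-measure inside $[t-a^*,t]$.

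First, I show that $w(t_1)>0$ for some $t_1\in[0,a^*]$. By continuity, $w=\psi-\varphi>0$ on an interval $J\subset[-a^*,0]$. If $J$ contains $0$, then $w(0)>0$ directly. Otherwise, as $t$ runs over $[0,a^*]$, the window $t-[0,a^*]$ sweeps over $J$. Suppose, for contradiction, that $w\equiv 0$ on $[0,a^*]$. Then $\int_{[0,a^*]\cap(t-J)} k = 0$ for every $t$, i.e. $k$ vanishes a.e. on the set $\bigcup_t (t-J)\cap[0,a^*]$. This set contains a neighbourhood of $a^*$ intersected with $[0,a^*]$, which contradicts the definition \eqref{def:a*} of $a^*$. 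This step uses the positivity of $\int_\sigma^{a^*}k$ for every $\sigma<a^*$.

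Next comes propagation. Once $w>0$ on an interval $(t_1-\epsilon,t_1+\epsilon)$, the same identity gives $w(t)>0$ whenever the set $t-\supp k$ meets that interval with positive $k$-weight. Because $k>0$ on sets accumulating at $a^*$, every $t$ in a neighbourhood of $t_1+a^*$ satisfies this. Iterating, the positivity set grows until it contains an interval of length $a^*$. After that, the window $[t-a^*,t]$ always meets it, and positivity holds for all later $t$. This gives $\Phi(t,\varphi)\ll\Phi(t,\psi)$ for all $t\geq t_0$, for a suitable $t_0$ (something like $3a^*$).

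\textbf{Main obstacle.} The hard step is this propagation. We only know that $k$ charges every neighbourhood of $a^*$; $k$ may vanish on large subsets of $[0,a^*]$. I would handle this by induction. Suppose $w>0$ on an interval $I$. The set of $t$ for which $(t-a^*,t)\cap I$ carries positive $k$-mass, namely where $k$ has mass on $t-I$, is open. Because of the mass near $a^*$, this set contains $(\inf I + a^*-\delta',\, \sup I + a^*)$ for some $\delta'>0$. So each step of length about $a^*$ extends the positivity set by a fixed amount, and finitely many steps give an interval of length $\geq a^*$. Since $w\geq0$ and the weight is positive, positivity on a full window persists for all later times.

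\textbf{Part (ii).} Let $B\subseteq X$ be bounded. Theorem \ref{th:bound} (its boundedness estimate, adapted to the history setting with bound $C_1+C_2\|\varphi\|_\infty$) shows that $b^\varphi$ is bounded uniformly over $\varphi\in B$ on any fixed time interval. For $t\geq a^*$, the segment $\Phi(t,\varphi)$ consists of values $b^\varphi(s)=f(B^\varphi(s))$ with $s\geq 0$.

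For equicontinuity, for $s,s'\geq 0$ I estimate
\[
|B^\varphi(s)-B^\varphi(s')| \leq \|b^\varphi\|_\infty \int_{\R} |\tilde k(s-\sigma)-\tilde k(s'-\sigma)|\dd\sigma ,
\]
where $\tilde k$ is the extension of $k$ by zero. By continuity of translations in $\L^1$, the right-hand side tends to $0$ as $|s-s'|\to 0$, uniformly in $\varphi\in B$. Composing with $f$, which is Lipschitz, shows that $\{\Phi(t,\varphi):\varphi\in B\}$ is uniformly bounded and equicontinuous. Arzelà--Ascoli then gives relative compactness in $C([-a^*,0])$, and the closure stays in the closed set $X$.
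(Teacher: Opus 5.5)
Your proposal is correct. Part (ii) is the paper's argument: uniform bounds from Theorem~\ref{th:bound}, equicontinuity of $f\circ B^\varphi$ via continuity of translations in $\L^1$, Arzel\`a--Ascoli, and closedness of $X$. The first step of (i) is the paper's check that $\sigma\not\equiv 0$: evaluate at $t=a^*+\inf J$ and use that $k$ charges every left neighbourhood of $a^*$.

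The genuine difference is the propagation in (i). The paper uses boundedness of solutions to get $f'\geq\delta>0$ on the relevant range. It then converts the identity into the linear renewal inequality $\xi(t)\geq\delta\sigma(t)+\delta\int_0^t\gamma(a)\xi(t-a)\,\dd a$ and quotes an eventual-positivity result for such inequalities (Corollary B.6 in \cite{MR2731633}). You instead follow the positivity set of $w$ by hand, using only $f'>0$ pointwise and the continuity of $t\mapsto\int_{(t-I)\cap[0,a^*]}k$. A positivity interval $I$ yields positivity on $(\inf I+a^*-\delta',\sup I+a^*)$, so after finitely many shifts by roughly $a^*$ the interval has length at least $a^*$. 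A continuation argument then keeps $w>0$ forever. The paper's route is shorter because it outsources the growth of the positivity set to a general lemma. Yours is self-contained, needs no boundedness of solutions or uniform lower bound on $f'$ for part (i), and makes the mechanism explicit.

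Two points to tighten, neither of which affects validity:
\begin{itemize}
\item The ``fixed amount'' gained per step needs one sentence of justification. Whether $t=\inf I+a^*-s$ lies in the new positivity set depends only on the $k$-mass of $(a^*-s-|I|,a^*-s)\cap[0,a^*]$. This mass is nondecreasing in $|I|$, so a $\delta_0>0$ chosen by continuity in $s$ for the initial length works at every later step.
\item Your guess $t_0\approx 3a^*$ is wrong in general. The number of steps is of order $a^*/\delta_0$, which is unbounded when $k$ is concentrated in a thin layer below $a^*$ and $\psi-\varphi$ is positive only on a short interval.
\end{itemize}
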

\begin{proof}
For the first part, let $\varphi,\psi\in X$ such that $\varphi<\psi$, and set $\gamma(a) =  \beta(a)e^{-\int_0^a \mu(l) \dd l}$. As the solutions $b^\varphi$ and $b^\psi$ are bounded, it is possible to take $K>0$ large enough such that the integral term in $\eqref{eq:int_3}$ stays in $[0,K]$ for all $t\geq 0$, both for $\varphi$ and $\psi$. Then, thanks to Assumption \ref{as-1}-(iii), we can choose $\delta>0$ such that $f'(x)\geq \delta$ for all $x\in [0,K]$. Thus, setting $\xi(t) = b^\psi(t) - b^\varphi(t)$, it follows for $t\geq 0$ that
\begin{align*}
    \xi(t) &\geq \delta \int_0^{a^*} \gamma(a) \xi(t-a) \dd a \\
    &= \delta \sigma(t) + \delta \int_0^t \gamma(a) \xi(t-a) \dd a, 
\end{align*}
where
\[
    \sigma(t) = \begin{cases}
        \int_t^{a^*} \gamma(a) [\psi(t-a) - \varphi(t-a)] \dd a, & t\in[0,a^*], \\
        0, & t>a^*.
    \end{cases}
\]
Because of the hypothesis made about the order relation between $\psi$ and $\varphi$, we can see that $\sigma \not\equiv 0$. In fact, there is an interval $(\underline{\alpha},\overline{\alpha}) \subseteq (-a^*,0)$ such that $\psi(s) - \varphi(s) > 0$ for all $s\in(\underline{\alpha},\overline{\alpha})$. Thus, by taking $t=\underline{\alpha} + a^*$ and due to the definition of $a^*$ follows 
\[
    \sigma(t) = \int_{\underline{\alpha} + a^*}^{a^*} \gamma(a) [\psi(\underline{\alpha} + a^* -a) - \varphi(\underline{\alpha} + a^*) ] \dd a \neq 0.
\]
Therefore, from Corollary B.6 in \cite{MR2731633} we infer there is some $t_0>0$ such that $\xi(t)>0$ for all $t>t_0$, which implies that $b^\varphi_t \ll b^\psi_t$ for all $t> t_0 + a^*$.

For the next claim, it is enough to show that $\Phi(a^*,\cdot)$ maps bounded sets into relatively compact ones due to the semiflow property. Let us take $\{ \varphi_n \}_{n\in \N}$ a bounded sequence in $X$, and note by $b_n$ the associated solutions. Also, let $t\in [0,a^*)$ and $h>0$ such that $t+h\leq a^*$. Then, let $L$ be a Lipschitz constant for $f$ and observe
\begin{align*}
    |b_n(t+h) - b_n(t)| &\leq L \left| \int_{t+h-a^*}^{t+h}  \gamma(t+h-a) b_n(a) \dd a - \int_{t-a^*}^t \gamma(t-a) b_n(a) \dd a \right| \\
    & \leq L\Big[ \int_t^{t+h} \gamma(t+h-a) b_n(a) \dd a + \int_{t+h-a^*}^t |\gamma(t+h-a) - \gamma(a)| b_n(a) \dd a  \\
    & \qquad + \int_{t-a^*}^{t+h-a^*} \gamma(t-a) b_n(a) \dd a\Big].
\end{align*}
Now, Theorem \ref{th:bound} and the boundedness of the sequence $\{\varphi_n\}_{n\in\N}$ imply the solutions $b_n$ are uniformly bounded on $\R_+$. This fact, jointly with the continuity of the shift operator in $\L^1$, leads to the equicontinuity of the family $\{b_n\}_{n\in \N}$ as functions defined on $[0,a^*]$. Thus, the claim follows from the Arzelá-Ascoli Theorem.
\end{proof}

\begin{corollary}\label{attractor}
Under the same assumptions as in the previous proposition, the semiflow $\Phi\colon \R_+ \times X \to X$ is asymptotically smooth. Furthermore, there exists $A\subset X$ a compact attractor of bounded sets for the semiflow $\Phi$.
\end{corollary}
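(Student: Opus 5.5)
The plan is to invoke the standard existence theorem for global attractors of dissipative, asymptotically smooth semiflows (e.g.\ Hale's theorem): a continuous semiflow on a complete metric space that is asymptotically smooth and point dissipative, with forward orbits of bounded sets bounded, has a compact attractor of bounded sets. Accordingly we verify three ingredients for $\Phi$ on the closed subset $X\subseteq C^0_+([-a^*,0])$: asymptotic smoothness, boundedness of orbits of bounded sets, and point dissipativity.

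\textbf{Asymptotic smoothness.} By Proposition~\ref{compact_semifl}(ii), $\Phi(t,\cdot)$ is compact for $t\ge a^*$, i.e.\ it maps bounded sets into relatively compact ones. The standard fact that an eventually compact (conditionally completely continuous) semiflow is asymptotically smooth then applies. Concretely, let $B$ be a bounded set with $\Phi(t,B)\subseteq B$ for all $t\ge0$. Then $\Phi(a^*,B)$ is relatively compact, so $J:=\overline{\Phi(a^*,B)}$ is compact and contained in $\overline B$. Every $\Phi(t,\varphi)$ with $t\ge a^*$ and $\varphi\in B$ equals $\Phi(a^*,\Phi(t-a^*,\varphi))\in J$. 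Hence $J$ attracts $B$, which is exactly asymptotic smoothness. Since $X$ is closed in $C([-a^*,0])$, it is complete, so nothing is lost by working in $X$.

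\textbf{Bounded orbits.} This follows from the last estimate of Theorem~\ref{th:bound}, written for initial data in $X$. For $\varphi\in X$ with $\|\varphi\|_\infty\le R$, the history term is bounded by $\int_0^{a^*}\gamma(a)\varphi(t-a)\,\dd a\le R$ by the normalization \eqref{eq:norm}. The same Grönwall-free argument as in that proof gives
\[
\sup_{t\ge0} b^\varphi(t)\le \max\Bigl\{R,\ \frac{R+f(M)}{1-\rho}\Bigr\}
\]
(after applying $f$, with $f$ Lipschitz), so $\bigcup_{t\ge0}\Phi(t,B)$ is bounded whenever $B$ is.

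\textbf{Point (indeed bounded) dissipativity.} Theorem~\ref{th:bound} gives $\limsup_{t\to\infty}b(t)\le\kappa_2$ for every solution. The main obstacle is that this limsup bound is stated pointwise, whereas a bounded absorbing set needs a time uniform over bounded sets of initial data. I would handle this using the comparison principle. Given $B$ bounded by $R$, every $\varphi\in B$ lies below the solution starting from a constant large initial history: choose $K\ge \max\{R,\kappa_2\}$ with $f(K)\le K$, which is possible by \eqref{diag_cond}. The constant $K$ is a supersolution, so its solution $\bar b$ is nonincreasing and converges to a fixed point $\ge\kappa_2$, namely $\kappa_2$. (If the constant $K$ history is not in $X$, compare at the level of \eqref{eq:int_3} directly, since the comparison proof only uses monotonicity of $\AA$.) Monotonicity then gives $b^\varphi(t)\le\bar b(t)\le\kappa_2+1$ for all $t\ge T(R)$, uniformly in $\varphi\in B$. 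Hence the ball of radius $\kappa_2+1$ in $X$ absorbs bounded sets.

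Combining these three facts with the cited attractor theorem yields a compact attractor $A$ of bounded sets.
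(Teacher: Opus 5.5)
Your proposal is correct and follows the paper's route. You get asymptotic smoothness from the eventual compactness in Proposition~\ref{compact_semifl}(ii), boundedness and dissipativity from Theorem~\ref{th:bound}, and the attractor from a Hale-type theorem (the paper cites Theorem 2.33 of Smith--Thieme). Your comparison argument for a uniform absorbing ball is valid but more than needed, since point dissipativity together with bounded orbits of bounded sets already suffices. Your orbit bound should read $\sup_{t\ge0} B(t)\le (R+f(M))/(1-\rho)$ before applying $f$.
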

\begin{proof}
The first statement is a direct consequence of the last result, and jointly with the boundedness result given in the previous section, it implies the existence of the compact attractor of bounded sets (see Theorem 2.33 in \cite{MR2731633}).
\end{proof}

\subsection{The separatrix}

From Corollary \ref{cor:conv} we note that the unknown dynamics of \eqref{eq:int_3} come from those initial functions producing oscillating solutions around the unstable equilibrium $\kappa_2$. This situation has also been encountered in the study of delayed differential equations arising in neural network theory, for instance, during the modeling of single neurons as well as for synchronized activities in those networks \cite{MR1719128,MR1834537}.

This leads to considering the following set
\[
    S = \{ \varphi\in X \colon (b^\varphi)^{-1}(\kappa_1) ~ \text{is unbounded from above} \},
\]
whose elements are the initial data producing a solution that intersects with the intermediate equilibrium $\kappa_1$ for arbitrarily large times, cf. \cite{MR1822211,MR1719128}. Clearly, $S\neq \varnothing$ since it contains the constant function with the value of the equilibrium $\kappa_1$; also, $S$ is a closed set. Indeed, if $\varphi\in X\setminus S$, then we may assume without loss of generality that the associated solution $b^\varphi$ satisfies $b^\varphi_t \ll \kappa_1$ for some $t>0$ large enough. Therefore, as a consequence of the continuous dependence on the initial data (see Proposition \ref{prop:basics}) there exists an open neighborhood of $\varphi$ contained in $X\setminus S$. In particular, every solution to the integral equation associated with an initial function in this neighborhood goes to 0 as $t\to \infty$ because of Corollary \ref{cor:conv}.

The set $S$ acts as a separatrix for the semiflow, in the sense that it divides the state space $X$ in the initial functions whose associated solutions will eventually stay strictly above (below) the unstable equilibrium, and therefore converge to the stable nontrivial (trivial) equilibrium. Thus, if we define the sets
\begin{align*}
    X_0 = \{ \varphi\in X \colon b^\varphi_t \ll \kappa_1 ~\text{for some}~t>0 \}, \\
    X_{\kappa_2} = \{ \varphi\in X \colon b^\varphi_t \gg \kappa_1 ~\text{for some}~t>0 \},
\end{align*}
then we have the next decomposition of the state space as $X = X_0 \sqcup S \sqcup X_{\kappa_2}$.

\begin{proposition}\label{unord_S}
The set $S$ is totally unordered; i.e., if $\varphi,\psi\in S$ and $\varphi\leq \psi$, then $\varphi = \psi$.
\end{proposition}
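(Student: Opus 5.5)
Suppose, for contradiction, that $\varphi,\psi\in S$ with $\varphi\leq\psi$ and $\varphi\neq\psi$, so that $\varphi<\psi$. The plan is to use the eventual strong monotonicity of Proposition \ref{compact_semifl}(i) to place a strongly ordered gap between the two trajectories. We then show that the oscillation of both solutions around $\kappa_1$ forces this gap to be crushed toward $\kappa_1$. This is incompatible with the instability of $\kappa_1$, which comes from $f'(\kappa_1)>1$ together with the normalization \eqref{eq:norm}.

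\textbf{Step 1 (strong ordering).} By Proposition \ref{compact_semifl}(i) there is $t_0$ with $b^\varphi_t\ll b^\psi_t$ for all $t\geq t_0$. Hence $\xi=b^\psi-b^\varphi>0$ on $[t_0-a^*,\infty)$.

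\textbf{Step 2 (crossings with a fixed order).} Since $\varphi,\psi\in S$, both $b^\varphi$ and $b^\psi$ take the value $\kappa_1$ at arbitrarily large times. By the strict inequality, whenever $b^\psi(t)=\kappa_1$ we have $b^\varphi(t)<\kappa_1$, and whenever $b^\varphi(s)=\kappa_1$ we have $b^\psi(s)>\kappa_1$.

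\textbf{Step 3 (order-preserving comparison).} Pass to the compact attractor of Corollary \ref{attractor}. Take $t_n\to\infty$ with $b^\psi(t_n)=\kappa_1$ and extract limits $b^\varphi_{t_n+\cdot}\to \hat b^\varphi$ and $b^\psi_{t_n+\cdot}\to\hat b^\psi$. These limits are entire solutions of \eqref{eq:int_3} with $\hat b^\varphi\leq\hat b^\psi$ and $\hat b^\psi(0)=\kappa_1$. The aim is to show the two limits are distinct and still strongly ordered. Then the limiting pair still lies in the closed set $S$ (the $\omega$-limit set of an element of $S$ stays in $S$, because $X_0$ and $X_{\kappa_2}$ are open and invariant).

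\textbf{Step 4 (the contradiction).} I expect this to be the main obstacle. It requires a quantitative use of $f'(\kappa_1)>1$. My first attempt is a discrete Lyapunov / sign-counting argument in the spirit of Mallet-Paret and Sell, as used in \cite{MR1719128,MR1822211}. Linearize $\xi$ and consider the difference $z=b^\psi-\kappa_1$. The number of sign changes of $z$ on windows of length $a^*$ is nonincreasing for positive feedback equations. This shows that solutions in $S$ are slowly oscillating, and that two ordered solutions of $S$ would have different sign-change patterns, which contradicts Step 2.

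An alternative route is more direct. Define $\theta(t)=\min_{[t-a^*,t]}\xi/\max\xi$ and use
\[
\xi(t)=\int_0^{a^*}\gamma(a)\,f'(\zeta(t,a))\,\xi(t-a)\,\dd a
\]
near times where both trajectories are close to $\kappa_1$. Here $f'(\zeta)\approx f'(\kappa_1)>1$ and $\int\gamma=1$, so the gap $\xi$ grows by a factor exceeding $1$ over each such passage. Meanwhile boundedness (Theorem \ref{th:bound}) caps the gap. The delicate point is to show the trajectories spend enough time near $\kappa_1$ for this growth to dominate. That is where I expect the argument to need the most care, and where I would fall back on the sign-counting approach.
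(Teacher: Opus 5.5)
Your proposal has a genuine gap: Steps 3 and 4 never produce a contradiction. In Step 3, nothing prevents the gap $\xi(t_n+\cdot)$ from collapsing, so $\hat b^\varphi$ and $\hat b^\psi$ may coincide. Even if they stayed distinct, you would only have an ordered pair in $S$ again, with no new information.

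Neither route in Step 4 works as sketched. The growth estimate fails because trajectories in $S$ need not stay near $\kappa_1$. They may oscillate with large amplitude, for instance along a periodic orbit around $\kappa_1$. They would then pass through $\kappa_1$ only briefly and spend long stretches where $f'<1$ (near $0$ or $\kappa_2$), so the factor $f'(\kappa_1)>1$ gained near $\kappa_1$ need not dominate. The sign-counting route relies on a discrete Lyapunov functional designed for Mallet-Paret--Sell type delay differential equations. Nothing in your sketch supplies an analogue for the distributed-delay integral equation \eqref{eq:int_3}. Moreover, ``different sign-change patterns contradict Step 2'' is a non sequitur: Step 2 is perfectly consistent with an ordered pair crossing $\kappa_1$ at interlaced times.

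The missing idea is to reduce to ordered trajectories that \emph{converge} to $\kappa_1$, where the instability can be used with a coefficient bounded above $1$ uniformly in time. The paper does this with the genericity of quasiconvergence for strongly monotone semiflows. Between $b^\varphi_{t_0}\ll b^\psi_{t_0}$ one finds $\tilde\varphi\ll\tilde\psi$ whose $\omega$-limit sets lie in $\{0,\kappa_1,\kappa_2\}$, hence, by connectedness, are single equilibria. Both are squeezed between $b^\varphi$ and $b^\psi$. So convergence to $0$ would force $b^\varphi\to 0$, and convergence to $\kappa_2$ would force $\liminf b^\psi\geq\kappa_2$; both contradict hitting $\kappa_1$ at arbitrarily large times. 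Thus both converge to $\kappa_1$.

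Then $\xi=b^{\tilde\psi}-b^{\tilde\varphi}>0$, $\xi\to 0$, and
\[
\xi(t)=g(t)\int_0^{a^*}\gamma(a)\xi(t-a)\,\dd a, \qquad g(t)\to f'(\kappa_1),
\]
so $g\geq\rho>1$ for large $t$. Choose $t_n\to\infty$ at which $\xi$ attains its running minimum. Using $\int_0^{a^*}\gamma=1$ gives $\xi(t_n)\geq\rho\,\xi(t_n)$, a contradiction. Your instinct that $f'(\kappa_1)>1$ together with the normalization drives the proof is right. What you lack is this device that forces the linearization coefficient to stay above $1$.
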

\begin{proof}
We proceed by contradiction. Let $\varphi,\psi\in S$ with $\varphi < \psi$. The eventual strong monotonicity of the semiflow yields the existence of some $t_0>0$ such that $b^\varphi_{t_0} \ll b^\psi_{t_0}$. As the quasiconvergence is generic in strongly order-preserving semiflows (see Theorem 4.3 of Chapter 1 in \cite{MR1319817}; see also \cite{MR1112067}) we can find two elements $\tilde{\varphi},\tilde{\psi}\in X$ such that $b^\varphi_{t_0} \ll \tilde{\varphi} \ll \tilde{\psi} \ll b^\psi_{t_0}$, and their $\omega$-limit sets verify $\omega(\tilde{\varphi}),\omega(\tilde{\psi}) \subseteq \{ 0, \kappa_1, \kappa_2 \}$. Since the $\omega$-limit sets are connected, these elements produce a convergent solution of the equation. In particular, note that none of them can converge to $0$ or $\kappa_2$ by reason of the oscillatory behavior of $b^\varphi$ and $b^\psi$.

As a consequence, $b^{\tilde{\varphi}}$ and $b^{\tilde{\psi}}$ converge to $\kappa_1$. Let $\xi(t) = b^{\tilde{\varphi}}(t) - b^{\tilde{\psi}}(t)$ and, as before, $\gamma(a) = \beta(a) e^{-\int_0^{a} \mu(l) \dd l}$ and note that
\[
    \xi(t) = g(t) \int_0^{a^*} \gamma(a) \xi(t-a) \dd a,
\]
where
\[
    g(t) = \int_0^1 f'\left( s\int_0^{a^*} \gamma(a) b^{\tilde{\psi}}(t-a) \dd a + (1-s) \int_0^{a^*} \gamma(a) b^{\tilde{\varphi}}(t-a) \dd a \right) \dd s.
\]
The convergence of these solutions implies that $g(t)\to f'(\kappa_1)>1$ as $t\to \infty$, and in consequence, we can take $\rho\in(1,f'(\kappa_1))$ and $T>0$ large enough to satisfy $g(t)\geq\rho$ for all $t\geq T$. Then, we obtain
\[
    \xi(t) \geq \rho \int_0^{a^*} \gamma(a) \xi(t-a) \dd a, 
\]
but this is a contradiction. Indeed, as $\xi(t)>0$ for all $t\geq 0$ and it vanishes at infinity, we can choose a sequence $(t_n)_{n\in \N}$ such that $t_n\to \infty$ and $\xi(t_n) = \min_{s\in[0,t_n]} \xi(s)$. Thus, from the above inequality follows $\xi(t_n) \geq \rho \xi(t_n)$ for all $n$ large enough, which is impossible since $\rho>1$. The contradiction concludes the proof of the result.
\end{proof}

\subsection{The sharp threshold property}

In what follows, we are interested in the long-term behavior for a monotone family of initial functions $\{\varphi_\lambda\}_{\lambda \geq 0}$ in $X$. More precisely, we adopt the following assumption.

\begin{assumption}\label{as-3}
The family of initial functions $\{ \varphi_\lambda \}_{\lambda \geq 0}$ in $X$ is such that the map $\lambda \mapsto \varphi_\lambda$ is continuous and strictly monotone as a map from $\R_+$ into $\L^1(0,+\infty)$ i.e., if $\lambda < \eta$ then $\varphi_\lambda < \varphi_\eta$. Furthermore, $\varphi_0 \equiv 0$.
\end{assumption}

For each $\lambda\geq 0$, we denote by $b^\lambda$ the associated solution of \eqref{as-3} with the initial function $u_\lambda$. With such a family in $X$, we can split the index set employing the following subsets
\begin{align*}
    M_0 &= \{ \lambda\in \R_+ \colon b^\lambda(t) \to 0~\text{as}~ t\to\infty\}, \\
    M_{\kappa_2} &= \{ \lambda\in \R_+ \colon b^\lambda(t) \to \kappa_2, ~\text{as}~t\to \infty \}, \\
    M_\sigma &= \R_+ \setminus (M_0 \cup M_{\kappa_2}).
\end{align*}
The monotonicity, eventual strong monotonicity of the semiflow, and the continuous dependence on the initial data easily show that the sets $M_0$ and $M_{\kappa_2}$ are open intervals, the former one being of the form $[0,\lambda_*)$ with $\lambda_*\in (0,+\infty]$, and the last one being either empty or of the form $(\lambda^*, +\infty)$ with $\lambda^*\in(\lambda_*,+\infty)$. From the previous subsection, we know that if $\lambda\in M_\sigma$ then $\varphi_\lambda \in S$, and therefore, $\lambda_* = \lambda^*$ due to the nonordering of $S$. We refer to this equality as the sharp threshold property.

\begin{theorem}\label{th:threshold}
Let Assumption \ref{as-3} be satisfied. The set $M_\sigma$ is either empty or a single point; that is, either each of the solutions $b^\lambda$ vanishes at infinity, or there exists a unique $\lambda^*>0$ such that
\begin{itemize}
    \item for all $\lambda < \lambda^*$, $b^\lambda(t) \to 0$ as $t\to \infty$.
    \item for all $\lambda > \lambda^*$, $b^\lambda(t)\to \kappa_2$ as $t\to\infty$.
\end{itemize}
Furthermore, in the latter case, there exists $\delta>0$ with
\[
    \delta \leq \liminf_{t\to \infty} b^{\lambda^*}(t) \leq \limsup_{t\to \infty} b^{\lambda^*}(t) \leq \kappa_2 - \delta.
\]
\end{theorem}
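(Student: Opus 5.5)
The plan is to split $\R_+$ into $M_0$, $M_{\kappa_2}$ and $M_\sigma$, and show that $M_\sigma$ meets the separatrix $S$ in an unordered way. First I establish the structure of $M_0$ and $M_{\kappa_2}$. By Corollary \ref{cor:conv} and the decomposition $X = X_0\sqcup S\sqcup X_{\kappa_2}$, we have $\lambda\in M_0$ iff $\varphi_\lambda\in X_0$. Likewise $\lambda\in M_{\kappa_2}$ iff $\varphi_\lambda\in X_{\kappa_2}$, because a solution with $b_t\gg\kappa_1$ converges to $\kappa_2$ by comparison. Finally $\lambda\in M_\sigma$ iff $\varphi_\lambda\in S$.

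Openness of $M_0$ and $M_{\kappa_2}$ will follow from continuous dependence (Proposition \ref{prop:basics}). If $b^\lambda_t\ll\kappa_1$ on $[t-a^*,t]$, this strict inequality persists for nearby $\lambda$, using continuity of $\lambda\mapsto\varphi_\lambda$. Monotonicity of these sets comes from the comparison principle: if $\lambda\in M_0$ and $\eta<\lambda$, then $b^\eta\le b^\lambda$, so $\eta\in M_0$; symmetrically for $M_{\kappa_2}$. Since $0\in M_0$, we get $M_0=[0,\lambda_*)$ with $\lambda_*>0$, and $M_{\kappa_2}$ is empty or equal to $(\lambda^*,\infty)$.

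Next, if $M_0\neq\R_+$, then $M_\sigma\ne\emptyset$. This holds because $\lambda_*\notin M_0$ by openness, and $\lambda_*\notin M_{\kappa_2}$, since $M_{\kappa_2}$ is open and disjoint from $M_0$. Hence $\lambda_*\in M_\sigma$. Now suppose $\lambda<\eta$ both lie in $M_\sigma$. Then $\varphi_\lambda<\varphi_\eta$ are both in $S$, contradicting Proposition \ref{unord_S}. So $M_\sigma=\{\lambda_*\}$ and $\lambda^*=\lambda_*$, and the complement $(\lambda^*,\infty)$ must equal $M_{\kappa_2}$. This gives the dichotomy. The slightly delicate point is the identification of $M_\sigma$ with $S$, which needs Theorem \ref{th:bound} and Corollary \ref{cor:conv} applied to the shifted initial history $b_t$. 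Here the history lies in $X$, and the equation is autonomous on $X$.

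The main obstacle is the uniform bounds for $b:=b^{\lambda^*}$. Suppose $\liminf b=0$. By Proposition \ref{compact_semifl} the orbit is precompact, so its $\omega$-limit set is a nonempty compact invariant subset of the attractor. I would argue by contradiction using that $0$ is locally asymptotically stable. A crude version fails: $\liminf b(t_n)=0$ at single points does not make the whole history small. Instead, I use the fact that $b$ hits $\kappa_1$ at arbitrarily large times, which forbids the history from staying in the basin of $0$. The core claim is then that the $\omega$-limit set $\omega(\varphi_{\lambda^*})$ contains no history $\chi$ with $\chi(0)=0$. For such $\chi\in X$ we would have $0=\chi(0)=f(\int\gamma\chi)$. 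Since $f(x)=0$ only at $x=0$, because $f'>0$ and $f(0)=0$, this gives $\int_0^{a^*}\gamma(a)\chi(-a)\dd a=0$, so $\chi=0$ on the support of $\gamma$ near $0$. Invariance of the $\omega$-limit set, traced along an entire orbit through $\chi$, then pushes this vanishing backward. I expect eventual strong monotonicity to force the whole $\omega$-limit set to touch $0$, and hence to contain the equilibrium $0$. But $0$ is asymptotically stable, and an orbit accumulating at it must converge, contradicting $\lambda^*\in M_\sigma$.

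The same argument works at $\kappa_2$. I would use the attractivity and local stability of the equilibria from the linearized stability results together with the precompactness of the orbit. If this backward propagation fails, I would fall back on a direct estimate. The idea is to show that $b(t_n)\to0$ forces $\int\gamma b_{t_n}\to0$, and then use uniform continuity of $b$ from Proposition \ref{prop:basics}. The history near $t_n$ is small on a set whose $\gamma$-mass is controlled, and this should be combined with $\limsup b\ge\kappa_1$ from Theorem \ref{th:bound}.
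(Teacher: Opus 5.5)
Your treatment of the dichotomy is the paper's argument: openness and monotonicity of $M_0$ and $M_{\kappa_2}$, $\lambda_*\in M_\sigma$, $\lambda\in M_\sigma\Rightarrow\varphi_\lambda\in S$, and Proposition~\ref{unord_S}. The gap is in the bounds $\delta\le\liminf b^{\lambda^*}\le\limsup b^{\lambda^*}\le\kappa_2-\delta$.

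You correctly reduce these to showing that no $\psi\in\omega(\varphi_{\lambda^*})$ has $\psi(0)=0$ or $\psi(0)=\kappa_2$. Compactness of $\omega(\varphi_{\lambda^*})$ and continuity of $\psi\mapsto\psi(0)$ then give $\delta$. The paper's own proof gets the bounds only from compactness of $A\cap S$ and $\{0,\kappa_2\}\cap S=\varnothing$. That keeps the $\omega$-limit set away from the constant functions, but does not by itself give this pointwise statement.

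Your proposal does not actually rule out $\psi(0)=0$. After pushing the zero of $\psi$ backward along an entire orbit $c\subset\omega(\varphi_{\lambda^*})$, you only ``expect'' eventual strong monotonicity to yield $0\in\omega(\varphi_{\lambda^*})$. Proposition~\ref{compact_semifl}(i) gives no time $t_0$ uniform in the initial pair, so it cannot be iterated along a backward orbit. Nothing you write shows that the zero set of $c$ ever contains a window of length $a^*$, and that is exactly what is needed to conclude $\psi\equiv0$.

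The missing ingredient is a sumset argument. Let $\gamma(a)=\beta(a)e^{-\int_0^a\mu(l)\dd l}$ and let $E\subseteq[0,a^*]$ be its closed essential support.
\begin{itemize}
\item Since $f(x)=0$ only at $x=0$, $c(s)=0$ forces $\int_0^{a^*}\gamma(a)c(s-a)\dd a=0$. Hence $c=0$ on $s-E$, and so $c=0$ on $-(E+\dots+E)$ for any number of summands.
\item Because $|E|>0$, $\1_E*\1_E$ is continuous and not identically zero. So $E+E\supseteq(p,p+\ell)$, and the $2k$-fold sum contains $(kp,kp+k\ell)$.
\item Take $k\ell>a^*$. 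Then $c$ vanishes on a full window of length $a^*$ ending at $t=-kp\le0$. By forward uniqueness, $c\equiv0$ on $[t-a^*,\infty)$, so $\psi\equiv0$.
\item Thus $0\in\omega(\varphi_{\lambda^*})$, which gives $b^{\lambda^*}_t\ll\kappa_1$ for some $t$. That means $\varphi_{\lambda^*}\in X_0$, contradicting $\varphi_{\lambda^*}\in S$.
\end{itemize}
This needs no stability of $0$ in $X$, which the paper only proved in the $\L^1$ setting.

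At $\kappa_2$ the same argument works, but only because every element of $\omega(\varphi_{\lambda^*})$ satisfies $\psi\le\kappa_2$ pointwise. This follows from $\limsup b\le\kappa_2$ in Theorem~\ref{th:bound}. It is what makes $\int_0^{a^*}\gamma(a)\psi(-a)\dd a=\kappa_2=\kappa_2\int_0^{a^*}\gamma$ force $\psi=\kappa_2$ on $-E$, and you should state it.

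Your fallback ``direct estimate'' cannot replace this. Smallness of $\int\gamma(a)b(t_n-a)\dd a$ gives no control of $\int\gamma(a)b(t-a)\dd a$ for $t>t_n$, because later times weight the same history differently; think of $\gamma$ concentrated near $a^*$. Some use of the invariance of the $\omega$-limit set is unavoidable.
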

\begin{proof}
The first part of the proof follows from the above discussion. For the last claim, note that $\varphi_{\lambda^*}\in S$, and since $S$ is a closed and forward invariant set, then $\omega(\varphi_{\lambda^*}) \subseteq A \cap S$, where $A$ is the compact attractor for $\Phi$. Consequently, noting that $A\cap S$ is compact and $\{0, \kappa_2\} \cap (A\cap S) = \varnothing$ we get the conclusion of the theorem.
\end{proof}

\section{Proof of the main result}\label{sec:main_th}

This section is dedicated to the proof of Theorem \ref{th:main}. Recall that the semiflow $U(t) \colon \L^1_+(0,+\infty) \to \L^1_+(0,+\infty)$ defined by the Gurtin-MacCamy model can be expressed in terms of the solution formula given by the method of the characteristics; i.e., $U(t)u_0 = u(t,\cdot)$, where
\[
    u(t,a) = \begin{cases}
        e^{-\int_{a-t}^a \mu(l) \dd l} u_0(a-t), & a\geq t, \\
        e^{-\int_0^a \mu(l) \dd l} b(t-a), & a\leq t,
    \end{cases}
\]
where $b\colon \R_+ \to \R_+$ is the unique continuous solution of the integral equation (see Section \ref{sec:threshold})
\begin{equation}\label{eq:int-proof}
    b(t) = f\left( \int_0^{a^*} \beta(a) e^{-\int_0^a \mu(l) \dd l} b(t-a) \dd a \right), \quad t\geq 0,
\end{equation}
with the specific initial values
\[
    b(t) = e^{\int_0^{-t} \mu(l) \dd l} u_0(-t), \quad t\in(-a^*, +\infty).
\]

Now, let us suppose that $a^*<+\infty$. Then, the solution $b$ is completely determined by the values of the initial data $u_0$ in the interval $(0,a^*)$. Therefore, the dynamics of the semiflow $\{ U(t) \}_{t\geq 0}$ on $\L^1$ relies on the dynamics of the semiflow $\{ \Phi(t,\cdot ) \}_{t\geq 0}$ defined on $X$, see \eqref{def:X}. As a consequence, we can use the results for strongly monotone semiflows since the space of continuous functions is endowed with an order defined in terms of a solid cone, i.e., with non-empty interior. See e.g., \cite{MR921986,MR1319817,MR1112067}. The authors in the aforementioned works proved that for a strongly monotone and continuous semiflow (or more generally for a strongly order-preserving and continuous semiflow) the long-term behavior of most of the trajectories is rather simple; mainly, the omega limit sets of most of the points are constituted by equilibria. We point out the fact that here the semiflow is assumed to be continuous since the dynamics of a discrete semiflow may be harder to describe, and the generic-behavior property can even fail; see \cite{MR1132766,MR1223450} and the references therein. Alternatively, other tools that have been successfully applied to describe the dynamics of discrete dynamical systems, parabolic autonomous and nonautonomous equations, and random dynamical systems arising from parabolic equations and delay equations are the principal Floquet bundles and the principal Lyapunov exponents. These notions extend naturally the properties of the principal eigenvalue of parabolic problems and therefore may serve as a generalization of the Krein-Rutman theorem; see \cite{MR2350061,MR3537364,MR2754337,MR4392478}.

\begin{proof}[Proof of Theorem \ref{th:main}]
Let $\{ u_\lambda \}_{\lambda\geq 0}$ be a family of initial distributions in $\L^1_+(0,+\infty)$ as stated in the theorem, and suppose that not all the solutions vanish. Let $\{b^\lambda\}_{\lambda \geq 0}$ denote the associated family of solutions to the integral equation \eqref{eq:int-proof}. Set $\{ \varphi_\lambda \}_{\lambda\geq 0}$ be the family in $C_+^0 = C_+^0([-a^*,0])$ defined by
\[
    \varphi_\lambda(t) = b^\lambda(t+a^*), \quad t\in [-a^*,0].
\]
Clearly, each function $\varphi_\lambda$ belongs to $X$ (see \eqref{def:X}) since $b^\lambda$ satisfies the integral equation in the right-half axis. In addition, this family verifies Assumption \ref{as-3} thanks to Assumption \ref{as-2}. Indeed, the continuity of the map $\lambda \mapsto \varphi_\lambda$ follows from the continuous dependence of the initial data (Proposition \ref{prop:basics}), and the monotonicity condition follows from the comparison principle. It just remains to prove that if $\lambda < \eta$ then $\varphi_\lambda < \varphi_\eta$. To do this, observe that for $t\in[0,a^*]$ we can rewrite the equation \eqref{eq:int-proof} for $b^\lambda$ as
\[
    b^\lambda(t) = f\left( \int_t^{a^*} \beta(a) e^{-\int_{a-t}^a \mu(l) \dd l} u_\lambda(a-t) \dd a + \int_0^t \beta(a) e^{-\int_0^a \mu(l) \dd l} b^\lambda(t-a) \dd a \right),
\]
and hence it is enough to show that
\[
    \int_t^{a^*} \beta(a) e^{-\int_{a-t}^a \mu(l) \dd l} [u_\eta(a-t) - u_\lambda(a-t)] \dd a >0,
\]
which is equivalent to
\begin{equation}\label{eq:positive}
    \int_0^{a^*} \beta(a+t)e^{-\int_a^{a+t} \mu(l) \dd l} [u_\eta(a) - u_\lambda(a)] \dd a >0,
\end{equation}
for some $t\in [0,a^*]$.

To prove this, recall from Assumption \ref{as-2} we know that the difference $u_\eta - u_\lambda$ is positive in some set of positive measure; thus, let us define $P=\{ a\in(0,a^*) \colon u_\eta(a) - u_\lambda(a) >0 \}$ and $M = \{ a\in (0,a^*) \colon \beta(a) > 0 \}$. Each of these sets have positive measure, and from the definition of $a^*$ we also have that $M\cap (t,a^*)$ has positive measure for every $t\in(0,a^*)$. Define the function $h(t) = \LL(P \cap (M-t))$, where $\LL$ denotes the Lebesgue measure. Subsequently, we obtain
\begin{align*}
    \int_0^{a^*} h(t) \dd t &= \int_0^{a^*} \int_0^{a^*} \1_{P\cap (M-t)}(s) \dd s \dd t \\
    &= \int_0^{a^*} \int_0^{a^*} \1_P(s) \1_M(s+t) \dd s \dd t\\
    &= \int_0^{a^*} \1_P(s) \int_0^{a^*} \1_M(s+t) \dd t \dd s \\
    &= \int_0^{a^*} \1_P(s) \LL(M \cap (s,a^*)) \dd s > 0,
\end{align*}
which implies that $h$ has to be positive in some subset of $a^*$ of positive measure, from where \eqref{eq:positive} follows. Furthermore, due to the uniqueness of the solution to the integral equation, it is clear that $\Phi(t,\varphi_\lambda) = b^\lambda_{t+a^*}$. Then, from Theorem \ref{th:threshold} we obtain the existence of a unique $\lambda^*>0$ such that 
\[
    \lim_{t\to \infty} b^\lambda(t) = \begin{cases}
        0, &\text{if}~ \lambda < \lambda^*, \\
        \kappa_2, &\text{if}~ \lambda> \lambda^*.
    \end{cases}
\]

Now, observe that from the formula of the semiflow $U$ we obtain
\begin{align*}
    \| U(t)u_\lambda \| &= \int_0^{+\infty} u(t,a) \dd a \\
    &= \int_t^{+\infty} e^{-\int_{a-t}^a \mu(l) \dd l} u_0(a-t) \dd a + \int_0^t e^{-\int_0^a \mu(l) \dd l } b(t-a) \dd a \\
    &\leq e^{-\unmu t} \|u_0\| + e^{-\unmu t}\int_0^{t} e^{\unmu a}b(a) \dd a,
\end{align*}
and the right-hand side goes to 0 as $t\to \infty$ if $\lambda < \lambda^*$. This shows that $U(t)u_\lambda \to 0$ as $t\to \infty$ if $\lambda< \lambda^*$ and a similar argument shows the convergence towards $\overline{\varphi}_2$ if $\lambda > \lambda^*$. 

On the other hand, if $\lambda = \lambda^*$ and $\varepsilon>0$ is given, by the last claim in Theorem \ref{th:threshold} there exists $\delta>0$ and  $T_\varepsilon>0$ with $b^{\lambda^*}(t) \leq \kappa_2 - \delta +\varepsilon$ for every $t\geq T_\varepsilon$. Then,
\[
    \| U(t)u_{\lambda^*} - \overline{\varphi}_2 \| = \int_0^{+\infty} \left| e^{-\int_{a}^{a+t} \mu(l) \dd l} u_0(a) - \kappa_2 e^{-\int_0^{a+t} \mu(l) \dd l} \right| \dd a + \int_0^t e^{-\int_0^a \mu(l) \dd l} |b(t-a) - \kappa_2| \dd a,
\]
where the first term in the right-hand side goes to 0 as $t\to \infty$, while the second term can be analyzed by splitting the integral in the sets $[0,t-T_\varepsilon]$ and $[t-T_\varepsilon, t]$, from where it follows
\[
    \liminf_{t\to \infty} \| U(t)u_{\lambda^*} - \overline{\varphi}_2 \| \geq \kappa_2 - \delta.
\]
The other relation is obtained analogously. This completes the proof of the theorem.
\end{proof}

Something remarkable from the works \cite{MR2608941,MR2754337,MR2169048} is that they also present a description of the asymptotic behavior of the threshold solutions for the treated models, besides the sharp transition between convergence to the trivial or nontrivial stable equilibria. Such a description is not included in our main result and is an open question for the moment. Notwithstanding, it is known that the age-structured model under consideration may produce oscillatory and periodic behavior when a nontrivial equilibrium loses stability; see, for instance, \cite{MR5010560,MR1822211,MR4683856,MR2559965}. In particular, in \cite{MR1719128} the authors proved that the global attractor for the system may be visualized as a solid spindle that is divided by a disk whose boundary is a periodic orbit. A center manifold theorem played a key role in this description, and therefore we hope that the tools developed by Ducrot, Magal and Ruan may help to prove the periodic behavior effectively for this system, e.g., \cite{MR3988617,MR2559965}. 

\section{A non-compactly supported case}\label{sec:non_compact}

This section is devoted to the study of a particular case in which the birth rate $\beta$ has non-compact support, which implies $a^* = + \infty$. More precisely, throughout this section it is assumed that Assumption \ref{as:4} holds. The main difficulty with the case $a^* = + \infty$ is that the effect produced by the initial population is always carried over all the time as it passes; see the first term on the right-hand side of \eqref{eq:int_1}. In contrast with the case treated before, the solution of the integral equation given by the Volterra formulation of the problem is not completely determined by the previous information over a compact interval. This change makes the analysis more involved, and the strongly order-preserving property actually fails, see the Appendix. Nevertheless, the eventually constant behavior for the birth and date rates, stated in Assumption \ref{as:4}, allows us to translate the problem for the semiflow generated by the age-structured model \eqref{GM-model} into one for a semiflow defined on a subset of the space of continuous functions over a compact interval. To accomplish this, the idea is to introduce a new variable that tracks over time the mass produced by the individuals of a large enough age. 

Let us start with a quick remark that follows from the new hypothesis. Due to Assumption \ref{as:4}, the normalization hypothesis made over $\beta$ and $\mu$ reads
\begin{align}
    1 &= \int_0^{+\infty} \beta(a)e^{-\int_0^a \mu(l) \dd l} \dd a \nonumber \\
    &= \int_0^{a_0} \beta(a)e^{-\int_0^a \mu(l) \dd l} \dd a + \beta_\infty \int_{a_0}^{+\infty} e^{-\int_0^{a_0} \mu(l) \dd l}e^{-\int_{a_0}^a \mu(l) \dd l} \dd a \nonumber \\
    &= \int_0^{a_0} \beta(a)e^{-\int_0^a \mu(l) \dd l} \dd a + \beta_\infty e^{-\int_0^{a_0}\mu(l) \dd l} \int_{a_0}^{+\infty} e^{-\mu_\infty (a-a_0)} \dd a \nonumber \\
    &= \int_0^{a_0} \beta(a)e^{-\int_0^a \mu(l) \dd l} \dd a + \frac{\beta_\infty}{\mu_\infty} e^{-\int_0^{a_0} \mu(l) \dd l}. \label{eq:norm_2}
\end{align}

Now, after integrating along the characteristics, we have the following representation for the solution $u$ of the Gurtin-MacCamy model
\[
u(t,a) = \begin{cases}
    e^{-\int_{a-t}^a \mu(l) \dd l} u_0(a-t), & a\geq t, \\
    e^{-\int_0^a \mu(l) \dd l a} b(t-a), &a\leq t,
\end{cases}
\]
where $b\colon \R_+ \to \R$ is the unique continuous solution to
\[
    b(t) = f\left( \int_t^{+\infty} \beta(a)e^{-\int_{a-t}^a \mu(l) \dd l} u_0(a-t) \dd a + \int_0^t \beta(a) e^{-\int_0^a \mu(l) \dd l} b(t-a) \dd a \right), \quad t\geq 0,
\]
and observe that $b(t) = f\left( \int_0^{+\infty} \beta(a) u(t,a) \dd a \right)$. Next, by integrating the differential equation in \eqref{GM-model} over $[a_0,+\infty)$ follows (at least formally)
\begin{equation}\label{eq:int_I}
    \partial_t \int_{a_0}^{+\infty} u(t,a) \dd a = u(t,a_0) - \mu_\infty\int_{a_0}^{+\infty} u(t,a) \dd a.
\end{equation}
On the other hand, if $t\geq a_0$ and $a\geq a_0$, integrating along the characteristics in $[t-a_0,t]$ we obtain
\[
    u(t,a_0) = e^{-\int_0^{a_0} \mu(l) \dd l} u(t-a_0,0) = e^{-\int_0^{a_0} \mu(l) \dd l} f \left( \int_0^{+\infty} \beta(\sigma) u(t-a_0, \sigma) \dd \sigma \right).
\]
Therefore, letting $I(t) = \int_{a_0}^{+\infty} u(t,a) \dd a$, \eqref{eq:int_I} turns out to be
\begin{align*}
    I'(t) &= e^{-\int_0^{a_0} \mu(l) \dd l} b(t-a_0) - \mu_\infty I(t);
\end{align*}
whereas the integral equation for $b$ for $t\geq a_0$ can be rewritten as
\[
    b(t) = f\left( \int_0^{a_0} \beta(a) e^{-\int_0^a\mu(l) \dd l} b(t-a) \dd a + \beta_\infty I(t) \right).
\]
Therefore, under Assumption \ref{as:4}, it is apparent that there is a close relation between the dynamics of the Gurtin-MacCamy model and the following system which couples a differential equation and a nonlinear integral equation:
\begin{equation}\label{delay_sys}
    \begin{dcases}
    I'(t) = -\mu_\infty I(t) + e^{-\int_0^{a_0} \mu(l) \dd l}b(t-a_0), & t\geq 0, \\
    b(t) = f\left( \int_0^{a_0} \beta(a) e^{-\int_0^a \mu(l) \dd l} b(t-a) \dd a + \beta_\infty I(t) \right), & t\geq 0,
    \end{dcases}
\end{equation}
equipped with some initial condition $(I(0),b_0) = (\alpha,\varphi)\in \R_+ \times C_+^0([-a_0,0])$, where we have used the notation $b_t(\theta) = b(t+\theta)$, for $t\geq 0$ and $\theta\in [-a_0, 0]$. Hereinafter, for an element $\varphi\in C_+^0([-a_0,0])$ we denote by $\|\varphi\|$ the usual sup-norm of this element. System \ref{delay_sys} has exactly three equilibria $(\overline{I},\overline{b})$ (see \eqref{eq:norm_2}) given by
\[
    (0,0), \quad \left( \frac{c_0}{\mu_\infty} \kappa_1, \kappa_1 \right), \quad \text{and} \quad \left( \frac{c_0}{\mu_\infty} \kappa_2, \kappa_2 \right).
\]

Note that the first equation in \ref{delay_sys} can be solved directly for $I$ as
\[
    I(t) = e^{-\mu_\infty t} I(0) + c_0 \int_0^t e^{-\mu_\infty(t-s)} b(s-a_0) \dd a,
\]
with $c_0 := e^{-\int_0^{a_0} \mu(l) \dd l}$. Consequently, for a specified initial data $(\alpha,\varphi)\in \R_+ \times C_+^0([-a_0,0])$, we can determine the values of the solution $I(t)$ on the interval $[0,a_0]$. Then, applying a standard contraction argument, the existence of a solution for the second equation in \ref{delay_sys} on the interval $[0,a_0]$ follows. Subsequently, repeating this process yields the existence of a unique global continuous solution for the system. We can also obtain the continuous dependence of the solution on the initial data $(\alpha,\varphi)$ through an induction argument over the intervals of the form $[0,na_0]$, $n\in\N$ using the same techniques as in the preceding case; cf. Proposition \ref{prop:basics}. Let us summarize these properties in the following statement.

\begin{proposition}
    For every initial data $(\alpha,\varphi)\in \R_+ \times C_+^0([-a_0,0])$, the system \eqref{delay_sys} has a unique global continuous and nonnegative solution $(I(t),b(t))$ such that $(I(0),b_0)=(\alpha,\varphi)$. In addition, there is a continuous dependence on the initial data in the following sense: given $\varepsilon>0$, $(\alpha,\varphi)\in \R_+ \times C_+^0([-a_0,0])$ and $T>0$, then there is $\delta>0$ such that for every $(\tilde{\alpha},\tilde{\varphi})\in \R_+ \times C_+^0([-a_0,0])$ with $|\alpha - \tilde{\alpha}| + \|\varphi - \tilde{\varphi}\| < \delta$ then it holds $|I(t) - \tilde{I}(t)| + |b(t) - \tilde{b}(t)| < \varepsilon$ for all $t\in[0,T]$.
\end{proposition}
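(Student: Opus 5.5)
We will build the solution by the method of steps on the intervals $[na_0,(n+1)a_0]$, $n\in\N$, and then prove continuous dependence by induction over these intervals.

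\emph{Existence and uniqueness on $[0,a_0]$.} Fix $(\alpha,\varphi)\in\R_+\times C_+^0([-a_0,0])$ and put $c_0=e^{-\int_0^{a_0}\mu(l)\dd l}$. For $t\in[0,a_0]$ the delayed argument $b(t-a_0)$ equals $\varphi(t-a_0)$, so the first equation of \eqref{delay_sys} is linear with a known forcing term. Its solution is
\[
I(t)=e^{-\mu_\infty t}\alpha+c_0\int_0^t e^{-\mu_\infty(t-s)}\varphi(s-a_0)\dd s ,
\]
which is continuous and nonnegative. With $I$ known on $[0,a_0]$, the second equation becomes a Volterra equation of the form \eqref{eq:int_1}:
\[
b(t)=f\Big(\int_t^{a_0}\gamma(a)\varphi(t-a)\dd a+\int_0^t\gamma(a)b(t-a)\dd a+\beta_\infty I(t)\Big),\qquad \gamma(a)=\beta(a)e^{-\int_0^a\mu(l)\dd l}.
\]
Here $f$ is Lipschitz with constant $L=\sup f'$ and $\gamma\le\ovbeta$. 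On $C([0,a_0])$ with the weighted norm $\sup_t e^{-\omega t}|b(t)|$, $\omega>L\ovbeta$, the right-hand side is a contraction. Hence there is a unique continuous solution, and it is nonnegative because $f\ge0$.

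If $\varphi(0)$ does not match $b(0)$, the solution may jump at $t=0$. This matters only for regarding $b_t$ as an element of $C^0$. I would address it as in Section \ref{sec:threshold}: either restrict to the compatible closed subset, or read ``continuous'' as continuity on $[0,\infty)$ with $b_0=\varphi$ on $[-a_0,0]$.

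\emph{Extension to all times.} Take $(I(a_0),b_{a_0})$ as new data and repeat the argument on $[a_0,2a_0]$, and so on by induction. Uniqueness on each interval comes from the contraction, so the pieces glue into a unique global solution. Continuity across $t=na_0$ holds because both $I$ and the integral argument of $f$ are continuous there.

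\emph{Continuous dependence.} Compare the data $(\alpha,\varphi)$ and $(\tilde\alpha,\tilde\varphi)$ on $[0,a_0]$. The explicit formula gives
\[
|I(t)-\tilde I(t)|\le|\alpha-\tilde\alpha|+(c_0/\mu_\infty)\|\varphi-\tilde\varphi\|.
\]
The Lipschitz bound on $f$ then gives
\[
|b(t)-\tilde b(t)|\le L\Big(\ovbeta a_0\|\varphi-\tilde\varphi\|+\beta_\infty|I-\tilde I|(t)+\ovbeta\int_0^t|b-\tilde b|(s)\dd s\Big).
\]
Grönwall's inequality yields a bound $C_1(|\alpha-\tilde\alpha|+\|\varphi-\tilde\varphi\|)$ on $[0,a_0]$. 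Applying the same step to the data at time $a_0$ gives a bound $C_n(\cdot)$ on $[0,na_0]$. Choosing $n$ with $na_0\ge T$ and $\delta=\varepsilon/(2C_n)$ completes the argument.

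The main point to watch is the possible discontinuity at $t=0$, which the Grönwall estimates do not affect. Everything else is the routine method of steps already used in Proposition \ref{prop:basics}.
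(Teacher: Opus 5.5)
Your proposal is correct and follows the same route as the paper. Like the paper, you solve the linear equation for $I$ explicitly on $[0,a_0]$ from the history $\varphi$, get $b$ there by a contraction argument, extend by the method of steps, and obtain continuous dependence by a Gr\"onwall-type induction over the intervals $[0,na_0]$ with reference to Proposition~\ref{prop:basics}, supplying more detail than the paper's sketch (one small tidy-up: since $f$ is only defined on $\R_+$, run the contraction on the closed, invariant cone of nonnegative functions in $C([0,a_0])$ rather than on all of $C([0,a_0])$).
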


Concerning the boundedness properties, we have the following statement (c.f. Theorem \ref{th:bound}).

\begin{theorem}\label{conv_delay}
    The solutions of the system \eqref{delay_sys} are uniformly ultimately bounded. Indeed, let
    \[
        (I^*, b^*) = \left( \limsup_{t\to \infty} I(t), \limsup_{t\to \infty} b(t)  \right), \quad\text{and}\quad (I_*,b_*) = \left( \liminf_{t\to \infty} I(t), \liminf_{t\to \infty} b(t) \right),
    \]
    then
    \[
        b^*\in \{0\} \cup [\kappa_1, \kappa_2], \quad b_*\in [0,\kappa_1] \cup \{ \kappa_2 \}, \quad \text{and}\quad [I_*, I^*] \subseteq \left[ \frac{c_0}{\mu_\infty} b_*, \frac{c_0}{\mu_\infty} b^*\right],
    \]
    where $c_0 := e^{-\int_0^{a_0} \mu(l) \dd l}$. Moreover, there are constants $C_1,C_2>0$ depending only on $f$, $\beta$ and $\mu$ such that.  
    \[
        b(t) \leq f(C_1 + C_2 \| (\alpha, \varphi) \|),
    \]
    where $\|( \alpha, \varphi )\|$ denotes the usual product norm in $\R_+ \times C_+^0([-a_0,0])$.
\end{theorem}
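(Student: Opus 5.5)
The plan is to mimic the proof of Theorem \ref{th:bound}, working with the scalar quantity
\[
    B(t) := \int_0^{a_0} \beta(a) e^{-\int_0^a \mu(l)\dd l} b(t-a)\dd a + \beta_\infty I(t),
\]
so that $b(t) = f(B(t))$. The variation of constants formula
\[
    I(t) = e^{-\mu_\infty t}\alpha + c_0\int_0^t e^{-\mu_\infty(t-s)} b(s-a_0)\dd s
\]
turns $\beta_\infty I(t)$ into a convolution of $b$ against the kernel $\beta_\infty c_0 e^{-\mu_\infty(a-a_0)}$ on $a\geq a_0$, plus a contribution that decays exponentially in $t$. By \eqref{eq:norm_2}, this kernel together with $\gamma(a)=\beta(a)e^{-\int_0^a\mu}$ on $[0,a_0]$ has total mass exactly $1$. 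Hence $B$ satisfies
\[
    B(t) \leq \ovbeta \|\varphi\|\, \mathbf 1_{t\le a_0} + \beta_\infty e^{-\mu_\infty t}\alpha + \beta_\infty c_0\|\varphi\|\,\varepsilon(t) + \int_0^{t} K(a) f(B(t-a))\dd a ,
\]
with $\int K\le 1$ and $\varepsilon(t)$ bounded and decaying. This has exactly the structure of \eqref{eq:int_2}, with the initial-data forcing bounded by a multiple of $\|(\alpha,\varphi)\|$.

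\textbf{Uniform bound.} Take $\rho<1$ and $M$ with $f(x)\le\rho x$ for $x\ge M$. Splitting the convolution over $\{B\le M\}$ and $\{B>M\}$ gives
\[
    \max_{[0,T]}B \le C\|(\alpha,\varphi)\| + f(M) + \rho\max_{[0,T]}B ,
\]
hence
\[
    B\le \frac{C\|(\alpha,\varphi)\| + f(M)}{1-\rho} .
\]
Monotonicity of $f$ then yields $b(t)\le f(C_1+C_2\|(\alpha,\varphi)\|)$. Ultimate boundedness follows because, on the $\limsup$, the forcing terms vanish.

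\textbf{Limsup/liminf of $b$.} Set $B^*=\limsup B$. Choose $t_n\to\infty$ with $B(t_n)\to B^*$, fix $T_\varepsilon$ with $B\le B^*+\varepsilon$ beyond it, and split the convolution at $t_n-T_\varepsilon$. The tail part and the forcing vanish because the kernel is integrable and the forcing decays exponentially. This gives $B^*\le f(B^*)$, and likewise $B_*\ge f(B_*)$. The geometry of $f$ then places $B^*\in\{0\}\cup[\kappa_1,\kappa_2]$ and $B_*\in[0,\kappa_1]\cup\{\kappa_2\}$. Since $f$ is increasing and continuous, $b^*=f(B^*)$ and $b_*=f(B_*)$. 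The set $\{0\}\cup[\kappa_1,\kappa_2]$ is mapped into itself by $f$, because $f(\kappa_i)=\kappa_i$ and $f$ is increasing. Similarly $[0,\kappa_1]\cup\{\kappa_2\}$ is mapped into itself. This gives the claims on $b^*$ and $b_*$.

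\textbf{Bounds on $I$.} From the explicit formula for $I$, for any $\varepsilon>0$ and $t$ large,
\[
    I(t)\le e^{-\mu_\infty t}\alpha + c_0\int_0^{t}e^{-\mu_\infty(t-s)}b(s-a_0)\dd s \le o(1) + \frac{c_0}{\mu_\infty}(b^*+\varepsilon) ,
\]
again by splitting the integral at a large time. Symmetrically one gets $I_*\ge \frac{c_0}{\mu_\infty}(b_*-\varepsilon)$, which gives the inclusion $[I_*,I^*]\subseteq[\frac{c_0}{\mu_\infty}b_*,\frac{c_0}{\mu_\infty}b^*]$.

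\textbf{Main obstacle.} The hardest step is the bookkeeping in the first step: rewriting $B$ as a single convolution with a kernel of mass exactly one, via \eqref{eq:norm_2}. The initial segment $\varphi$ on $[-a_0,0]$ and the value $\alpha$ must produce only forcing terms that are bounded by $\|(\alpha,\varphi)\|$ and decay. Once that is in place, everything reduces verbatim to the proof of Theorem \ref{th:bound}.
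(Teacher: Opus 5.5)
Your proposal is correct and follows essentially the same route as the paper. The paper also introduces $B$ with $b=f(B)$, and it also uses the explicit formula for $I$ together with the normalization \eqref{eq:norm_2}. This gives the exact renewal identity \eqref{eq:B_2}, with kernel $\beta(a)e^{-\int_0^a\mu(l)\dd l}$ of mass one plus decaying forcing from $(\alpha,\varphi)$. The paper then repeats the splitting and the $\limsup/\liminf$ arguments of Theorem~\ref{th:bound} and the same estimate for $I$.

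One point should be made explicit. The $\liminf$ step uses that identity itself, where the forcing terms are nonnegative and the kernel has mass exactly $1$. The one-sided upper bound with $\int K\le 1$ that you displayed is not enough for that step.
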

\begin{proof}
Let $(I(t),b(t))$ be a solution to \eqref{delay_sys} associated to the initial data $(\alpha, \varphi)$. Let
\begin{align*}
    B(t) &:= \int_0^{a_0} \beta(a) e^{-\int_0^a \mu(l) \dd l} b(t-a) \dd a + \beta_\infty I(t)\\
    &= \int_0^t \tilde{\beta}(a) e^{-\int_0^a \mu(l) \dd l} b(t-a) \dd a + \sigma(t) + \beta_\infty I(t),
\end{align*}
for $t\geq 0$, where
\begin{equation}\label{eq:new_par}
    \tilde{\beta}(a) = \begin{cases}
        \beta(a), & a\in(a,a_0), \\
        0, & a>a_0,
    \end{cases}
    \quad \text{and}\quad 
    \sigma(t) = \begin{cases}
        \int_t^{a_0} \beta(a) e^{-\int_0^a \mu(l) \dd l} \varphi(t-a) \dd a, & t\in[0,a_0], \\
        0, & t>a_0.
    \end{cases}
\end{equation}
This definition is such that $b(t) = f(B(t))$ for $t\geq 0$, and $B$ solves the following equation
\begin{equation}
    B(t) = \int_0^t \tilde{\beta}(a) e^{-\int_0^a \mu(l) \dd l} f(B(t-a)) \dd a + \sigma(t) + \beta_\infty I(t), \quad t\geq 0. \label{eq:B_1}
\end{equation}
Alternatively, the equation for $I$ can be solved explicitly as
\begin{equation}\label{eq:sol_I}
    I(t) = \alpha e^{-\mu_\infty t} + c_0 \int_0^t e^{-\mu_\infty (t-s)} b(s-a_0) \dd s,
\end{equation}
with $c_0 = e^{-\int_0^{a_0} \mu(l) \dd l}$ as in the statement of the Theorem. Hence, from the definition of $B$ and \eqref{eq:sol_I} we have for $t\geq a_0$
\begin{align}
    B(t) &= \int_0^{a_0} \beta(a)e^{-\int_0^a \mu(l) \dd l} f(B(t-a)) \dd a + \beta_\infty \left[ \alpha e^{-\mu_\infty t} + c_0\int_0^t e^{-\mu_\infty (t-s)} b(s-a_0) \dd s \right] \nonumber \\
    &= \int_0^{a_0} \beta(a)e^{-\int_0^a \mu(l) \dd l} f(B(t-a)) \dd a + \beta_\infty \left[ \alpha e^{-\mu_\infty t} + c_0 \int_{a_0}^{t+a_0} e^{-\mu_\infty (s-a_0)} b(t-s) \dd s \right] \nonumber \\
    &= \int_0^{a_0} \beta(a)e^{-\int_0^a \mu(l) \dd l} f(B(t-a)) \dd a + \beta_\infty \left[ \alpha e^{-\mu_\infty t} + c_0 \int_{a_0}^{t+a_0} e^{-\int_{a_0}^s \mu(l) \dd l} b(t-s) \dd s \right] \nonumber \\
    &= \int_0^{a_0} \beta(a)e^{-\int_0^a \mu(l) \dd l} f(B(t-a)) \dd a + \alpha \beta_\infty e^{-\mu_\infty t} + \int_{a_0}^{t+a_0} \beta(a) e^{-\int_0^a \mu(l) \dd l} b(t-a) \dd a \nonumber \\
    &= \int_0^t \beta(a) e^{-\int_0^a \mu(l) \dd l} f(B(t-a)) \dd a + \alpha\beta_\infty e^{-\mu_\infty t} + \int_t^{t+a_0} \beta(a) e^{-\int_0^a \mu(l) \dd l} \varphi(t-a) \dd a. \label{eq:B_2}
\end{align}

Let $\rho\in (0,1)$, $M>0$ and $K$ be as in the proof of Theorem \ref{th:bound}. Let also $T\geq a_0$ be given. From \eqref{eq:B_1} and \eqref{eq:sol_I}, the following estimates follow for $t\in[0,a_0]$
\begin{align*}
    B(t) &\leq \int_0^t \beta(a) e^{-\int_0^{a} \mu(l) \dd l} f(B(t-a)) \dd a + \ovbeta \| \varphi \| \int_t^{a_0} e^{-\unmu a} \dd a + \beta_\infty \left[ \alpha e^{-\mu_\infty t} + c_0 \|\varphi\| e^{-\mu_\infty t} \int_0^t e^{\mu_\infty s} \dd s \right] \\
    &\leq \ovbeta\|\varphi\| a_0 + \alpha \beta_\infty + c_0 \beta_\infty \| \varphi \| t + \left( \int_{[0,t]\cap K^c} + \int_{[0,t] \cap K} \right) \beta(t-a) e^{-\int_0^{t-a} \mu(l) \dd l} f(B(a)) \dd a \\
    &\leq a_0 \ovbeta \|\varphi\| + \alpha\beta_\infty + a_0c_0\beta_\infty \| \varphi \| + \left( f(M) + \rho \max_{s\in [0,T]} B(s) \right) \int_0^t \beta(t-a) e^{-\int_0^{t-a} \mu(l) \dd l} \dd a \\
    &\leq a_0 \ovbeta \|\varphi\| + \alpha\beta_\infty + a_0c_0\beta_\infty \| \varphi \| + f(M) + \rho \max_{s\in[0,T]} B(s).
\end{align*}
Analogously, from \eqref{eq:B_2}, the following estimates hold for $t\in[a_0,T]$
\begin{align*}
    B(t) &\leq \alpha \beta_\infty e^{-\mu_\infty t} + \ovbeta \|\varphi\| \int_t^{t+a_0} e^{-\unmu a} \dd a + \int_0^t \beta(a) e^{-\int_0^a \mu(l) \dd l} f(B(t-a)) \dd a \\
    &\leq \alpha \beta_\infty + a_0 \ovbeta \|\varphi\| + f(M) + \rho \max_{s\in [0,T]} B(s),
\end{align*}
and therefore we infer that
\[
    \max_{t\in [0,T]} B(t) \leq \frac{f(M) + \alpha \beta_\infty + (a_0\ovbeta + a_0c_0\beta_\infty ) \| \varphi \|}{1-\rho},
\]
which shows the boundedness of $B$ since the right-hand side is independent of $T$. Consequently, $b$ and $I$ are also bounded, and the bound for $b$ follows from that one for $B$.

In addition, using \eqref{eq:B_2} and the same arguments as in the proof of Theorem \ref{th:bound}, we deduce that
\[
    \limsup_{t\to \infty} B(t) \in \{ 0 \} \cup [\kappa_1, \kappa_2], \quad\text{and}\quad \liminf_{t\to\infty} B(t) \in[0,\kappa_1] \cup \{ \kappa_2 \},
\]
and thanks to the assumptions made on $f$ the same conclusion holds for $b$. For the conclusion about the asymptotic behavior of $I$, let $(t_n)_{n\in \N}$ be a sequence of real numbers such that $t_n\to \infty$ and $I(t_n) \to I^*$. Let also $\varepsilon>0$ be given and $T_\varepsilon>0$ such that $b(t) \leq b^* + \varepsilon$ for $t\geq T_\varepsilon$. Then, starting from its representation given in terms of $b$ \eqref{eq:sol_I}, we get for $n$ large enough that
\begin{align*}
    I(t_n) &\leq \alpha e^{-\mu_\infty t_n} + c_0 \left( \int_0^{T_\varepsilon + a_0}e^{-\mu_\infty(t_n-s)} b(s-a_0) \dd s + (b^*+\varepsilon) \int_{T_\varepsilon+a_0}^{t_n} e^{-\mu_\infty(t_n-s)} \dd s \right) \\
    &= \alpha e^{-\mu_\infty t_n} + c_0 \left( \int_0^{T_\varepsilon + a_0}e^{-\mu_\infty(t_n-s)} b(s-a_0) \dd s + (b^*+\varepsilon) \int_0^{t_n - T_\varepsilon - a_0} e^{-\mu_\infty s} \dd s \right).
\end{align*}
Thus, taking $n\to \infty$ it follows that $I^* \leq \frac{c_0}{\mu_\infty} (b^*+\varepsilon)$, and since $\varepsilon$ is arbitrary, we infer that $I^* \leq \frac{c_0}{\mu_\infty} b^*$. The estimate for $I_*$ is proved in the same fashion (cf. Theorem \ref{th:bound}).
\end{proof}

The proof of the above theorem shows that it is possible to rewrite the equation for $b$ as a Volterra equation, mainly
\begin{equation}\label{eq:alter_b}
    b(t) = f\left( \int_0^t \tilde{\beta}(a) e^{-\int_0^a \mu(l) \dd l} b(t-a) \dd a + \sigma(t) + \beta_\infty I(t) \right),
\end{equation}
with $\tilde{\beta}$ and $\sigma$ as in \eqref{eq:new_par}. Using this form of the equation, and since the solution of the system \eqref{delay_sys} is obtained by a contraction mapping argument, it is possible to extend the results of the last paragraph of Section \ref{sec:prelim}. We summarize those results as follows.

\begin{proposition}\label{comparison}
If $(\alpha, \varphi),(\tilde{\alpha},\tilde{\varphi})\in \R_+ \times C_+^0([-a_0,0])$ with $(\alpha, \varphi) \leq (\tilde{\alpha}, \tilde{\varphi})$ are given (where the order relation is understood component-wise), then the associated solutions of the system \eqref{delay_sys} denoted by $(I(t),b(t))$ and $(\tilde{I}(t), \tilde{b}(t))$ verify
\[
    (I(t),b(t)) \leq (\tilde{I}(t), \tilde{b}(t)), \quad t\geq 0.
\]
Besides, let $(\alpha, \varphi)\in \R_+ \times C_+^0([-a_0,0])$ be given such that $(\alpha, \varphi) \geq(\leq) \left( \frac{c_0}{\mu_\infty} \kappa_1, \kappa_1 \right)$ but $(\alpha, \varphi) \neq \left( \frac{c_0}{\mu_\infty} \kappa_1, \kappa_1 \right)$. Then the associated solution $(I(t),b(t))$ verifies
\[
    \lim_{t\to \infty}(I(t),b(t)) = \left( \frac{c_0}{\mu_\infty} \kappa_2, \kappa_2 \right) \big( (0,0)\big).
\]
\end{proposition}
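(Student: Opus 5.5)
For the comparison part I would mirror the argument used for the comparison principle in Section \ref{sec:prelim}, working interval by interval on $[na_0,(n+1)a_0]$, $n\ge0$, since this is how the solution of \eqref{delay_sys} is built.

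\emph{First interval.} On $[0,a_0]$ the delayed argument $b(s-a_0)$ only involves the initial histories $\varphi\le\tilde\varphi$. The explicit formula \eqref{eq:sol_I} then gives $I(t)\le\tilde I(t)$ directly, using $\alpha\le\tilde\alpha$, $c_0>0$ and the positivity of the exponential kernel. With $I\le\tilde I$ fixed on this interval, $b$ is the fixed point of the operator defined by the right-hand side of \eqref{eq:alter_b}. By monotonicity of $f$ and $\sigma\le\tilde\sigma$ (from $\varphi\le\tilde\varphi$, $\beta\ge0$), this operator satisfies $\AA(b)\le\tilde\AA(b)$ and is order preserving. Iterating from $b$, as in the proof of the comparison principle, gives $b\le\tilde\AA^n(b)\to\tilde b$, hence $b\le\tilde b$ on $[0,a_0]$.

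\emph{Induction.} Assume $b\le\tilde b$ on $[-a_0,na_0]$ and $I\le\tilde I$ on $[0,na_0]$. Formula \eqref{eq:sol_I} gives $I\le\tilde I$ on $[na_0,(n+1)a_0]$, and the same fixed-point argument on the next interval gives $b\le\tilde b$ there. This yields the first claim for all $t\ge0$.

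For the convergence part I take the case $(\alpha,\varphi)\ge(\frac{c_0}{\mu_\infty}\kappa_1,\kappa_1)$ with strict inequality somewhere; the other case is symmetric. Comparison with the equilibrium gives $b\ge\kappa_1$ and $I\ge\frac{c_0}{\mu_\infty}\kappa_1$ for all $t$. Set $\xi=b-\kappa_1$ and $\eta=I-\frac{c_0}{\mu_\infty}\kappa_1$, and rewrite $B$ via \eqref{eq:B_2} as a single Volterra equation with kernel $\beta(a)e^{-\int_0^a\mu}$ on $(0,\infty)$ (normalized by \eqref{eq:norm_2}) plus a forcing term. This forcing term is built from $\alpha\beta_\infty e^{-\mu_\infty t}$ and the history of $\varphi$. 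After the shift, $\xi$ solves
\[
\xi(t)=g\Big(\int_0^t\gamma(a)\xi(t-a)\dd a+\tilde\sigma(t)\Big),\qquad g(x)=f(x+\kappa_1)-\kappa_1,
\]
where $\gamma(a)=\beta(a)e^{-\int_0^a\mu}$ and $\tilde\sigma\ge0$, exactly as in the proof of Corollary \ref{cor:conv}.

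\emph{Main obstacle: $\xi\not\equiv0$.} If $\varphi\ne\kappa_1$ on an open interval, then $\tilde\sigma>0$ somewhere. This uses $\beta(a)=\beta_\infty>0$ for $a>a_0$, so the whole history is seen. If only $\alpha>\frac{c_0}{\mu_\infty}\kappa_1$, then the term $\beta_\infty\eta(0)e^{-\mu_\infty t}>0$ already forces $\xi(0)>0$, since $f'>0$.

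\emph{Conclusion.} Since $g$ is monostable on $\R_+$, the persistence result of \cite{MR4750985} (Theorem 1) together with the bounds of Theorem \ref{conv_delay} gives $b(t)\to\kappa_2$. The inclusion $[I_*,I^*]\subseteq[\frac{c_0}{\mu_\infty}b_*,\frac{c_0}{\mu_\infty}b^*]$ from Theorem \ref{conv_delay} then forces $I(t)\to\frac{c_0}{\mu_\infty}\kappa_2$. In the case $(\alpha,\varphi)\le(\frac{c_0}{\mu_\infty}\kappa_1,\kappa_1)$, applying the same argument to $\kappa_1-b$ gives convergence to $(0,0)$.
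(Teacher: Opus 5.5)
Your proposal is correct and follows the paper's proof. The comparison part is the same interval-by-interval argument: the explicit formula for $I$ on $[0,a_0]$, monotone iteration of the fixed-point operator, then induction. The convergence part is the paper's reduction to Corollary~\ref{cor:conv}, i.e.\ shifting by $\kappa_1$ to a monostable Volterra equation and invoking the persistence result of \cite{MR4750985} together with the bounds of Theorem~\ref{conv_delay}. You also supply details the paper leaves implicit, namely the single-kernel reformulation via \eqref{eq:B_2} and the check that $\xi\not\equiv 0$ in both sub-cases.
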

\begin{proof}
Let $(\alpha,\varphi),(\tilde{\alpha},\tilde{\varphi})\in \R_+ \times C_+^0([-a_0,0])$ be given, with $(\alpha,\varphi) \leq (\tilde{\alpha},\tilde{\varphi})$, and let $(I(t),b(t))$ and $(\tilde{I}(t), \tilde{b}(t))$ be the associated solutions. In addition, consider the operator $\AA_\varphi \colon C_+^0([0,a_0]) \to C_+^0([0,a_0])$ defined by
\[
    \AA_\varphi(\psi)(t) = f\left( \int_0^t \beta(a) e^{-\int_0^a \mu(l) \dd l} \psi(t-a) \dd a + \sigma(t) + \beta_\infty I(t) \right),
\]
with $\sigma$ as in \eqref{eq:new_par}. Define similarly $\AA_{\tilde{\varphi}} \colon C_+^0([0,a_0]) \to C_+^0([0,a_0])$. Observe that the solutions $b$ and $\tilde{b}$ are fixed points of the operators $\AA_\varphi$ and $\AA_{\tilde{\varphi}}$, respectively. Then, from the explicit representation for $I$ and $\tilde{I}$ on $[0,a_0]$ we see
\begin{align*}
    I(t) &= \alpha e^{-\mu_\infty t} + c_0 \int_0^t e^{-\mu_\infty(t-s)} \varphi(s-a_0) \dd s \\
    &\leq \tilde{\alpha}e^{-\mu_\infty t} + c_0 \int_0^t e^{-\mu_\infty(t-s)} \tilde{\varphi}(s-a_0) \dd s = \tilde{I}(t),
\end{align*}
for $t\in [0,a_0]$. Therefore, due to the monotonicity of $f$ it follows
\begin{align*}
    b(t) = \AA_\varphi(b)(t) \leq \AA_{\tilde{\varphi}}(b)(t),
\end{align*}
which, jointly with an induction argument, implies $b \leq \AA_{\tilde{\varphi}}(b)$. Since $\tilde{b}$ is constructed by a contraction mapping theorem this implies that $b \leq \tilde{b}$ over $[0,a_0]$. The result for all $t\geq 0$ is obtained by induction.

The last claim in the Theorem follows as in the proof of Corollary \ref{cor:conv}, thanks to the comparison principle just stated.
\end{proof}

\subsection{The dynamical system formulation}

Just as in Section \ref{th:threshold}, for a given initial data $(\alpha, \varphi)\in \R_+ \times C_+^0([-a_0,0])$, the associated solution for the second equation in \eqref{delay_sys} may have a discontinuity at $t=0$ provided
\[
    \varphi(0) \neq f\left( \int_0^{a_0} \beta(a) e^{-\int_0^a\mu(l) \dd l} \varphi(-a) \dd a + \alpha \beta_\infty \right),
\]
while the solution $I$ is a continuous function on $[-a_0,+\infty)$. Consequently, for the purpose of setting the problem in a dynamical system framework, let us consider the following closed subset $Y\subseteq \R_+ \times C_+^0([-a_0,0])$
\begin{equation}\label{def:Y}
    Y:= \left\{ (\alpha, \varphi)\in \R_+ \times C_+^0([-a_0,0]) \colon f\left( \int_0^{a_0} \beta(a) e^{-\int_0^a\mu(l) \dd l} \varphi(-a) \dd a + \alpha \beta_\infty \right) = \varphi(0)  \right\},
\end{equation}
which contains all the initial conditions giving rise to a continuous solution $b$ for $[-a_0,+\infty)$. Thus, we can consider the continuous semiflow generated by the system \eqref{delay_sys} on $Y$, say, $\Psi \colon \R_+ \times Y \to Y$ and given by $\Psi(t,(\alpha, \varphi)) = (I(t),b_t)$, where $I$ and $b$ are the associated solutions to the system. The equilibrium points for this semiflow are the constant solutions to \eqref{delay_sys}, that is $(0,0)$, $\left( \frac{c_0}{\mu_\infty} \kappa_1, \kappa_1 \right)$ and $\left( \frac{c_0}{\mu_\infty} \kappa_2, \kappa_2 \right)$. Because of the comparison principle established in the previous paragraph, the semiflow $\Psi$ is monotone and, as we shall see, is eventually strongly monotone and eventually compact. Indeed, the proof of the next proposition follows the same lines as in Proposition \ref{compact_semifl} by using the comparison principle and the alternative equation for $b$ \eqref{eq:alter_b}.

\begin{proposition}\label{compac_delay}
The semiflow $\Psi$ is eventually strongly monotone and eventually compact. That is, it satisfies the following statements.
\begin{enumerate}[label=(\roman*)]
    \item Let $(\alpha, \varphi) \leq (\tilde{\alpha},\tilde{\varphi})$ be two elements in $Y$ given with $(\alpha, \varphi) \neq (\tilde{\alpha},\tilde{\varphi })$. Then there is some $t_0>0$ such that $\Psi(t_0,(\alpha, \varphi)) \ll \Psi(t_0,(\tilde{\alpha},\tilde{\varphi}))$. As a matter of fact, $\Psi(t, (\alpha, \varphi)) \ll \Psi(t,(\tilde{\alpha},\tilde{\varphi}))$ for all $t\geq t_0$.
    \item For every bounded subset $B\subseteq Y$, $\Psi(t,B)$ has compact closure for $t\geq a_0$.
\end{enumerate}
Furthermore, there exists $\AA\subset Y$ a compact attractor of bounded sets for the semiflow $\Psi$.
\end{proposition}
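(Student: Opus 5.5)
I will follow the structure of the proof of Proposition \ref{compact_semifl}, working with the alternative Volterra form \eqref{eq:alter_b} for $b$ together with the explicit formula \eqref{eq:sol_I} for $I$. The order on $Y$ is the product order on $\R_+\times C_+^0([-a_0,0])$, so $(\alpha,\varphi)\ll(\tilde\alpha,\tilde\varphi)$ means $\alpha<\tilde\alpha$ and $\varphi\ll\tilde\varphi$.

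\textbf{Eventual strong monotonicity.} Fix the two ordered, distinct initial data. By Proposition \ref{comparison}, $\xi:=\tilde b-b\geq0$ and $\zeta:=\tilde I-I\geq 0$. Theorem \ref{conv_delay} bounds both solutions, so the argument of $f$ stays in some $[0,K]$. Hence there is $\delta>0$ with $f'\geq\delta$ on $[0,K]$, and the mean value theorem gives
\[
\xi(t)\geq \delta\Big(\int_0^{t}\tilde\beta(a)e^{-\int_0^a\mu(l)\dd l}\xi(t-a)\dd a+\tilde\sigma(t)-\sigma(t)+\beta_\infty\zeta(t)\Big),
\qquad
\zeta(t)=(\tilde\alpha-\alpha)e^{-\mu_\infty t}+c_0\int_0^te^{-\mu_\infty(t-s)}\xi(s-a_0)\dd s .
\]
I then split into two cases.
\begin{itemize}
\item If $\tilde\alpha>\alpha$, then $\zeta(t)>0$ for all $t\geq0$. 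Hence $\xi(t)>0$ for all $t\geq0$.
\item If $\tilde\alpha=\alpha$, then $\varphi<\tilde\varphi$, so $\tilde\varphi-\varphi>0$ on some interval $J\subset[-a_0,0]$. Then $\zeta(t)\geq c_0\int_0^te^{-\mu_\infty(t-s)}(\tilde\varphi-\varphi)(s-a_0)\dd s>0$ for $t>\inf J+a_0$. The term $\beta_\infty\zeta$ then forces $\xi(t)>0$ for all such $t$.
\end{itemize}
This is the point where the noncompactly supported tail is helpful: the variable $I$ carries a positive mass forward for all time. It avoids the measure-theoretic care about $\supp\beta$ used in Proposition \ref{compact_semifl}, and no appeal to Corollary B.6 of \cite{MR2731633} is needed.

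In either case $\xi>0$ on some half-line $(t_1,\infty)$. Once that holds, $\zeta(t)>0$ for $t>t_1+a_0$. Taking $t_0=t_1+a_0$, we get $\tilde b_t\gg b_t$ and $\tilde I(t)>I(t)$ for all $t\geq t_0$, which is the claimed $\Psi(t,\cdot)\ll\Psi(t,\cdot)$.

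The main obstacle I anticipate is ensuring that positivity of $\xi$ persists for \emph{all} later times rather than only at isolated points. This is settled by the strict positivity of $\zeta$ once it becomes positive, since the factor $e^{-\mu_\infty t}$ never vanishes.

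\textbf{Eventual compactness.} Let $B\subset Y$ be bounded. By Theorem \ref{conv_delay}, the solutions $(I,b)$ are uniformly bounded on $\R_+$ over $B$. The $I$-components form a bounded set in $\R$, hence a relatively compact one. For the $b$-components on $[0,a_0]$, I estimate $|b(t+h)-b(t)|\leq L\,|\text{difference of arguments}|$, where $L$ is a Lipschitz constant of $f$. The convolution term is equicontinuous by continuity of translation in $\L^1$, exactly as in Proposition \ref{compact_semifl}. The $I$ term is uniformly Lipschitz because $|I'|\leq \mu_\infty|I|+c_0|b|$ is uniformly bounded. The $\sigma$ term is $\int_t^{a_0}\gamma(a)\varphi(t-a)\dd a$, which is equicontinuous in $t$ uniformly over bounded $\varphi$. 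Arzelà--Ascoli then shows that $\{b_{a_0}\}$ is relatively compact in $C([-a_0,0])$. The semiflow property extends this to all $t\geq a_0$.

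\textbf{Attractor.} The eventual compactness yields asymptotic smoothness. Together with the uniform ultimate boundedness from Theorem \ref{conv_delay}, which makes the semiflow point dissipative with bounded orbits of bounded sets, Theorem 2.33 of \cite{MR2731633} gives the compact attractor of bounded sets, exactly as in Corollary \ref{attractor}.
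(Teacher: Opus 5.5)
Your proposal is correct and follows essentially the same route as the paper. It uses the same split into the cases $\tilde\alpha>\alpha$ and $\tilde\alpha=\alpha$, $\varphi<\tilde\varphi$; it obtains strict positivity of $\tilde I-I$ from $(\tilde\alpha-\alpha)e^{-\mu_\infty t}$ or from $c_0\int_0^t e^{-\mu_\infty(t-s)}\xi(s-a_0)\,\dd s$ and passes it to $\tilde b-b$ through the strict monotonicity of $f$; and it then runs the Arzel\`a--Ascoli and attractor arguments of Proposition \ref{compact_semifl} and Corollary \ref{attractor}. Your version is simply more detailed, and one small fix is needed: since you only get $\xi>0$ on the open half-line $(t_1,\infty)$, take $t_0>t_1+a_0$ rather than $t_0=t_1+a_0$.
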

\begin{proof}
For the first part, let $(\alpha, \varphi) \leq (\tilde{\alpha},\tilde{\varphi})$ be two elements in $Y$ given with $(\alpha, \varphi) \neq (\tilde{\alpha},\tilde{\varphi })$; and net $(I(t),b(t))$ and $(\tilde{I}(t), \tilde{b}(t))$ be the associated solutions.  Assume first that $\alpha < \tilde{\alpha}$. Then, we can see directly from the explicit formula for $I$ and $\tilde{I}$ such that $I(t) < \tilde{I}(t)$ for all $t\geq 0$ because of the comparison principle stated in Proposition \ref{comparison}. Therefore, since $f$ is strictly monotone, we deduce from the second equation in the system \eqref{delay_sys} that $b(t) < \tilde{b}(t)$ for all $t\geq 0$. Therefore, $\Psi(t,(\alpha,\varphi)) \ll \Psi(t,(\tilde{\alpha},\tilde{\varphi}))$ for all $t\geq a_0$.

On the other hand, suppose that $\varphi < \tilde{\varphi}$. As the computation of the values $I(t)$ and $\tilde{I}(t)$ need all the history of $b$ and $b(t)$ before $t-a_0$ as can be seen in the representation of these solutions, then $I(t) < \tilde{I}(t)$ for all $t\geq a_0$. Then, just as in the paragraph above, it follows that $b(t) < \tilde{b}(t)$ for all $t\geq a_0$, from where the first part of the proposition is proved.

The following statements in the proposition are proved in the same way as was done in Proposition \ref{compact_semifl} and Corollary \ref{attractor}.
\end{proof}

Recall that in the compactly supported case we defined the separatrix in terms of oscillations of the solution to the integral equation around the intermediate equilibria, or more precisely, in terms of intersections with it. For the non-compactly supported case, it is not possible to characterize the separatrix similarly. Mainly, note that the function $I$ tracks all the past values of $b$ on the interval $[-a_0,t-a_0]$, and therefore the knowledge about the eventual behavior of $b$ (namely, if its position is above or below the unstable equilibria for large times) does not provide any further information about the position of $I$ around the point $\frac{c_0}{\mu_\infty} \kappa_1$ $\kappa_1/\mu$. Hence, to describe the separatrix, we will use the fact that the system possesses exactly three equilibria, and the middle one is unstable. To this aim, let $A_0$ and $A_{\kappa_2}$ be the basins of attraction for the equilibrium points $(0,0)$ and $(\kappa_2/\mu, \kappa_2)$, namely
\begin{align*}
    A_0 &= \{ (\alpha, \varphi)\in Y \colon \Psi(t,(\alpha, \varphi)) \to (0,0)~\text{as}~ t\to\infty \}, \\
    A_{\kappa_2} &= \left\{ (\alpha, \varphi)\in Y \colon \Psi(t,(\alpha, \varphi)) \to \left( \frac{c_0}{\mu_\infty}\kappa_2, \kappa_2 \right) ~\text{as} ~t\to\infty \right\}.
\end{align*}
These sets are open thanks to the continuity of the semiflow and the local stability of the equilibria given in Proposition \ref{comparison}.

\begin{proposition}
The set $Y \setminus (A_0 \cup A_{\kappa_2})$ is totally unordered.
\end{proposition}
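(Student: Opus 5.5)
We argue by contradiction, mirroring the proof of Proposition \ref{unord_S}. Let $S_Y := Y\setminus(A_0\cup A_{\kappa_2})$ and suppose $(\alpha,\varphi),(\tilde\alpha,\tilde\varphi)\in S_Y$ with $(\alpha,\varphi)<(\tilde\alpha,\tilde\varphi)$. By Proposition \ref{compac_delay}(i) there is $t_0>0$ with $x:=\Psi(t_0,(\alpha,\varphi))\ll \tilde x:=\Psi(t_0,(\tilde\alpha,\tilde\varphi))$. The set $S_Y$ is forward invariant, since $A_0$ and $A_{\kappa_2}$ are. Hence both $x$ and $\tilde x$ lie in $S_Y$.

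\textbf{Step 1: two convergent intermediate points.} Since $\Psi$ is eventually strongly monotone, it is strongly order-preserving. It is also eventually compact with a compact attractor (Proposition \ref{compac_delay}), and the order cone of $\R\times C^0([-a_0,0])$ has nonempty interior. So the generic quasiconvergence theorem (Theorem 4.3, Chapter 1 in \cite{MR1319817}) applies. It gives points $y,\tilde y\in Y$ with $x\ll y\ll\tilde y\ll\tilde x$ whose $\omega$-limit sets consist of equilibria. Here I must check that the order interval $[x,\tilde x]\cap Y$ contains a set that is open in $Y$, so that density of quasiconvergent points produces such $y,\tilde y$. I will handle this exactly as in Proposition \ref{unord_S}, noting that $Y$ is a graph-like closed set in which the constraint only fixes $\varphi(0)$. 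The equilibria are isolated and $\omega$-limit sets are connected, so $y$ and $\tilde y$ converge to equilibria. If $y$ converged to $(0,0)$, then $y\in A_0$; by monotonicity $x\le y$, so $\Psi(t,x)$ is squeezed between $0$ and $\Psi(t,y)\to 0$, giving $x\in A_0$, a contradiction. The symmetric argument for $\tilde y$ and $\tilde x$ rules out convergence to the upper equilibrium. Also $y$ cannot converge to the upper equilibrium, since $\tilde y\ge y$ would then converge to it too, and similarly for the lower one. Hence both $y$ and $\tilde y$ converge to $E_1:=(\frac{c_0}{\mu_\infty}\kappa_1,\kappa_1)$.

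\textbf{Step 2: contradiction via linearization at $E_1$.} Let $(I,b)$ and $(\tilde I,\tilde b)$ be the solutions issued from $y$ and $\tilde y$, and set $\xi=\tilde b-b>0$ and $\eta=\tilde I-I>0$ (both positive by strong monotonicity). Both tend to $0$, and they satisfy
\[
\eta'(t)=-\mu_\infty\eta(t)+c_0\xi(t-a_0),\qquad \xi(t)=g(t)\Big(\int_0^{a_0}\gamma(a)\xi(t-a)\dd a+\beta_\infty\eta(t)\Big),
\]
with $\gamma(a)=\beta(a)e^{-\int_0^a\mu}$ and $g(t)\to f'(\kappa_1)>1$. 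Fix $\rho\in(1,f'(\kappa_1))$ and $T$ with $g\ge\rho$ on $[T,\infty)$. Substituting the variation-of-constants formula for $\eta$ gives, for large $t$,
\[
\xi(t)\ge\rho\Big(\int_0^{t}\beta(a)e^{-\int_0^a\mu}\xi(t-a)\dd a + r(t)\Big),\qquad r\ge 0,
\]
as in \eqref{eq:B_2}, where the kernel has total mass $1$ by \eqref{eq:norm_2}.

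\textbf{Main obstacle.} The difficulty is that the kernel now has infinite memory. The minimum trick of Proposition \ref{unord_S} must therefore be run on the weighted quantity $m(t)=\min_{s\in[T',t]}\xi(s)$, where the mass of the kernel beyond $t-T'$ is controlled. My first attempt is this. Choose $t_n\to\infty$ with $\xi(t_n)=\min_{[T',t_n]}\xi$, which is possible since $\xi>0$ and $\xi\to0$. This yields
\[
\xi(t_n)\ge\rho\,\xi(t_n)\int_0^{t_n-T'}\beta(a)e^{-\int_0^a\mu}\dd a .
\]
Taking $T'$ fixed and $n$ large makes the integral exceed $1/\rho'$ for some $\rho'\in(1,\rho)$. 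This gives $\xi(t_n)\ge(\rho/\rho')\xi(t_n)$, which is impossible because $\xi(t_n)>0$. This contradiction proves that $S_Y$ is totally unordered.
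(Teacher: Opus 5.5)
Your proposal is correct and follows essentially the same route as the paper. You use eventual strong monotonicity plus generic quasiconvergence to produce $y\ll\tilde y$ converging to $\bigl(\frac{c_0}{\mu_\infty}\kappa_1,\kappa_1\bigr)$, then the \eqref{eq:B_2}-type identity for $\tilde B-B$ and the running-minimum argument against the normalization \eqref{eq:norm_2}; taking the minimum over $[T',t_n]$ rather than $[0,t_n]$ is harmless but unnecessary, since $\xi>0$ on all of $[0,\infty)$.
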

\begin{proof}
The proof follows the same arguments given for Proposition \ref{unord_S}. Let $(\alpha, \varphi)\leq (\tilde{\alpha}, \tilde{\varphi})$ be two distinct elements in $Y\setminus (A_0 \cup A_{\kappa_2})$. As $\Psi$ is an eventually strongly monotone semiflow there is a sufficiently large $t_0>0$ such that $\Psi(t, (\alpha, \varphi)) \ll \Psi(t, (\tilde{\alpha}, \tilde{\varphi}))$ for each $t\geq t_0$. Then, as the quasiconvergence is generic (see Theorem 4.3 in \cite{MR1319817}) we may take $(\eta,\rho) \ll (\tilde{\eta},\tilde{\rho})$ two elements in $Y$ such that $\omega((\eta,\rho)), \omega((\tilde{\eta},\tilde{\rho})) \subseteq \left\{ (0,0), \left( \frac{c_0}{\mu_\infty}\kappa_1, \kappa_1 \right) , \left( \frac{c_0}{\mu_\infty}\kappa_2, \kappa_2 \right) \right\}$ and $\Psi(t_0,(\alpha, \varphi)) \ll (\eta,\rho) \ll (\tilde{\eta},\tilde{\rho}) \ll \Psi(t_0, (\tilde{\varphi}, \tilde{\psi})$. Since the initial functions $(\alpha, \varphi)$ and $(\tilde{\alpha}, \tilde{\varphi})$ do not converge to any of the stable equilibria, we deduce that
\begin{equation}\label{eq:conv_psi}
    \Psi(t,(\eta,\rho))\to \left( \frac{c_0}{\mu_\infty}\kappa_1, \kappa_1 \right) \quad\text{and} \quad \Psi(t,(\tilde{\eta}, \tilde{\rho})) \to \left( \frac{c_0}{\mu_\infty}\kappa_1, \kappa_1 \right),
\end{equation}
as $t\to\infty$. Let $(I(t),b(t))$ and $(\tilde{I}(t), \tilde{b}(t))$ be the solutions of \eqref{delay_sys} with initial data $(\eta,\rho)$ and $(\tilde{\eta},\tilde{\rho})$, respectively, and let $\xi(t) = \tilde{b}(t) - b(t)$. Moreover, set
\[
    B(t) = \int_0^{a_0} \beta(a) e^{-\int_0^a \mu(l) \dd l} b(t-a) \dd a + \beta_\infty I(t), \quad t\geq 0,
\]
and an analogous definition for $\tilde{B}$. Then, the respective equations satisfied by $b$ and $\tilde{b}$ can be rewritten as $b(t) = f(B(t))$ and $\tilde{b}(t) = f(\tilde{B}(t))$, respectively. Hence, $\xi(t) = g(t)(\tilde{B}(t) - B(t))$, where
\[
    g(t) = \int_0^1 f' (s \tilde{B}(t) + (1-s) B(t)) \dd s,
\]
and due to \eqref{eq:conv_psi} we infer that $g(t) \to f'(\kappa_1)$ as $t\to \infty$.

Let $\delta \in (1,f'(\kappa_1))$, and note that there exists $T>0$ such that $g(t) \geq \delta$ for all $t\geq T$. Moreover, as in the proof of Theorem \ref{conv_delay} (see \eqref{eq:B_2}) we deduce
\begin{align*}
    \tilde{B}(t) - B(t) &= \int_0^{a_0} \beta(a) e^{-\int_0^a \mu(l) \dd l} \xi(t-a) \dd a + \beta_\infty (\tilde{I}(t) - I(t)) \\
    &= \int_0^{a_0} \beta(a) e^{-\int_0^a \mu(l) \dd l} \xi(t-a) \dd a + \beta_\infty \left[ (\tilde{\eta} - \eta) e^{-\mu_\infty t} + c_0 \int_0^t e^{-\mu_\infty(t-s)} \xi(s-a_0) \dd s \right] \\
    &= \int_0^t \beta(a) e^{-\int_0^a \mu(l) \dd l} \xi(t-a) \dd a + \beta_\infty(\tilde{\eta} - \eta) e^{-\mu_\infty t} \\
    & \qquad + \int_t^{t+a_0} \beta(a) e^{-\int_0^a \mu(l) \dd l} (\tilde{\varphi}(t-a) - \varphi(t-a)) \dd a \\
    &\geq \int_0^t \beta(a) e^{-\int_0^a \mu(l) \dd l} \xi(t-a) \dd a
\end{align*}
Then, as $\xi(t)>0$ for all $t\geq 0$ (see the proof of Proposition \ref{compac_delay}), and it vanishes at infinity, we can take $(t_n)_{n\in \N}$ a sequence of positive real numbers such that $t_n\to \infty$ and $\xi(t_n) = \min_{t\in[0,t_n]} \xi(t)$ for all $n\in \N$. Using all these properties we obtain, for $n$ large enough that
\[
    \xi(t_n) \geq \delta \xi(t_n) \int_0^{t_n} \beta(a) e^{-\int_0^a \mu(l) \dd l} \dd a,
\]
which leads us to the contradiction $\delta \leq 1$ after passing to the limit as $n\to \infty$ because of the normalization hypothesis \eqref{eq:norm_2}. This concludes the proof of the theorem.
\end{proof}

The latter result reveals that for the case we are considering, the set of initial data that produces a solution not converging to any of the stable equilibria is a nowhere dense set. We emphasize that the proof mainly relies on the structure of having exactly three equilibria for the semiflow. Nevertheless, what can be expected is that if the number of equilibria increases, then we may consider the set of initial data whose solutions do not converge to any of the stable ones, and therefore inferring this set is totally unordered as well.

From this result can be deduced a sharp threshold property for the system \eqref{delay_sys} as was done in the last paragraph of Section \ref{sec:threshold}. Let us summarize this result in the next assertion.

\begin{theorem}\label{th:threshold_2}
Let $\{ (\alpha_\lambda, \varphi_\lambda) \}_{\lambda\geq 0}$ be a family of initial functions in $Y$ such that the map $\lambda \to (\alpha_\lambda, \varphi_\lambda)$ is continuous from $\R_+$ into $Y$ and strictly monotone, i.e., if $\lambda <\eta$ then $(\alpha_\lambda, \varphi_\lambda) < (\alpha_\eta, \varphi_\eta)$. Assume as well that $(\alpha,\varphi_0) = (0,0)$, and denote by $(I^\lambda(t),b^\lambda(t))$ the associated solution of the system \eqref{delay_sys}. Then, every solution $I^\lambda$ and $b^\lambda$ vanishes at infinity, or there exists exactly one $\lambda^*>0$ such that
\begin{itemize}
    \item for all $\lambda < \lambda^*$, $(I^\lambda(t), b^\lambda(t)) \to (0,0)$ ad $t\to \infty$ is valid;
    \item and for all $\lambda > \lambda^*$, $(I^\lambda(t), b^\lambda(t)) \to \left( \frac{c_0}{\mu_\infty} \kappa_2, \kappa_2 \right)$ as $t\to \infty$ is valid.
\end{itemize}
Furthermore, in the latter case, there exists $\delta>0$ with
\[
    \delta \leq \frac{\mu_\infty}{c_0} \liminf_{t\to \infty} I^{\lambda^*}(t) \leq \frac{\mu_\infty}{c_0} \limsup_{t\to \infty} I^{\lambda^*}(t) \leq \kappa_2 - \delta,
\]
and
\[    
    \delta \leq \liminf_{t\to \infty} b^{\lambda^*}(t) \leq \limsup_{t\to \infty} b^{\lambda^*}(t) \leq \kappa_2- \delta.
\]
\end{theorem}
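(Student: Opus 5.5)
The plan is to transplant the argument of Theorem \ref{th:threshold} to the semiflow $\Psi$ on $Y$. We split the index set into
\[
M_0=\{\lambda\geq 0 \colon (\alpha_\lambda,\varphi_\lambda)\in A_0\},\qquad M_{\kappa_2}=\{\lambda\geq 0 \colon (\alpha_\lambda,\varphi_\lambda)\in A_{\kappa_2}\},\qquad M_\sigma=\R_+\setminus(M_0\cup M_{\kappa_2}).
\]

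\textbf{Structure of $M_0$ and $M_{\kappa_2}$.} Both sets are open in $\R_+$. This follows from continuity of $\lambda\mapsto(\alpha_\lambda,\varphi_\lambda)$ and openness of $A_0$ and $A_{\kappa_2}$, which in turn comes from continuity of $\Psi$ and local stability of the equilibria (Proposition \ref{comparison}). The set $M_0$ contains $0$, since $(\alpha_0,\varphi_0)=(0,0)$ is an equilibrium.

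Next I would show that $M_0$ is an interval. Take $\lambda\in M_0$ and $\eta<\lambda$. The comparison principle gives $(0,0)\leq\Psi(t,(\alpha_\eta,\varphi_\eta))\leq\Psi(t,(\alpha_\lambda,\varphi_\lambda))\to(0,0)$, so $\eta\in M_0$. Hence $M_0=[0,\lambda_*)$ with $\lambda_*\in(0,+\infty]$. Symmetrically, monotonicity shows $M_{\kappa_2}$ is upward closed. For the upper bound I would use the limsup bound of Theorem \ref{conv_delay} together with $b^*\leq\kappa_2$ and the comparison principle. So $M_{\kappa_2}$ is either empty or equals $(\lambda^*,+\infty)$ with $\lambda^*\geq\lambda_*$.

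\textbf{Sharp threshold.} Points $\lambda\in M_\sigma$ satisfy $(\alpha_\lambda,\varphi_\lambda)\in Y\setminus(A_0\cup A_{\kappa_2})$. The preceding proposition says this set is totally unordered, while strict monotonicity of the family makes any two distinct indices give strictly ordered elements. Therefore $M_\sigma$ contains at most one point.

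If $\lambda_*=+\infty$, every solution vanishes at infinity. Otherwise $\lambda_*\notin M_0$, because $M_0$ is open. Also $\lambda_*\notin M_{\kappa_2}$: that set is open, and a neighbourhood of $\lambda_*$ meets $M_0$, which is disjoint from $M_{\kappa_2}$. So $\lambda_*\in M_\sigma$. Since $M_\sigma$ is a singleton and $\R_+=M_0\cup M_\sigma\cup M_{\kappa_2}$, we get $M_{\kappa_2}=(\lambda_*,+\infty)$, and we set $\lambda^*=\lambda_*$.

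\textbf{Quantitative bounds at $\lambda^*$.} Let $S'=Y\setminus(A_0\cup A_{\kappa_2})$. It is closed, being the complement of two open sets, and it is forward invariant, since basins of attraction are invariant. Therefore $\omega((\alpha_{\lambda^*},\varphi_{\lambda^*}))\subseteq\AA\cap S'$, where $\AA$ is the compact attractor from Proposition \ref{compac_delay}. The omega limit set is nonempty because the orbit is eventually precompact. The set $\AA\cap S'$ is compact and contains neither $(0,0)$ nor $\bigl(\tfrac{c_0}{\mu_\infty}\kappa_2,\kappa_2\bigr)$.

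\textbf{Main obstacle.} The delicate step is converting "the omega limit set avoids the two stable equilibria" into uniform pointwise bounds on $b$ and $I$. Compactness alone yields only a positive distance in the norm of $\R_+\times C^0$, not pointwise bounds. My first attempt is an order argument via local stability. Suppose $\liminf b^{\lambda^*}=0$ along times $t_n$. Then the limit point $(\bar I,\bar\varphi)\in\AA\cap S'$ has $\bar\varphi(0)=0$. By the definition of $Y$ and strict monotonicity of $f$ with $f(x)=0$ only at $x=0$, this forces $\bar\varphi\equiv 0$ on the support of $\beta$ and $\bar I=0$. I would then need to upgrade this to the whole point being $(0,0)$, using invariance of $\AA\cap S'$ (full orbits) and the eventual strong monotonicity of $\Psi$: a nonzero entire orbit passing through $\varphi(0)=0$ should be impossible. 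Alternatively, one can bound $b$ from above using $I$ and the local stability basin of $(0,0)$ under the order $\leq$.

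The upper bound $\kappa_2-\delta$ and the two bounds for $I$ are handled analogously. For $I$, the bracketing $[I_*,I^*]\subseteq\bigl[\tfrac{c_0}{\mu_\infty}b_*,\tfrac{c_0}{\mu_\infty}b^*\bigr]$ from Theorem \ref{conv_delay} transfers the bounds on $b$ directly.
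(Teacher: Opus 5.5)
Your proposal is correct and follows the paper's route: the paper simply transplants the proof of Theorem \ref{th:threshold} (open intervals $M_0$, $M_{\kappa_2}$, total unorderedness of $Y\setminus(A_0\cup A_{\kappa_2})$, and $\omega\subseteq\AA\cap S'$), and you are more careful than the paper, which passes from compactness of $\AA\cap S'$ to the pointwise bounds without comment. The obstacle you flag is genuine and your full-orbit idea closes it: if $(\bar I,\bar\varphi)\in\omega$ has $\bar\varphi(0)=0$ then $\bar I=0$, and along a backward orbit in $\omega$ the identity $I(0)=e^{-\mu_\infty s}I(-s)+c_0\int_{-s}^{0}e^{\mu_\infty u}b(u-a_0)\,\dd u$ forces $(I(-a_0),b_{-a_0})=(0,0)$, hence $(\bar I,\bar\varphi)=(0,0)$; the case $\bar\varphi(0)=\kappa_2$ works the same way once you note that every point of $\omega$ lies below $(\frac{c_0}{\mu_\infty}\kappa_2,\kappa_2)$ by Theorem \ref{conv_delay} and use \eqref{eq:norm_2} to get $\bar I=\frac{c_0}{\mu_\infty}\kappa_2$.
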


\subsection{Return to the age-structured model}

In this last paragraph we make explicit the relation between the age-structured model \eqref{GM-model} and the system \eqref{delay_sys}. The deduction of the latter one was done by some formal computations made in the Gurtin-MacCamy model, as the differentiation under the integral sign or even assuming that the solution given by the method of the characteristics is a classical solution of the model, that is, a solution that is indeed differentiable in both variables. Notwithstanding, the relation between these two formulations is easy to state, departing from the representation formulas given for the solutions and functions involved. Recall that the solution of the age-structured model given by the method of the characteristics is
\[
    u(t,a) = \begin{cases}
        e^{-\int_{a-t}^a \mu(l) \dd l} u_0(a-t), & a\geq t, \\
        e^{-\int_0^a \mu(l) \dd l} b(t-a), & t\geq a,
    \end{cases}
\]
where $b\colon \R_+ \to \R$ is the solution to the following integral equation
\[
    b(t) = f\left( \int_t^{+\infty} \beta(a)e^{-\int_{a-t}^a \mu(l) \dd l} u_0(a-t) \dd a + \int_0^t \beta(a)e^{-\int_0^a \mu(l) \dd l} b(t-a) \dd a \right).
\]
Let $I(t) = \int_{a_0}^{+\infty} u(t,a) \dd a$. Then, for $t\geq a_0$ we obtain
\begin{align}
    I(t) &= \int_t^{+\infty} e^{-\int_{a-t}^a \mu(l) \dd l} u_0(a-t) \dd a + \int_{a_0}^t e^{-\int_0^a\mu(l) \dd l} b(t-a) \dd a \label{expr_I} \\
    &= \int_0^{+\infty} e^{-\int_a^{a+t} \mu(l) \dd l} u_0(a) \dd a + e^{-\int_0^{a_0} \mu(l) \dd l} \int_{a_0}^t e^{-\mu_\infty (a-a_0)} b(t-a) \dd a \nonumber \\
    &= \int_0^{a_0} e^{-\int_a^{a_0} \mu(l) \dd l} e^{-\mu_\infty(t+a-a_0)} u_0(a) \dd a + e^{-\mu_\infty t} \int_{a_0}^{+\infty} u_0(a) \dd a \nonumber \\
    &\qquad + c_0 \int_0^{t-a_0} e^{-\mu_\infty (t-a-a_0)}b(a) \dd a, \nonumber
\end{align}
from where it follows that
\begin{align*}
    e^{\mu_\infty t} I(t) &= \int_0^{a_0} e^{-\int_a^{2a_0 - a} \mu(l) \dd l} u_0(a) \dd a + \int_{a_0}^{+\infty} u_0(a) \dd a + c_0 \int_0^{t-a_0} e^{\mu_\infty(a+a_0)} b(a) \dd a.
\end{align*}
The above expression implies that $I$ is a solution of the differential equation
\[
    I'(t) = -\mu_\infty I(t) + e^{-\int_0^{a_0} \mu(l) \dd l} b(t-a_0), \quad t \geq a_0,
\]
with $I(a_0) = \int_0^{a_0} e^{-\int_a^{2a_0 - a} \mu(l) \dd l} u_0(a) \dd a + \int_{a_0}^{+\infty} u_0(a) \dd a$. Also, using \eqref{expr_I} we deduce for $t\geq a_0$ as well
\begin{align*}
    b(t) &= f\Bigg( \int_0^{a_0} \beta(a)e^{-\int_0^a \mu(l) \dd l} b(t-a) \dd a +  \beta_\infty \int_{a_0}^t e^{-\int_0^a \mu(l) \dd l} b(t-a) \dd a  \\
    &\qquad + \beta_\infty \int_t^{+\infty} e^{-\int_{a-t}^a \mu(l) \dd l} u_0(a-t) \dd a \Bigg) \\
    &= f\left( \int_0^{a_0} \beta(a)e^{-\int_0^a \mu(l) \dd l} b(t-a) \dd a + \beta_\infty I(t) \right).
\end{align*}

Therefore, we have proved that every solution of the age-structured model yields a solution of the system \eqref{delay_sys} for $t\geq a_0$, and therefore the dynamics of the Gurtin-MacCamy model relies on that of the delayed system. From this remark, the proof of Theorem \ref{th:main_2} follows the same reasoning as that one of Theorem \ref{th:main} using Theorem \ref{th:threshold_2}.

\section*{Acknowledgments}

F.H. acknowledges the support of ANID-Subdirección de Capital Humano/Doctorado Nacional/2024-21240616 and the support of the Région Normandie through the RIN50 project “DAMES”. We would like to express our gratitude to S. Trofimchuk for valuable discussions and for calling authors' attention to the work \cite{MR1822211}.

\vspace{5mm}

\noindent {\bf Data Availability} No datasets were generated or analysed during the current study.

\section*{Declarations}

{\bf Competing interests} The authors declare no competing interests.

\bibliography{biblio.bib}
\bibliographystyle{abbrv}

\appendix

\section{The strongly order-preserving property}

Let us consider the semiflow generated by the Gurtin-MacCamy model \eqref{GM-model}, $U(t)\colon \L^1_+(0,+\infty) \to \L^1_+(0,+\infty)$, which is given by the method of the characteristics; i.e., $U(t)u_0 = u(t,\cdot)$, where
\[
    u(t,a) = \begin{cases}
        e^{-\int_{a-t}^a \mu(l) \dd l} u_0(a-t), & a\geq t, \\
        e^{-\int_0^a \mu(l) \dd l} b(t-a), & a\leq t,
    \end{cases}
\]
with $b\colon \R_+ \to \R_+$ being the unique continuous solution of the integral equation
\[
     b(t) = f\left( \int_t^{+\infty} \beta(a) e^{-\int_{a-t}^a \mu(l) \dd l} u_0(a-t) \dd a + \int_0^t \beta(a)e^{-\int_0^a \mu(l) \dd l} b(t-a) \dd a \right).
\]
The proof of Theorem \ref{th:main} relies on the sharp threshold property for the semiflow generated by the solutions of the integral equation for $b$, that is, for the semiflow on a subset of $C_+^0([-a^*,0])$ as $(t,\varphi)\mapsto b_t^\varphi$, where $b_t$ denotes the history of $b$. In turn, this property is immediately deduced from the unorderedness of the separatrix, which follows from the generic asymptotic behavior for strongly monotone dynamical systems \cite{MR921986}. This description of the long-term dynamics can also be extended for semiflows defined over an ordered Banach space whose order is given by a non-solid cone, i.e., a cone with an empty interior, using the notion of strongly order-preserving semiflow \cite{MR1319817,MR1112067}. A semiflow $\Phi$ is called \emph{strongly order-preserving} if it is monotone and whenever $x<y$ there exist open subsets $U,V\subseteq X$ and $t_0>0$ such that $x\in U$, $y\in V$ and $\Phi(t_0,U) \leq \Phi(t_0,V)$.

Thus, Proposition \ref{compact_semifl} shows that the semiflow $(t,\varphi)\mapsto b_t^\varphi$ is eventually strongly monotone when $a^*<+\infty$, and therefore is strongly order-preserving. In contrast, the semiflow generated by the Gurtin-MacCamy model in $\L^1_+(0,+\infty)$ is never strongly order-preserving. In fact, consider any two initial distributions $u_0,v_0\in \L^1_+(0,+\infty)\cap \L^\infty_+(0,+\infty)$ such that $u_0 < v_0$, and denote by $\tilde{u}$ and $v$ the associated solutions to the age-structured model. In addition, let $U,V\subseteq \L^1_+(0,+\infty)$ be two open neighborhoods of $u_0$ and $v_0$, respectively. Now, set $\eta,\tilde{u}_0\in \L^1_+(0,+\infty)$ defined as $\eta = \frac{1}{\varepsilon} \1_{(0,\varepsilon^2)}$ and $\tilde{u}_0 = u_0 + \eta$, with $\varepsilon>0$. Taking $\varepsilon$ small enough, we assure that $\tilde{u}_0 \in U$ and $\tilde{u}_0(a) > v_0(a)$ a.e. in $(0,\varepsilon^2)$. Then, for each $t\geq 0$ it follows that
\[
    \tilde{u}(t,a) = e^{-\int_{a-t}^a \mu(l) \dd l} \tilde{u}_0(a-t) > e^{-\int_{a-t}^a \mu(l) \dd l} v_0(a-t)= v(t,a), \quad \text{a.e.}~ a\in(t,t+\varepsilon^2),
\]
and
\[
    \tilde{u}(t,a) = e^{-\int_{a-t}^a \mu(l) \dd l} u_0(a-t) \leq e^{-\int_{a-t}^a \mu(l) \dd l} v_0(a-t) = v(t,a), \quad \text{a.e.}~ a\in(t+\varepsilon^2,+\infty).
\]
This implies that $\Phi(t,\tilde{u}_0) = \tilde{u}(t,\cdot) \not \leq v(t,\cdot) = \Phi(t,v_0)$ for all $t\geq 0$, and hence the proof of the claim is finished.

On the other hand, when $a^*=+\infty$, the solution to the Volterra equation is not completely determined by the history over a compact interval, what naturally leads us to consider the space $BUC_+^0((-\infty,0])$ of bounded and uniformly continuous functions on the left half-axis. Nevertheless, the semiflow $(t,\varphi)\mapsto b_t^\varphi$ defined on a certain subspace $Y$ of $BUC_+^0((-\infty,0])$, mainly to avoid the discontinuity problem at zero, fails to be strongly order-preserving. Indeed, let us consider two initial data $\varphi,\psi\in Y$ such that $\varphi < \psi$ and $\varphi \equiv \psi$ on $(-\infty,\theta_0]$, for some $\theta_0<0$. Let $U,V\subseteq Y$ be two open neighborhoods of $\varphi$ and $\psi$, respectively. Consider a smooth function $\eta\colon (-\infty,0]\to \R$ such that $0\leq \eta\leq \varepsilon$ and
\[
    \eta(\theta) = \begin{cases}
        \varepsilon, & \theta\leq \theta_0 - 1, \\
        0, & \theta\in [\theta_0,0],
    \end{cases}
\]
with $\varepsilon>0$, and define $\tilde{\varphi} = \varphi + \eta$. Then, taking $\varepsilon$ small enough, we assure that $\tilde{\varphi}\in U$ and clearly $\tilde{\varphi} \not\leq \psi$. As the semiflow is defined in terms of the history of the solution over the whole left half-axis, the relation $b_t^{\tilde{\varphi}} \not\leq b_t^\psi$ holds for all $t\geq 0$, showing that the semiflow is not strongly order-preserving. Therefore, it is not possible to apply the theory of monotone dynamical systems using this particular formulation of the problem.

This problem is handled in Section \ref{sec:non_compact} by uncoupling the dynamics in an integro-differential system where one variable stores the information of the population for large ages, and therefore it defines a semiflow over a space of continuous functions defined on a compact interval. The generated smiflow turns out to be strongly order-preserving, and therefore the generic behavior of its orbits is known.
\end{document}